%%
%% The following is commands are used for importing various types of
%% grapics.
%% 
%%
%% This is LaTeX2e input.
%%
%%
%% The following is commands are used for importing various types of
%% grapics.
%% 
%%
%% This is LaTeX2e input.
%%

%% The following tells LaTeX that we are using the 
%% style file amsart.cls (That is the AMS article style
%%
%%\documentclass{amsart}

%% This has a default type size 10pt.  Other options are 11pt and 12pt
%% This are set by replacing the command above by
%% \documentclass[11pt]{amsart}
%%
%% or
%%
%\documentclass[12pt]{amsart} \renewcommand{\baselinestretch}{1.5}  \usepackage[margin=1.in]{geometry}
%%
%reqno
%\documentclass[11pt]{amsart} \usepackage[left=0.7in, right=0.7in, top=1in, bottom=1in, includefoot]{geometry}
\documentclass{amsart} 
\usepackage[left=1.3in, right=1.2in, top=1in, bottom=1in, includefoot]{geometry}

\usepackage{pgfplots}
\pgfplotsset{compat=1.16}
\usepackage{adjustbox}

\usepackage{soul,xcolor}
\usepackage{bbm}

\usepackage{dsfont}

%\usepackage{refcheck}
%\usepackage{mathtools}
%\mathtoolsset{showonlyrefs}

\usepackage{amsmath}
\usepackage{amsfonts}
\usepackage{amssymb}
%%
%% Some mathematical symbols are not included in the basic LaTeX
%% package.  Uncommenting the following makes more commands
%% available. 
%%

\usepackage{amssymb}
\usepackage{enumitem}

\usepackage[nocompress]{cite}

%\usepackage{epsfig}  		% For postscript
%\usepackage{epic,eepic}       % For epic and eepic output from xfig

%%
%% The following is very useful in keeping track of labels while
%% writing.  The variant   \usepackage[notcite]{showkeys}
%% does not show the labels on the \cite commands.
%% 

%\usepackage{showkeys}
\usepackage{hyperref}

\hypersetup{
	colorlinks   = true, %Colours links instead of ugly boxes
	urlcolor     = blue, %Colour for external hyperlinks
	linkcolor    = blue, %Colour of internal links
	citecolor   = red %Colour of citations
}

%\DeclareUnicodeCharacter{00A0}{~}

\usepackage{caption}
\usepackage{todonotes}

%%%%
%%%% The next few commands set up the theorem type environments.
%%%% Here they are set up to be numbered section.number, but this can
%%%% be changed.
%%%%

\newtheorem{thm}{Theorem}[section]
\newtheorem{prop}[thm]{Proposition}
\newtheorem{lem}[thm]{Lemma}
\newtheorem{cor}[thm]{Corollary}

%%
%% If some other type is need, say conjectures, then it is constructed
%% by editing and uncommenting the following.
%%

%\newtheorem{conj}[thm]{Conjecture} 

%%% 
%%% The following gives definition type environments (which only differ
%%% from theorem type invironmants in the choices of fonts).  The
%%% numbering is still tied to the theorem counter.
%%% 

\theoremstyle{definition}
\newtheorem{definition}[thm]{Definition}

%%
%% Again more of these can be added by uncommenting and editing the
%% following. 
%%

%\newtheorem{note}[thm]{Note}

\setcounter{secnumdepth}{3}
\setcounter{tocdepth}{1}

\DeclareRobustCommand{\gobblefive}[5]{}
\newcommand*{\SkipTocEntry}{\addtocontents{toc}{\gobblefive}}

%\usepackage{array,booktabs}
%\newcolumntype{C}{>{$\displaystyle}c<{$}}

%%% 
%%% The following gives remark type environments (which only differ
%%% from theorem type invironmants in the choices of fonts).  The
%%% numbering is still tied to the theorem counter.
%%% 

\theoremstyle{remark}

\newtheorem{remark}[thm]{Remark}

%\theoremstyle{claim}

%\newtheorem{claim}{Claim}

%%%
%%% The following, if uncommented, numbers equations within sections.
%%% 

\numberwithin{equation}{section}

%%%
%%% The following show how to make definition (also called macros or
%%% abbreviations).  For example to use get a bold face R for use to
%%% name the real numbers the command is \mathbf{R}.  To save typing we
%%% can abbreviate as

\newcommand{\R}{\mathbb{R}}  % The real numbers.
\newcommand{\N}{\mathbb{N}}  % The natural numbers.

%%
%% The comment after the defintion is not required, but if you are
%% working with someone they will likely thank you for explaining your
%% definition.  
%%
%% Now add you own definitions:
%%

%%%
%%% Mathematical operators (things like sin and cos which are used as
%%% functions and have slightly different spacing when typeset than
%%% variables are defined as follows:
%%%

 % The distance.

\usepackage{float}
\usepackage{amsmath}
\usepackage{amssymb} 
\usepackage{mathtools}
\usepackage{mathabx}

%%
%% This is the end of the preamble.
%% 

\newcommand\norm[1]{\lVert#1\rVert}
\renewcommand\d{\:\mathrm{d}}

\newcommand{\scalarprod}[2]{\big({#1},{#2}\big)}
\newcommand{\jbrack}[1]{\langle{#1}\rangle}

\newcommand{\Abs}[1]{\left\vert #1 \right\vert}
\newcommand{\abs}[1]{\vert #1 \vert}

%\newcounter{mycounter} % create a new counter, called 'mycounter'
% default def'n of '\themycounter' is '\arabic{mycounter}'

%% command to increment 'mycounter' by 1 and to display its value:
%\newcommand\showmycounter{\stepcounter{mycounter}\themycounter}
\newcounter{rtaskno}

\newcounter{rsubtaskno}

%\DeclareRobustCommand{\rtask}[1]{%
	%	\refstepcounter{rtaskno}%
	%	\thertaskno\label{#1}}

%\setcounter{secnumdepth}{3}
%\setcounter{tocdepth}{1}

%\DeclareRobustCommand{\gobblefive}[5]{}
%\newcommand*{\SkipTocEntry}{\addtocontents{toc}{\gobblefive}}

\usepackage{scrextend}

\usepackage{xcolor}
\definecolor{green}{rgb}{0.01, 0.75, 0.24}
%\definecolor{aqua}{rgb}{0.13, 0.67, 0.8}
%\definecolor{aqua}{rgb}{0.19, 0.55, 0.91}
\definecolor{aqua}{rgb}{0.0, 0.44, 1.0}

\usepackage[scr=boondox]  % heavily sloped
%            cal=esstix]   % slightly sloped
           {mathalpha}

%\usepackage[running]{lineno}
%\linenumbers
%\newcommand*\patchAmsMathEnvironmentForLineno[1]{
%\expandafter\let\csname old#1\expandafter\endcsname\csname #1\endcsname
%\expandafter\let\csname oldend#1\expandafter\endcsname\csname end#1\endcsname
%\renewenvironment{#1}
%{\linenomath\csname old#1\endcsname}{\csname oldend#1\endcsname\endlinenomath}}
%\newcommand*\patchBothAmsMathEnvironmentsForLineno[1]{
%\patchAmsMathEnvironmentForLineno{#1}
%\patchAmsMathEnvironmentForLineno{#1*}}
%\AtBeginDocument{
%\patchBothAmsMathEnvironmentsForLineno{equation}
%\patchBothAmsMathEnvironmentsForLineno{align}
%\patchBothAmsMathEnvironmentsForLineno{flalign}
%\patchBothAmsMathEnvironmentsForLineno{alignat}
%\patchBothAmsMathEnvironmentsForLineno{gather}
%\patchBothAmsMathEnvironmentsForLineno{multline}
%}

%\usepackage{refcheck}

%\usepackage{setspace} \doublespacing
%\usepackage{scalefnt}

\begin{document}
	%%
	%% The title of the paper goes here.  Edit to your title.
	%%

	%Improved existence time for
	\title[The Whitham equation and a Whitham-Boussinesq system]{Improved existence time for the Whitham equation and a Whitham-Boussinesq system} 

%%
%% Now edit the following to give your name and address:
%% 

\author[D. Pilod]{Didier Pilod}
\address{Department of Mathematics\\ University of Bergen\\ Postbox 7800\\ 5020 Bergen\\ Norway}
\email{Didier.Pilod@uib.no}

\author[S. Selberg]{Sigmund Selberg}
\address{Department of Mathematics\\ University of Bergen\\ Postbox 7800\\ 5020 Bergen\\ Norway}
\email{Sigmund.Selberg@uib.no}

\author[N. S. Taki]{Nadia Skoglund Taki}
\address{Department of Mathematics\\ University of Bergen\\ Postbox 7800\\ 5020 Bergen\\ Norway}
\email{Nadia.Taki@uib.no}

%%
%% If there is another author uncomment and edit the following.
%%

\author[A. Tesfahun]{Achenef Tesfahun}
\address{Department of Mathematics\\ Nazarbayev University\\ Qabanbai Batyr Avenue 53\\ 010000 Nur-Sultan\\ Republic of Kazakhstan}
\email{Achenef.Tesfahun@nu.edu.kz}

%%
%% If there are three of more authors they are added in the obvious
%% way. 
%%

%%%
%%% The following is for the abstract.  The abstract is optional and
%%% if not used just delete, or comment out, the following.
%%%

\keywords{Whitham equation; Whitham-Boussinesq system; Enhanced lifespan; Strichartz estimates}
\subjclass[2020]{Primary: 35Q35, 35A01, 76B15; Secondary: 76B03}
\date{\today}

\begin{abstract}
In this paper, we investigate the time of existence of the solutions to two full dispersion models derived from the water waves equations in the shallow water regime: the Whitham equation and a Whitham-Boussinesq system in dimension one and two. The regime is characterized by the nonlinearity parameter $\epsilon\in(0,1]$ and the shallow water parameter $\mu\in(0,1]$. 

We extend the lifespan of the solution beyond the hyperbolic time $\epsilon^{-1}$. More precisely, we establish well-posedness on the timescale of order $\mu^{\frac{1}{4}^-}\epsilon^{(-\frac{5}{4})^+}$ in the one-dimensional case, and of order $\mu^{\frac{1}{4}^-}\epsilon^{(-\frac{3}{2})^+}$ in dimension two. We emphasize that for the two-dimensional case, we obtain a time of existence of order $\epsilon^{-\frac{5}{4}}$ in the long wave regime $\mu \sim \epsilon$. This kind of result seems to be new, even for the Boussinesq systems. 

The proofs combine energy methods with Strichartz estimates. Here, a key ingredient is to obtain new refined Strichartz estimates that include the small parameter $\mu$. These techniques are robust and could be adapted to improve the lifespan of solutions for other equations and systems of the same form.
\end{abstract}

%%
%%  LaTeX will not make the title for the paper unless told to do so.
%%  This is done by uncommenting the following.
%%

\maketitle
%\pagestyle{plain}

%%%%%%%%%%%%%%%%%%%%%%%%%%%%%%%%%%%%%%%%%%%%%%%%%%%%%%%%%%%%%%%%%%%%%%
\section{Introduction}\label{sec:intro}

\subsection{General setting}
In general, nonlinear dispersive equations and systems are not derived from first principles. Instead, they arise as asymptotic models obtained from more general systems under a suitable scaling as small parameters tend to zero. These asymptotic models are intended to capture the dynamics of the original system within the regime defined by these scaling assumptions. In this article, we focus on general models of the form 
\begin{equation} \label{general:model}
\partial_tU+L_\mu U+\epsilon F(U,\nabla U)=0 ,
\end{equation}
where  $\epsilon$ and $\mu$ are small parameters taking into account the nonlinear and dispersive effects, respectively. Here, $L_\mu$ is a linear dispersive operator and $F(U,\nabla U)$ is a quadratic nonlinearity. 

By using hyperbolic techniques, the time of existence of such systems is typically of order $1/\epsilon$.  The main objective of this paper is to improve the lifespan with respect to the small parameters $\epsilon$ and $\mu$ by exploiting the dispersive effects of the model. 

A typical example of \eqref{general:model} is the {\it abcd} Boussinesq systems modelling surface water waves given by 
 \begin{equation}
    \label{abcd}
    \left\lbrace
    \begin{array}{l}
    \eta_t+\nabla \cdot {\bf v}+\epsilon \nabla\cdot(\eta {\bf v})+ \mu [a\nabla\cdot \Delta{\bf v}-b  \Delta \eta_t]=0 \\
    {\bf v}_t+\nabla \eta+ \frac{\epsilon}{2}\nabla (|{\bf v}|^2)+\mu [c\nabla \Delta \eta-d \Delta {\bf v}_t]=0.
    \end{array}\right.
    \end{equation}
Here, $\eta$ measures the deviation of the free surface from its equilibrium state and ${\bf v}$ is an approximation of the horizontal velocity taken at a prescribed depth. The constants $a$, $b$, $c$, and $d$ are modelling parameters satisfying the constraint $a+b+c+d=\frac{1}{3}$. 
 The regime of an asymptotic model is characterized by the nonlinearity parameter $\epsilon$ and the shallow water parameter $\mu$ defined by
\begin{align*}
     \epsilon=\frac{\mathtt{a}}{H} \ \ \ \  \  \text{ and }  \ \ \ \ \ \mu = \frac{H^2}{\lambda^2} ,
\end{align*}
where $\mathtt{a}$ is the typical amplitude of the wave, $H$ is the still water depth, and $\lambda$ is the typical wavelength. We will always assume that we are in the \emph{shallow water} regime\footnote{Strictly speaking, one should assume $0<\mu \le \mu_0$ for some $0<\mu_0 <1$. However, our analysis remains valid under the condition $0<\mu \le 1$.} 
\[ \mathcal{R}_{\mathrm{SW}} := \left\{ (\epsilon,\mu) :  0< \epsilon \le 1, \, 0<\mu \le 1 \right\} .\]
Moreover, an important sub-regime of $\mathcal{R}_{\mathrm{SW}}$ is the \emph{long wave} regime 
\[ \mathcal{R}_{\mathrm{LW}} := \left\{ (\epsilon,\mu) :   0<\mu \le 1, \, \epsilon \sim \mu  \right\} .\]

\begin{figure}[H]
    \centering
    \begin{tikzpicture}
\begin{axis}[x=1cm, y=1cm,
axis lines=middle,
axis x line shift=1.7, 
xlabel=\(x\),ylabel=\(z\),
x label style={anchor=north},
y label style={anchor=east},
   xticklabel=\empty,
xmin=0,xmax=12,ymin=-1.7,ymax=2.4,
xtick={},
clip=false,
domain=0:12, 
smooth,
ticks=none
]

%\addplot[black!80,densely dashed]           {0}     node[right]{};

\draw [black!80,densely dashed] (0.2,0) -- (12,0);
\draw [black!80,densely dashed] (0,0) -- (0.2,0);

\draw [aqua, very thick,  tension=0.6] plot [smooth] coordinates {(0,0)  (2,-0.2) (5,0.7) (7,-0.3)   (9.5,0.4)  (12,0)}  ;

\node[label=311:{$\epsilon\eta$}] (n2) at (2.5,0.9) {};
%\node[label=311:{$0$}] (n2) at (-0.5,0.4) {};
\node[label=311:{$-1$}] (n2) at (-0.75,-1.3) {};

%\addplot[black!80,densely dashed]    {-0.33} node[right]{};
%\addplot[black!80,densely dashed]    {0.7}   node[right]{};

\node[label=311:{$\mathtt{a}$}] (n2) at (4.85,0.6) {};
\draw [|-|] (5,-0.007) -- (5,0.72);

\node[label=311:{$\lambda$}] (n2) at (6.9,1.5) {};
\draw [|-|] (5,0.86) -- (9.5,0.86);

\node[label=311:{$H$}] (n2) at (9.35,-0.36) {};
\draw [|-|] (9.5,0) -- (9.5,-1.7);

\end{axis}
    \end{tikzpicture}
    \caption{The (blue) line illustrates the surface elevation $z= \epsilon\eta$ with a constant depth $z=-1$.}
    %\label{fig:enter-label}
\end{figure}
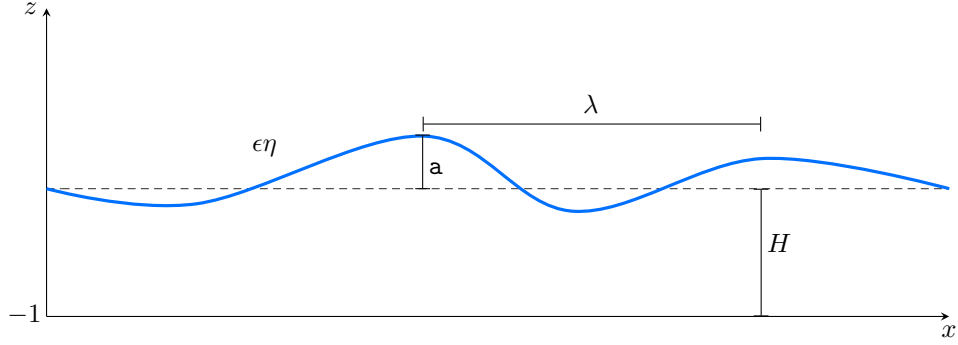
The Boussinesq systems \eqref{abcd} were derived from the full water waves system in the long wave regime with a precision  $\mathcal{O}(\epsilon^2 t)$ for $0\leq t\lesssim\epsilon^{-1}$ (see \cite{Bona2002,Bona2005,Lannes2013}).  Assuming further a unidirectional propagation of waves, one recovers the celebrated Korteweg-de Vries (KdV) and Benjamin-Bona-Mahony (BBM) equations (see \cite{Boussinesq1872,Korteweg1895} and \cite{Benjamin1972}). 

Comparing the KdV equation to the full water waves equations, the dispersion relation is too strong in high frequencies. As a consequence, it fails to capture important physical phenomena such as wave breaking and peaking waves. As an alternative to KdV, Whitham \cite{Whitham1967} proposed  the equation 
\begin{align}\label{eq:Whitham}
     \partial_t \eta + \sqrt{\mathcal{T}_\mu}(D) \partial_x \eta + \frac{\epsilon}{2} \partial_x (\eta^2) =0,
\end{align}
where the operator $ \sqrt{\mathcal{T}_\mu}(D)$ is the square root of the the Fourier multiplier $\mathcal{T}_\mu(D)$ defined in frequency by 
\begin{align}\label{eq:def_T_mu_xi}
    \mathcal{T}_\mu(\xi) = \frac{\tanh(\sqrt{\mu}|\xi|)}{\sqrt{\mu}|\xi|}.
\end{align}
Whitham introduced the equation formally, without the small parameters $\epsilon$ and $\mu$, by replacing the dispersion of KdV with the exact dispersion of the linearized water waves equations at a constant depth. The existence of shocks and Stokes waves, originally conjectured by Whitham, has been rigorously proved for \eqref{eq:Whitham}, respectively, in \cite{Hur2017,Saut2022}  and \cite{Ehrnstrom_Wahlen2019,Truong2022} (see also \cite{Hur2015} for a proof of the modulational instability of periodic waves). The equation with the parameters $\epsilon$ and $\mu$ was first introduced in \cite{Lannes2013}. In \cite{Klein2018}, the solutions of \eqref{eq:Whitham} were proven to stay close to those of KdV on the timescale $1/\epsilon$ when $(\epsilon,\mu) \in \mathcal{R}_{\mathrm{LW}}$. Later on, Emerald \cite{Emerald2021} proved the error estimate between the Whitham equation and the water waves system to have precision $\mathcal{O}(\mu\epsilon t)$ for $0\leq t\lesssim\epsilon^{-1}$ in the shallow water regime $ \mathcal{R}_{\mathrm{SW}}$. However, this is under the assumption of well-prepared initial conditions. In the general case, for bi-propagating waves, the precision is $\mathcal{O}((\mu\epsilon + \epsilon^2)t)$.

A similar procedure to that of Whitham can be applied to the Boussinesq systems \eqref{abcd}, leading to various Whitham-Boussinesq-type systems (also referred to as full-dispersion Boussinesq systems). In this paper, we focus on the one proposed by Hur and Pandey \cite{Hur2019},
\begin{align}\label{eq:whitham_boussinesq}
       \begin{dcases}
            \partial_t \eta  + \partial_x v + \epsilon \partial_x (\eta v)=0\\
        \partial_t v + \mathcal{T}_\mu(D) \partial_x \eta + \frac{\epsilon}{2} \partial_x (v^2) =0  
       \end{dcases}
\end{align}
with $\eta: \R \times \R \rightarrow \R$ and $v : \R \times \R \rightarrow \R$, and on its two-dimensional counterpart
\begin{align}\label{eq:whitham_boussinesq_d2}
       \begin{dcases}
            \partial_t \eta  + \nabla \cdot \mathbf{v} + \epsilon \nabla \cdot (\eta \mathbf{v})=0\\
        \partial_t \mathbf{v} + \mathcal{T}_\mu(D) \nabla \eta + \frac{\epsilon}{2} \nabla (|\mathbf{v}|^2) =\mathbf{0},
       \end{dcases}
\end{align}
where $\eta: \R^2 \times \R \rightarrow \R$ and $\mathbf{v} : \R^2 \times \R \rightarrow \R^2$ is curl-free. 
Here, the unknown variable $\eta$ in \eqref{eq:whitham_boussinesq} and \eqref{eq:whitham_boussinesq_d2} describes the surface elevation and the unknown variable $v$ (respectively,  $\mathbf{v}$ in \eqref{eq:whitham_boussinesq_d2}) is related to the fluid velocity. Other versions of these systems have been studied in, e.g., \cite{Klein2018,Kalisch2019,Paulsen2022,Dinvay2019,Hur2019,Dinvay2020,Tesfahun2024}. Moreover, the rigorous derivations of the Whitham-Boussinesq-type systems from the water waves system were proved  \cite{Emerald2021_siam} (see also \cite{Duchene2021} with bathymetry) with an error $\mathcal{O}(\mu\epsilon t)$ for $0\leq t\lesssim\epsilon^{-1}$, for $(\epsilon,\mu) \in \mathcal{R}_{\mathrm{SW}}$. On the other hand, it was also shown in \cite{Emerald2021_siam} that their strongly dispersive versions \eqref{abcd} have the precision of $\mathcal{O}((\mu\epsilon + \mu^2) t)$ for $0\leq t\lesssim\epsilon^{-1}$. As a consequence, the Whitham–Boussinesq systems provide a better approximation of the water waves system in the weakly nonlinear regime $\epsilon < \mu$. 

For this reason, we wish to investigate whether the lifespan of the solutions of the full-dispersion models \eqref{eq:Whitham}, \eqref{eq:whitham_boussinesq}, and \eqref{eq:whitham_boussinesq_d2} can be improved when the parameters $\mu$ and $\epsilon$ are decoupled. 

\subsection{Former well-posedness results}
The typical lifespan for solutions to a system of the form \eqref{general:model} is the \emph{hyperbolic} time $1/\epsilon$. This can be readily established by using energy methods for unidirectional equations and for systems with symmetric nonlinearity (see \cite{Bona2005}).

As a result, the initial value problem (IVP) associated with the Whitham equation \eqref{eq:Whitham} is well-posed in $H^s(\R)$, for $s>\frac{3}{2}$, on the timescale  $1/\epsilon$ uniformly in $\mu$. In the case $\mu=1$, a simple rescaling shows that this problem is equivalent to considering initial data of size $\epsilon$ and trying to extend the existence time beyond the hyperbolic timescale.   Assuming that  $(\epsilon,\mu)$ are in the \emph{small data} regime:
\[ \mathcal{R}_{\mathrm{SD}} := \left\{ (\epsilon,\mu) :   \mu \sim 1, \, 0<\epsilon \ll 1  \right\} ,\]
 Ehrnstr\"{o}m and Wang \cite{Ehrnstrom2022} obtained well-posedeness for the Whitham equation on the timescale $1/\epsilon^2$ in $H^N(\R)$ with $N\geq 3$ by combining hyperbolic techniques with a normal form transformation in the spirit of Shatah \cite{Shatah1985}. The proof in \cite{Ehrnstrom2022} is an adaptation of the methods introduced by Hunter, Ifrim, Tataru, and Wong \cite{Hunter2015} for the Burgers-Hilbert equation (see also \cite{Hunter2012}).  These results were also extended to the fractional KdV equation in \cite{Ehrnstrom2019}. Additionally, similar techniques have also been employed in more complex settings, notably for the Benjamin-Ono equation  \cite{Ifrim2019} and for the water waves system (see, e.g, \cite{Alazard2015,Hunter2016}).

The analysis becomes more delicate for systems with non-symmetric nonlinearities. One then has to find suitable symmetrizers and carefully track the small parameter in the commutator estimates. This was achieved by Saut and Xu \cite{Saut2012} for the Boussinesq systems \eqref{abcd}, where they obtained well-posedness on a large time of order $1/\epsilon$ in the long wave regime $\mathcal{R}_{\mathrm{LW}}$ (see also \cite{Ming2012,Saut2017,Saut2020_JDE,Burtea2016}). Moreover, in the small data regime $\mathcal{R}_{\mathrm{SD}}$, Saut and Xu \cite{Saut2020} extended the hyperbolic time of existence for a strongly dispersive Boussinesq system to $\epsilon^{-\frac43}$ by using a normal form transformation. 

The IVPs associated with the Whitham-Boussinesq systems  \eqref{eq:whitham_boussinesq} and \eqref{eq:whitham_boussinesq_d2} were proved by Paulsen \cite{Paulsen2022} to be well-posed in the shallow water regime $\mathcal{R}_{\mathrm{SW}}$ on the timescale $1/\epsilon$ for initial data in $H^s(\R^d) \times H^{s+\frac{1}{2}}(\R^d)$ with $s>1+\frac{d}{2}$ for $d=1,2$.\footnote{Actually, the function space has a dependence on $\mu$. See Definition \ref{def:solution_space}.} The proof relies on combining energy methods and symmetrizers in the spirit of \cite{Saut2012}. 

Next, we comment on related semi-linear Whitham-Boussinesq systems introduced by Dinvay, Dutykh, and Kalisch \cite{Dinvay2019_model}, where the nonlinearity in \eqref{eq:whitham_boussinesq} and \eqref{eq:whitham_boussinesq_d2} is relaxed by applying the nonlocal operator $\mathcal{T}_\mu(D)$. The system takes the form
 \begin{align}\label{eq:studied_in_Dinvay_Selberg_Tesfahun_d_1}
       \begin{dcases}
            \partial_t \eta  + \partial_x v + \epsilon  \mathcal{T}_\mu(D)\partial_x (\eta v)=0\\
        \partial_t v + \mathcal{T}_\mu(D) \partial_x \eta + \frac{\epsilon}{2}  \mathcal{T}_\mu(D)\partial_x (v^2) =0 
       \end{dcases}
\end{align}
in dimension one, and
\begin{align}\label{eq:studied_in_Dinvay_Selberg_Tesfahun_d_2}
    \begin{dcases}
            \partial_t \eta  + \nabla \cdot \mathbf{v} + \epsilon \mathcal{T}_\mu(D)\nabla \cdot (\eta \mathbf{v})=0\\
        \partial_t \mathbf{v} + \mathcal{T}_\mu(D) \nabla \eta + \frac{\epsilon}{2} \mathcal{T}_\mu(D)\nabla (|\mathbf{v}|^2) = \mathbf{0} 
       \end{dcases}
\end{align}
in dimension two. In \cite[Theorems 1 and 2]{Dinvay2020}, local-in-time well-posedness of \eqref{eq:studied_in_Dinvay_Selberg_Tesfahun_d_1} and \eqref{eq:studied_in_Dinvay_Selberg_Tesfahun_d_2} was proved at low regularity in the case $\epsilon=\mu=1$. In addition, for $d=1$ a global-in-time existence result was obtained in \cite[Theorem 3]{Dinvay2020} under a smallness condition on the initial data. Meanwhile, in \cite{Tesfahun2024}, the fourth author kept the parameters $0<\mu, \, \epsilon \le 1$ and proved the long-time well-posedness of the system \eqref{eq:studied_in_Dinvay_Selberg_Tesfahun_d_2} on the timescale $\mu^{\frac{3}{2}^-} \epsilon^{(-2)^+}$ in $H^s(\R^2) \times H^s(\R^2)$ for $s>\frac{1}{4}$, recovering the existence time $\epsilon^{(-2)^+}$ when $\mu=1$ without using a normal form method. However, the result provides only a time of existence of order $\epsilon^{-\frac12}$ in the long wave regime $\mathcal{R}_{\mathrm{LW}}$, which coincides with a former result in \cite{Linares2012} for a strongly dispersive Boussinesq system. The results in \cite{Linares2012,Dinvay2020,Tesfahun2024} were proved by combining a fixed point argument with dispersive techniques, which is possible due to the semi-linear nature of the system. 

Finally, the well-posedness of other Whitham-Boussinesq-type systems has been proven on the timescale $1/\epsilon$  in \cite{Paulsen2022,Emerald2022} (see also \cite{Dinvay2019,Dinvay2020_alone,Kalisch2019,Wang2021} for former well-posedness results on timescales of order $1$).

\subsection{Main results}
In this paper, we investigate the time of existence of the solutions to the Whitham equation \eqref{eq:Whitham}  and the Whitham-Boussinesq systems \eqref{eq:whitham_boussinesq} and \eqref{eq:whitham_boussinesq_d2} with dependence on the small parameters $(\epsilon,\mu)$. More precisely,  we establish the well-posedness of \eqref{eq:Whitham} and \eqref{eq:whitham_boussinesq} on the timescale of order $\mu^{\frac{1}{4}^-}\epsilon^{(-\frac{5}{4})^+}$ and of \eqref{eq:whitham_boussinesq_d2} on  the timescale of order $\mu^{\frac{1}{4}^-}\epsilon^{(-\frac{3}{2})^+}$. The proofs consist of compactness methods combined with Strichartz estimates. These techniques are robust and could be adapted to improve the lifespan of solutions for other equations and systems of the form \eqref{general:model}.

The result for the Whitham equation is summarized below. 
\begin{thm}\label{thm:wellposedness_Whitham}
    Let $s > \frac{13}{8}$, $\epsilon,\mu \in (0,1]$, and $\eta_0 \in H^s(\R)$.  Then there exists a positive time
    \begin{align} \label{T_eps_mu}
        T_{\epsilon,\mu}= c^{-1} \epsilon^{-1} \Big(\frac{\mu}{\epsilon}\Big)^{\frac{1}{4}^-} (\norm{\eta_0}_{H^s_x})^{(-\frac{5}{4})^+} 
    \end{align}
    such that \eqref{eq:Whitham} has a unique solution 
    \begin{align*}
       \eta \in C([0,T_{\epsilon,\mu}];H^s(\R)) \cap C^1([0,T_{\epsilon,\mu}];H^{s-1}(\R))
    \end{align*}
    satisfying the initial condition $\eta(0) = \eta_0$. Moreover,
    \begin{align*}
        \sup_{t\in[0,T_{\epsilon,\mu}]} \norm{\eta(t)}_{H^s_x} \lesssim \norm{\eta_0}_{H^s_x}.%,
    \end{align*}
    Further, there exists a neighborhood $\mathcal{U}$ of $\eta_0$ in $H^s(\R)$ such that the flow map
    \begin{align*}
        F_{\epsilon,\mu} : \mathcal{U} \rightarrow C([0,\frac{T_{\epsilon,\mu}}{2}];H^s(\R)), \ \ \ \ \eta_0 \mapsto \eta
    \end{align*}
    is continuous.
\end{thm}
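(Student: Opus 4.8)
The plan is to establish a priori energy estimates for the Whitham equation \eqref{eq:Whitham} on the timescale $T_{\epsilon,\mu}$ by combining a classical energy method with refined Strichartz estimates that keep track of the parameter $\mu$, and then to run the standard Bona--Smith-type argument (parabolic regularization or mollification of the data) to upgrade the a priori bounds to existence, uniqueness, persistence of regularity, and continuity of the flow map. First I would fix $s>\tfrac{13}{8}$, set $\Lambda=\langle D\rangle$, and apply $\Lambda^s$ to the equation, then pair with $\Lambda^s\eta$ in $L^2_x$. The dispersive term $\sqrt{\mathcal T_\mu}(D)\partial_x\eta$ is skew-adjoint, so it drops out; the only term to estimate is the nonlinear contribution $\epsilon\big(\Lambda^s\partial_x(\eta^2),\Lambda^s\eta\big)$. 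After a commutator decomposition (Kato--Ponce), this reduces to controlling $\epsilon\|\partial_x\eta\|_{L^\infty_x}\|\eta\|_{H^s_x}^2$ plus lower-order terms; integrating in time would only give the hyperbolic time $\epsilon^{-1}$ unless one does better on the $L^1_T L^\infty_x$ norm of $\partial_x\eta$.

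The key step — and the main obstacle — is to gain on that $L^1_TL^\infty_x$ quantity using the dispersion of $\sqrt{\mathcal T_\mu}(D)\partial_x$. Here I would invoke the refined $\mu$-dependent Strichartz estimates that the paper announces it will prove: morally, for the linear Whitham group $e^{-t\sqrt{\mathcal T_\mu}(D)\partial_x}$ one expects a local smoothing / Strichartz bound of the form $\|e^{-t\sqrt{\mathcal T_\mu}(D)\partial_x}\Lambda^{\rho}f\|_{L^q_TL^\infty_x}\lesssim \mu^{-\theta}\|f\|_{L^2_x}$ with a loss of derivatives $\rho$ and a power $\mu^{-\theta}$ reflecting the degeneration of the symbol's convexity as $\mu\to0$ (since $\sqrt{\mathcal T_\mu}(\xi)\xi\to\xi$, the phase becomes non-dispersive in the long-wave limit, which is exactly why $\mu^{1/4}$ enters the lifespan). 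Combined with the Duhamel formula for $\eta$, this gives $\|\partial_x\eta\|_{L^1_TL^\infty_x}\lesssim T^{1-1/q}\mu^{-\theta}\big(\|\eta_0\|_{H^s}+\epsilon\|\partial_x(\eta^2)\|_{L^1_TH^{s'}}\big)$, and feeding this back into the energy inequality yields a closed bootstrap that holds as long as $\epsilon\,T^{1-1/q}\mu^{-\theta}\|\eta_0\|_{H^s}^{\alpha}\lesssim 1$, i.e. up to $T\sim \epsilon^{-1}(\mu/\epsilon)^{1/4^-}\|\eta_0\|_{H^s}^{(-5/4)^+}$. The bookkeeping of exponents — matching the Strichartz admissible pair $(q,\infty)$, the derivative loss, and the power of $\mu$ so that the final timescale is precisely $\mu^{1/4^-}\epsilon^{(-5/4)^+}$ — is the delicate part, and one has to be careful that the regularity threshold $s>\tfrac{13}{8}$ comes out correctly (it reflects $\tfrac32$ from the hyperbolic part plus the $\tfrac18$ derivative loss from the Strichartz estimate).

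Once the a priori estimate $\sup_{[0,T_{\epsilon,\mu}]}\|\eta(t)\|_{H^s_x}\lesssim\|\eta_0\|_{H^s_x}$ together with the auxiliary Strichartz norm bound is in hand, the remaining steps are routine. For existence I would regularize: solve the mollified equations (which are globally well-posed ODEs in $H^s$), obtain uniform-in-mollification bounds on $[0,T_{\epsilon,\mu}]$ from the same energy/Strichartz argument, and pass to the limit using Aubin--Lions compactness, which gives a solution in $C([0,T_{\epsilon,\mu}];H^s)$; the equation then gives $\eta\in C^1([0,T_{\epsilon,\mu}];H^{s-1})$. Uniqueness follows by an $L^2$ (or $H^{s-1}$) energy estimate on the difference of two solutions, using the already-established $\|\partial_x\eta\|_{L^1_TL^\infty_x}$ control as the Gronwall weight. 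Finally, continuity of the flow map on a neighborhood $\mathcal U$ of $\eta_0$ — with the loss from $T_{\epsilon,\mu}$ to $T_{\epsilon,\mu}/2$ — follows from the Bona--Smith trick: approximate data by smooth data, use continuity in the strong topology for the smooth approximations plus uniform continuity estimates at low regularity, and conclude by a standard triangle-inequality argument. I expect no essential difficulty in these last steps beyond keeping the $\mu$- and $\epsilon$-dependence uniform, which the a priori estimates already guarantee.
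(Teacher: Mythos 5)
Your overall architecture (commutator energy estimate reducing everything to $\epsilon\norm{\partial_x\eta}_{L^\infty_x}$, a $\mu$-dependent Strichartz bound on that quantity, a bootstrap, and then invoking local well-posedness for uniqueness, persistence and continuity of the flow) is exactly the paper's. However, there is a genuine gap in the central step. The mechanism you describe for controlling the $L^p_TL^\infty_x$ norm of $\partial_x\eta$ --- a linear-group Strichartz estimate with a fixed derivative loss and a power $\mu^{-\theta}$, applied through the Duhamel formula on the whole interval $[0,T]$, producing $\norm{\partial_x\eta}_{L^1_TL^\infty_x}\lesssim T^{1-1/q}\mu^{-\theta}\bigl(\norm{\eta_0}_{H^s}+\epsilon\norm{\partial_x(\eta^2)}_{L^1_TH^{s'}}\bigr)$ --- is precisely the ``direct'' application that the paper points out is insufficient: it yields a lifespan of order $\epsilon^{-1}\min\bigl((\mu/\epsilon)^{1/3},(\mu/\epsilon)^{1/7}\bigr)$, i.e.\ $\epsilon^{-8/7}\mu^{1/7}$ when $\epsilon<\mu$, not the claimed $\epsilon^{-1}(\mu/\epsilon)^{\frac14^-}$. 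The issue is not bookkeeping of exponents but the structure of the estimate. What is actually needed (Theorem \ref{thm:refined_strichartz} and Corollary \ref{cor:refined_strichartz_d_1}) is a \emph{refined} estimate obtained by (i) splitting at a frequency $\omega$ chosen as $(\mu T)^{-1/5}$ and handling low frequencies by Bernstein, and (ii) for each dyadic high frequency $\lambda$, partitioning $[0,T]$ into $T/\rho(\lambda)$ subintervals of length $\rho(\lambda)=T^{4/5}\mu^{-1/5}\lambda^{-1}$, applying the frequency-localized Strichartz estimate on each piece and summing in $\ell^2$. This trades a factor $(T/\rho)^{1/2}$ for a gain in $\lambda$ and $\mu$, replaces $\norm{\eta_0}_{H^s}$ by $\norm{\eta}_{L^\infty_TH^s}$ on the right-hand side, and produces the specific bound \eqref{eq:boundedness_whitham}, namely $\norm{\partial_x\eta}_{L^2_TL^\infty_x}\lesssim \mu^{(-\frac15)^+}T^{\frac{3}{10}^+}\norm{\eta}_{L^\infty_TH^s_x}+\epsilon\mu^{(-\frac25)^+}T^{\frac{11}{10}^+}\norm{\eta}_{L^\infty_TH^s_x}^2$, whose powers are exactly what make the Gr\"onwall exponent $\epsilon T^{1/2}\norm{\partial_x\eta}_{L^2_TL^\infty_x}$ of size $O(1)$ up to $T\sim\epsilon^{-1}(\mu/\epsilon)^{\frac14^-}\norm{\eta_0}_{H^s_x}^{(-\frac54)^+}$. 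Without this frequency-dependent time-slicing your bootstrap cannot close at the stated timescale.

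Two smaller remarks. The threshold $s>\frac{13}{8}$ does not arise as ``$\frac32$ plus a $\frac18$ Strichartz loss on top of the hyperbolic threshold''; it comes out of Proposition \ref{prop:boundedness_whitham}, where one needs $J^\theta\abs{D}^{\frac12+\frac{3\theta}{2}}\eta$ and $J_\mu^{5/8}\abs{D}\eta$ in $L^\infty_TL^2_x$, i.e.\ $1+\frac58=\frac{13}{8}$ derivatives on $\eta$. For the remaining properties, your Bona--Smith/mollification route would work, but the paper simply cites an existing local well-posedness result (Proposition \ref{prop:existence_whitham}) and extends uniqueness, continuous dependence and persistence to $[0,T_{\epsilon,\mu}]$ by iterating it on short subintervals; either route is acceptable here.
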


\begin{remark}
  The result in \cite{Ehrnstrom2022} provides a larger time of existence in the small data regime $\mathcal{R}_{\mathrm{SD}}$, namely $T \sim \epsilon^{-2} \ge \epsilon^{-\frac{5}{4}}$. In comparison, our result incorporates the additional small parameter $\mu$ and lowers the regularity for the initial data to $s>\frac{13}{8}$. Moreover, in the long wave regime $\mathcal{R}_{\mathrm{LW}}$, we recover the hyperbolic timescale $\epsilon^{-1}$.  Although our result does not reach the timescale achieved in \cite{Ehrnstrom2022}, the proof is both robust and conceptually straightforward, relying solely on Strichartz estimates. Combining a normal form transformation with the techniques in this paper will be an objective for future work. 
  \end{remark}

To state our result for the Whitham-Boussinesq systems \eqref{eq:whitham_boussinesq} and \eqref{eq:whitham_boussinesq_d2}, we need to introduce the non-cavitation condition:
\begin{definition}[Non-cavitation condition]
    Let $d\in \{1,2\}$,  $s>\frac{d}{2}$, and $\epsilon\in (0,1]$. We say that the initial surface elevation $\eta_0 \in H^s(\R^d)$ satisfies the \textit{non-cavitation condition} if there exists a constant $h_0 \in (0,1)$ such that 
    \begin{align}\label{eq:noncav}
        1 + \epsilon\eta_0(x) \geq h_0
    \end{align}
    for all $x \in \R^d$. 
\end{definition}
The natural solution space for \eqref{eq:whitham_boussinesq}--\eqref{eq:whitham_boussinesq_d2} is defined as follows. 
\begin{definition}\label{def:solution_space}
    We define the solution space $V_\mu^s(\R^d)$ via the norm
    \begin{align*}
        \norm{(\eta,\mathbf{v})}_{V_\mu^s}^2  := \norm{\eta}_{H^s_x}^2 + \norm{\mathbf{v}}_{H^s_x}^2 + \sqrt{\mu} \norm{|D|^\frac{1}{2} \mathbf{v}}_{H^s_x}^2.
    \end{align*}   
\end{definition}
\begin{remark}
    We observe that 
    \begin{align}\label{eq:equivalent_norm_Vsmu}
        \norm{(\eta,\mathbf{v})}_{V_\mu^s}^2 \sim  \norm{\eta}_{H^s_x}^2 + \norm{\mathcal{T}_\mu^{-\frac{1}{2}}(D) \mathbf{v}}_{H^s_x}^2
    \end{align}
    (see \cite[Corollary 2.6]{Paulsen2022}).
\end{remark}
We are now in a position to state the result on the systems.
\begin{thm}\label{thm:wellposedness_WB}
    Let $d\in \{1,2\}$,  $s>1+\frac{5d}{8}$,
        and $\epsilon,\mu \in (0,1]$. Assume that the initial data $(\eta_0,\mathbf{v}_0) \in V^s_\mu(\R^d)$ 
    satisfy \eqref{eq:noncav} and that $\mathrm{curl}\:\mathbf{v}_0=0$ if $d=2$. Then there exists a positive time
    \begin{equation}\label{WBtime}
        T_{\epsilon,\mu,d}= c^{-1}\epsilon^{-\frac{d+3}{4}} \Big(\frac{\mu}{\epsilon}\Big)^{\frac{1}{4}^-} (\norm{(\eta_0,\mathbf{v}_0)}_{V^s_\mu})^{(-\frac{d+4}{4})^+},
    \end{equation}
    for some $c=c\bigl(d,s,h_0, 1+\epsilon\norm{(\eta_0,\mathbf{v}_0)}_{L^\infty_x\times L^\infty_x}\bigr) > 0$,
    such that \eqref{eq:whitham_boussinesq} and \eqref{eq:whitham_boussinesq_d2} admits a unique solution 
    \begin{align*}
        (\eta,\mathbf{v}) \in C([0,T_{\epsilon,\mu,d}];V_\mu^s(\R^d)) \cap C^1([0,T_{\epsilon,\mu,d}];V_\mu^{s-1}(\R^d))
    \end{align*}
    satisfying the initial conditions $\eta(0) = \eta_0$ and $\mathbf v(0) = \mathbf v_0$. Moreover,
    \begin{align*}
        \sup_{t\in[0,T_{\epsilon,\mu,d}]} \norm{(\eta,\mathbf{v})(t)}_{V_\mu^s} \lesssim \norm{(\eta_0,\mathbf{v}_0)}_{V_\mu^s},
    \end{align*}
    where the implicit constant may depend on the non-cavitation parameter $h_0$ and $1+\epsilon\norm{(\eta_0,\mathbf{v}_0)}_{L^\infty_x\times L^\infty_x}$. Furthermore, the flow map depends continuously on the initial data.
\end{thm}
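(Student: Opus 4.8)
The plan is to establish Theorem~\ref{thm:wellposedness_WB} by an energy method refined with Strichartz estimates, following the scheme that works for Theorem~\ref{thm:wellposedness_Whitham} but adapted to the non-symmetric, coupled structure of \eqref{eq:whitham_boussinesq}--\eqref{eq:whitham_boussinesq_d2}. First I would symmetrize the system in the spirit of Saut--Xu \cite{Saut2012} and Paulsen \cite{Paulsen2022}: write the system as $\partial_t U + \mathcal{L}_\mu U + \epsilon \mathcal{A}(U)U = 0$ with $U = (\eta,\mathbf v)$, and conjugate by a symmetrizer $S_\mu$ adapted to the $V^s_\mu$-norm so that the quasilinear part becomes essentially skew-adjoint modulo lower-order commutators; the non-cavitation condition \eqref{eq:noncav} is exactly what guarantees the symmetrizer is uniformly positive-definite (with constants depending on $h_0$ and $1+\epsilon\norm{(\eta_0,\mathbf v_0)}_{L^\infty\times L^\infty}$), and the curl-free condition on $\mathbf v_0$ is propagated by the flow. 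Running the standard energy estimate on $\langle S_\mu \Lambda^s U, \Lambda^s U\rangle$ (with $\Lambda = (1-\Delta)^{1/2}$) produces, via Kato--Ponce-type commutator estimates tracking the $\mu$-weights, a differential inequality of the schematic form $\frac{d}{dt}\norm{U}_{V^s_\mu}^2 \lesssim \epsilon\,\norm{\nabla U}_{L^\infty_x}\,\norm{U}_{V^s_\mu}^2 + (\text{lower order})$, where the crucial gain over the purely hyperbolic estimate is that the top-order Burgers-type term is controlled by $\norm{\nabla U}_{L^\infty_x}$ rather than $\norm{U}_{V^s_\mu}$.

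The second and decisive step is to bootstrap this inequality using Strichartz estimates for the linear group $e^{-t\mathcal{L}_\mu}$ to absorb the time integral $\int_0^T \norm{\nabla U(t)}_{L^\infty_x}\,dt$ at a cost better than the trivial $T\cdot\sup_t\norm{U}_{V^s_\mu}$. Here I would invoke the refined $\mu$-dependent Strichartz estimates promised in the introduction (the "key ingredient"): for the Whitham dispersion $\mathcal{T}_\mu(D)$, which behaves like $|\xi|^{-1/2}$-type dispersion at high frequency but degenerates as $\mu\to 0$, one gets a smoothing gain of the form $\norm{\jbrack{D}^{\sigma} e^{-t\mathcal{L}_\mu} f}_{L^q_T L^r_x} \lesssim \mu^{-\alpha}\,\norm{f}_{L^2_x}$ for an admissible $(q,r)$ and $\sigma$ slightly below the full $1/2$-derivative gain; combined with Duhamel and Sobolev embedding, the inhomogeneous term is handled the same way, paying powers of $T$ and $\mu$. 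Splitting $U$ into low and high frequencies (with threshold $N$ to be optimized), estimating low frequencies by Bernstein and high frequencies by Strichartz, and feeding the resulting bound $\int_0^T\norm{\nabla U}_{L^\infty}\,dt \lesssim (\text{power of }T)\,(\text{power of }\mu^{-1})\,\norm{U}_{V^s_\mu}^{\text{something}}$ back into the energy inequality yields a closed estimate that holds up to the claimed time $T_{\epsilon,\mu,d}$; the bookkeeping of the exponents is precisely engineered to produce $T_{\epsilon,\mu,d} \sim \epsilon^{-\frac{d+3}{4}}(\mu/\epsilon)^{\frac14^-}\norm{(\eta_0,\mathbf v_0)}_{V^s_\mu}^{(-\frac{d+4}{4})^+}$, and the dimension-dependence $d+3$ versus $d+4$ comes from the admissible-pair constraints, which are tighter in $d=2$. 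The regularity threshold $s>1+\frac{5d}{8}$ is dictated by needing $\nabla U\in L^\infty_x$ after losing the fractional derivative $\frac{3d}{8}$-ish in the Strichartz step (compare $s>1+\frac d2$ in the pure energy method of \cite{Paulsen2022}).

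The remaining steps are comparatively routine. Existence follows by running the a priori estimate on a regularized system (e.g. Bona--Smith parabolic regularization or a Friedrichs mollifier scheme), extracting a weak-* limit in $L^\infty_T V^s_\mu$ and upgrading to strong convergence in $V^{s'}_\mu$ for $s'<s$, then to $C_T V^s_\mu$ by the Bona--Smith argument; uniqueness and continuous dependence of the flow map follow from an energy estimate on the difference of two solutions at one derivative below $s$, again using the non-cavitation lower bound to keep the symmetrizer positive. The $C^1_T V^{s-1}_\mu$ regularity in time is read off directly from the equation. I expect the main obstacle to be the second step: producing Strichartz estimates for $\mathcal{T}_\mu(D)$ that are simultaneously (i) sharp enough in the number of derivatives gained to make $s>1+\frac{5d}{8}$ work, and (ii) explicit in their $\mu$-dependence, since the stationary-phase/Littlewood--Paley analysis of the phase $\sqrt{\mathcal{T}_\mu(\xi)}|\xi|$ must be performed frequency-band by frequency-band with the degeneracy at $\mu|\xi|^2\lesssim 1$ versus $\mu|\xi|^2\gtrsim 1$ handled separately, and then the two regimes glued so that the final power of $\mu$ is the one appearing in $T_{\epsilon,\mu,d}$. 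Getting the coupling/symmetrization to interact cleanly with this frequency decomposition — so that the off-diagonal terms in the symmetrized system do not destroy the gain — is the place where the argument is most likely to require care.
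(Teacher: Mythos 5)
Your high-level plan matches the paper's actual proof: an energy estimate of the form $\frac{\d}{\d t}\norm{U}_{V^s_\mu}^2 \lesssim \epsilon\,\mathcal P(t)\,\norm{U}_{V^s_\mu}^2$ with $\mathcal P$ built from $L^\infty_x$-norms of derivatives, controlled by the refined $\mu$-dependent Strichartz estimates of Section~\ref{sec:disp_ests}, closed by a bootstrap; uniqueness, continuous dependence, and higher regularity are then local-in-time properties inherited from the existing local well-posedness theory (Paulsen's), not re-derived from scratch. The route is the same; the existence-by-regularization machinery you sketch is replaced in the paper by citing Theorem~\ref{thm:paulsen} and iterating it together with the blow-up alternative, which is a routine difference.

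There is, however, one place where your proposal names a difficulty without supplying the idea that resolves it, and that idea is genuinely load-bearing. You write that ``getting the coupling/symmetrization to interact cleanly with the frequency decomposition \dots\ is the place most likely to require care,'' but leave it as a concern. The paper's resolution is to pass to the diagonal variables
\[
u^\pm \;=\; \tfrac{1}{2}\bigl(\eta \mp i\,\mathcal{T}_\mu^{-\frac{1}{2}}(D)\,\mathbf{R}\cdot\mathbf{v}\bigr),
\]
using the curl-free condition so that $\mathbf R$ inverts $\nabla$ on $\mathbf v$; in these variables the linear part of \eqref{eq:whitham_boussinesq_d2} decouples into two scalar Whitham-type flows $i\partial_t u^\pm \pm \sqrt{\mathcal T_\mu}(D)|D|\,u^\pm = \cdots$, whose propagator is exactly the group $S_{\mu,d}(t)$ for which the frequency-localized Strichartz estimates (Lemma~\ref{strichartz:lemma}) and then Corollary~\ref{cor:refined_strichartz_d_1} are available. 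Without this diagonalization one cannot invoke the scalar Strichartz bounds directly, and the ``off-diagonal terms'' you worry about are not a small perturbation — they carry the full dispersion. A second, related point your proposal skips: to make the symmetrized energy $\int (J^s\eta)^2 + (1+\epsilon\eta)|J^s\mathcal T_\mu^{-1/2}(D)\mathbf v|^2\,\d x$ produce a right-hand side that is linear in $\norm{\nabla U}_{L^\infty_x}$ (rather than in $\norm{U}_{V^s_\mu}$), the paper needs the new commutator estimates of Lemmas~\ref{lem:commutator_est_tilbert} and~\ref{lem:commutator_inv_tilbert} for $J^s\sqrt{\mathcal T_\mu}(D)$ and $J^s\mathcal T_\mu^{-1/2}(D)$; the ordinary Kato--Ponce bound is not enough here, because $\mathcal T_\mu^{-1/2}(D)$ is of positive order $\frac{1}{2}$ at high frequency with a $\mu$-dependent threshold, and tracking that $\mu$-dependence is precisely what keeps the differential inequality from degenerating. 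These two ingredients — the $u^\pm$ diagonalization and the $\mathcal T_\mu^{\pm1/2}$-adapted commutator estimates — are the genuine additional content beyond what you wrote.
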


\begin{remark}
In the one-dimensional case, we recover the result in  \cite[Theorem 1.11]{Paulsen2022} in the long wave regime $\mathcal{R}_{\mathrm{LW}}$, and moreover, we improve the lifespan of the solution whenever $\epsilon < \mu$. 
\end{remark}

\begin{remark}
In the two-dimensional case, we improve the result in \cite[Theorem 1.11]{Paulsen2022} when $0<\epsilon < \sqrt{\mu}$. It is worth noting that in the long wave regime $\mathcal{R}_{\mathrm{LW}}$, we obtain a lifespan of order $\epsilon^{-\frac{5}{4}}$. To our knowledge, results in which the lifespan exceeds the hyperbolic timescale $\epsilon^{-1}$ in the long wave regime appear to be new, even for the classical Boussinesq systems \eqref{abcd} (see, for example, \cite{Saut2012,Burtea2016} where hyperbolic techniques are used). This improvement might be a consequence of the dispersive nature of the zero-order wave system in \eqref{eq:whitham_boussinesq_d2}, which we exploit through Strichartz estimates\footnote{We refer to \cite{Sideris1997} for a long time existence result for the compressible Euler system which relates to the shallow water system corresponding to $\mu=0$ in \eqref{abcd} in the divergence free case.}.  Such property might also be true for some versions of the Boussinesq systems \eqref{abcd}. This would answer a question
raised by Jean-Claude Saut to the first author several years ago.
\end{remark}

\begin{remark}
    The water waves equations were proved to be well-posed on the timescale $1/\epsilon$ in the seminal work by Alvarez-Samaniego and Lannes \cite{Alvarez2008}. Since Theorems  \ref{thm:wellposedness_Whitham} and \ref{thm:wellposedness_WB} provide a larger time of existence in the weakly nonlinear regime $\epsilon<\mu$, one might naturally ask whether \eqref{eq:Whitham}, \eqref{eq:whitham_boussinesq}, and \eqref{eq:whitham_boussinesq_d2} continue to approximate the solution to the water waves equations over these extended timescales with the same precision as in \cite{Emerald2021,Emerald2021_siam}. To complete the justification, one also needs to prove well-posedness of the water waves equations on these larger timescales. These are prospects for future work.
\end{remark}

%\begin{remark} 
%    In \cite[Theorem 1.11]{Paulsen2022} an additional assumption $0 < \epsilon c \norm{(\eta_0,\mathbf{v}_0)}_{V^s_\mu} \leq 1$ was made on the initial data. Here we do not need this assumption. However, if we do impose it, then by a Sobolev embedding it follows that the constant $c$ in \eqref{WBtime} only depends on $(d,s, h_0)$. 
%\end{remark}

\begin{remark} \label{rem:Strich}
    To obtain the enhanced lifespan in Theorems \ref{thm:wellposedness_Whitham} and \ref{thm:wellposedness_WB}, we rely on Strichartz estimates for the linear part of the Whitham equation \eqref{eq:Whitham} that include the small parameter $\mu$.  These estimates were proved independently by Tesfahun  \cite{Tesfahun2024} and Melinand \cite{Melinand2024}. In Section \ref{sec:disp_ests}, we explain how they can be directly deduced from the case $\mu=1$ (see \cite{Dinvay2020}) by a simple scaling argument.  However, the main novelty of this paper is to combine new refined Strichartz estimates with the energy method to improve the existence time. 
\end{remark}

\subsection{Strategy and outline}\label{sec:strategy_outline}  
Broadly speaking, the proofs of Theorems \ref{thm:wellposedness_Whitham} and \ref{thm:wellposedness_WB} follow the same strategy, combining energy estimates with Strichartz estimates. We first illustrate the approach for the simpler case of the Whitham equation in Theorem \ref{thm:wellposedness_Whitham}, and then make a few remarks on the extensions needed to prove Theorem  \ref{thm:wellposedness_WB}.

The energy associated with the Whitham equation \eqref{eq:Whitham} is the square of the classical $H^s$-norm.
By differentiating in time and using \eqref{eq:Whitham}, we obtain
\begin{align} \label{en:est}
    \frac{\d}{\d t} \Big( \norm{\eta}_{H^s_x}^2\Big) &\le c  \epsilon \norm{\partial_x \eta}_{L^\infty_x} \norm{\eta}_{H^s_x}^2,
\end{align}
which yields by using a Gr\"{o}nwall argument 
\begin{align}\label{eq:est_eta_v_sec_1}
    \norm{\eta}_{L^\infty_T H^s_x}^2 \le \mathrm{e}^{c\epsilon T^{1/2} \norm{\partial_x \eta}_{L^2_T L^\infty_x} } \norm{\eta_0}_{H^s_x}^2.
\end{align}
The Sobolev embedding $H^{\frac12^+}(\mathbb R) \hookrightarrow L^{\infty}(\R)$ provides the classical time of existence $1/\epsilon$. 

To improve the lifespan, we will rely on the Strichartz estimates for the linear part of \eqref{eq:Whitham} to control $\norm{\partial_x \eta}_{L^2_T L^\infty_x}$, (see Remark \ref{rem:Strich} for some comments on the two-dimensional extension). If this estimate is applied directly to the Duhamel formulation of \eqref{eq:Whitham},  it yields a lifespan of order $  \epsilon^{-1} \min\big( (\mu/\epsilon)^{\frac{1}{3}}, (\mu/\epsilon)^{\frac{1}{7}} \big)$, which corresponds to  $\epsilon^{-\frac87}\mu^{\frac{1}{7}}$ when $\epsilon<\mu$. 

To fix this imbalance between $\mu$ and $\epsilon$, we introduce a family of dispersive Strichartz estimates depending on two free parameters $\omega$ and $\rho$ (see Theorem \ref{thm:refined_strichartz}).  The first one, $\omega$, characterizes the splitting of the function in high and low frequencies. We estimate the low-frequency part directly by using Bernstein's inequality. To estimate the high-frequency part, we first use a dyadic decomposition in frequency, and then split the time interval into small pieces of size $\rho(\lambda)$ depending on the dyadic frequency $\lambda$. The Strichartz estimate is then applied to each of these subintervals, and the resulting bounds are summed over all intervals. The technique in the proof of Theorem \ref{thm:refined_strichartz} is inspired from the works in \cite{Tataru2000}, \cite[Lemma 3.6]{Burq2004}, \cite[Lemma 2.1]{Koch2003}, \cite[Proposition 2.8]{Kenig2003}, %\cite[Lemma 1.7]{Kenig2004}, 
\cite[Proposition 2.3]{Linares2014},  and \cite[Lemma 4.1]{Pilod2021}. 

Accordingly, we obtain the enhanced lifespan in Theorem \ref{thm:wellposedness_Whitham} by choosing the parameters $\omega$ and $\rho$ such that the small parameters $\epsilon$ and $\mu$ are optimally balanced in the estimates (see Corollary \ref{cor:refined_strichartz_d_1}). This gives us, for solutions of \eqref{eq:Whitham},
\begin{align}\label{eq:est_V_cal}
    \epsilon T^\frac{1}{2}\norm{\partial_x \eta}_{L^2_T L^\infty_x} &\lesssim \epsilon \mu^{(-\frac{1}{5})^+ }   T^{\frac{4}{5}^+}  \norm{\eta}_{L^\infty_TH^s_x} +  \epsilon^2  \mu^{(-\frac{2}{5})^+}  T^{\frac{8}{5}^+} \norm{\eta}^2_{L^\infty_TH^s_x}
\end{align}
for $s> \frac{13}{8}$ (see Proposition \ref{prop:boundedness_whitham}). Combining the estimates \eqref{eq:est_eta_v_sec_1} and \eqref{eq:est_V_cal} we may apply a standard bootstrap argument to obtain a priori bounds on the energy on the timescale $T_{\epsilon,\mu}$ as defined in \eqref{T_eps_mu} (see Lemma \ref{lem:bootstrap_whitham}). This gives us the existence of a solution $\eta \in C([0,T_{\epsilon,\mu}]; H^s({\mathbb R}))$. The uniqueness and continuous dependence on the initial data will be obtained by utilizing the existing local well-posedness result (see Remarks \ref{rem:w_rem1}--\ref{rem:w_rem3} for more details).   %The uniqueness follows from the classical energy estimate for the difference of two solutions in $L^2(\R)$.  To prove the continuous dependence of the flow map data-solution, we use the general result in \cite[Theorem 1]{Alazard2024} based on the frequency envelope argument introduced by Tao in \cite{Tao2001}. %Alternatively, one can utilize a Bona-Smith argument \cite{Bona1975}. 

\indent 
The approach explained above can also be extended to systems. This is illustrated in the proof of Theorem \ref{thm:wellposedness_WB}. Here, we will focus on the proof of the two-dimensional case, which is more involved. For this, we need to refine the energy estimates in \cite[Proposition 3.3]{Paulsen2022} to obtain an estimate in the spirit of \eqref{en:est} with the $L^\infty$-norm of the derivatives of the solutions appearing on the right-hand side (see Proposition \ref{prop:energy_ests_d_2}). This will allow us to use the refined Strichartz estimate on the diagonalized version of \eqref{eq:whitham_boussinesq_d2} (see Proposition \ref{prop:boundedness_eta_v_d_2}) as for the Whitham equation. 

 The energy functional is slightly changed in comparison to \cite[Definition 1.18]{Paulsen2022}, as it includes the nonlocal operator $\mathcal{T}_{\mu}^{-\frac{1}{2}}(D)$ (instead of $J^\frac{1}{2}_\mu=\jbrack{\sqrt{\mu}D}^{\frac12}$):
 \begin{definition}\label{def:energy}
    Let $\epsilon,\mu\in (0,1]$ and let $J^s$ be the Bessel potential of order $-s$. We define the energy functional associated with \eqref{eq:whitham_boussinesq_d2} as
    \begin{align*}
        E_s(\eta,\mathbf{v}) = \int_{\R^2} \Big\{ \Big( J^s \eta \Big)^2 + (1+\epsilon\eta)\Big|J^s \mathcal{T}_{\mu}^{-\frac{1}{2}}(D) \mathbf{v} \Big|^2 \Big\} \d x.
\end{align*}
\end{definition}

This, in turn, simplifies the calculations. The energy in Definition \ref{def:energy} requires new commutator estimates (see Lemmas \ref{lem:commutator_est_tilbert} and \ref{lem:commutator_inv_tilbert}). With this energy and the commutator estimates, we can extend the techniques used for the proof of Theorem \ref{thm:wellposedness_Whitham} (explained above) to prove Theorem \ref{thm:wellposedness_WB}.

Finally, we refer to \cite{Melinand2024b} where classical Strichartz estimates are combined with the energy method to study the rigid lid in shallow water of several Boussinesq system.

\medskip
\indent The paper is organized as follows. Section \ref{sec:RefinedGeneric} is devoted to proving a generic refined Strichartz estimate (Theorem \ref{thm:refined_strichartz}). In Section \ref{sec:disp_ests}, we first prove Strichartz estimates including the small parameter $\mu$ (Lemma \ref{strichartz:lemma}), which together with Theorem \ref{thm:refined_strichartz} will allow us to derive new refined Strichartz estimates for \eqref{eq:Whitham}, \eqref{eq:whitham_boussinesq}, and \eqref{eq:whitham_boussinesq_d2} (Corollary \ref{cor:refined_strichartz_d_1}). Theorem \ref{thm:wellposedness_Whitham}  is proved in Section \ref{sec:Whitham} and Theorem \ref{thm:wellposedness_WB} in Section \ref{sec:WB_d_2}. In Sections \ref{sec:energy_whitham}--\ref{sec:refined_str_whitham}
(respectively, Sections \ref{sec:commutator_ests}--\ref{sec:refined_str_wb}), we will prove tools needed to prove Theorems \ref{thm:wellposedness_Whitham} (respectively, Theorem \ref{thm:wellposedness_WB}). Finally, we combine these results in Section \ref{sec:proof_of_Whitham} (respectively,  Section \ref{sec:proof_main_res_two_dims}) to prove Theorem \ref{thm:wellposedness_Whitham} (respectively, Theorem \ref{thm:wellposedness_WB}).

\subsection{Notation}

\begin{itemize}
 \item We let $c$ denote a positive constant which may change from line to line. 
\item We use the standard notations $\sim$, $\lesssim$, and $\gtrsim$ involving implicit constants that are independent of $\mu$, $\epsilon$, and the time $T$.%, and $\beta$. 
\item $\lceil s \rceil$ for $s\in \R$ denotes the smallest integer larger than or equal to $s$.
\item We denote by $[A,B] = AB -BA$ the commutator between two operators $A$ and $B$. 
\item We use $f\ast g (x) = \int_{\R^d} f(x-y)g(y) \d y$ for any $x\in \R^d$ to denote the convolution in space of two functions $f$ and $g$.
\item Let $L^p(\R^d)$ denote the Lebesgue $p$-integrable functions on $\R^d$ equipped with the norm $\norm{v}_{L^p_x} = (\int_{\R^d} |v|^p \d x)^{1/p}$ for all $p\in [1,\infty)$ and the usual modification for $p=\infty$. Additionally, for any $f,g \in L^2(\R^d)$, we denote the scalar product by $\scalarprod{f}{g}_{L^2_x} = \int_{\R^d} f(x) \overline{g(x)} \d x$.
 \item For simplicity in notation, we denote $L^p_T X:= L^p((0,T);X)$ for any Banach space $X$ (of functions on $\R^d$), $T>0$, and $p \in [1,\infty]$. For $p\in [1,\infty)$,  $L^p_T X$ is equipped with the norm $\norm{f}_{L^p_T X_x} = \Big( \int_0^T \norm{f(\cdot,t)}_{X_x}^p \d t \Big)^{1/p}$ and with the usual modifications for $p=\infty$. The subscript on $X$ is included to make a clear distinction between the norms in time and space. 
\item Let $\mathcal{S}(\R^d)$ denote the Schwartz space.
\item For any $u \in \mathcal{S}(\R^d)$, let $\mathcal{F} u =  \widehat{u} = (u)^{\wedge} $ and $\mathcal{F}^{-1} u = \widecheck{u} =  (u)^{\vee}$ denote the Fourier transform and the inverse Fourier transform, respectively. 
  \item Let $m:\R\rightarrow \R$ be a smooth function, then we denote by $m(D)$ the Fourier multiplier defined by the Fourier transform as $\widehat{m(D)f}(\xi) =m(\xi) \widehat{f}(\xi)$.
\item We denote by $\mathbf{R} = (R_1,R_2)$ the Riesz transformation, where $R_j$ is defined via the Fourier transform as $R_jf = ( -i\frac{\xi_j}{|\xi|} \widehat{f} )^\vee$ for  $j=1,2$ and $f \in \mathcal{S}(\R^2)$.
\item We let $D=-i \partial_x$ for $d=1$ and $D=-i \nabla$ for $d=2$. 
\item For any $s\in \R$ and $\xi \in \R^d$,  the multiplier $\widehat{|D|^sf}(\xi) = |\xi|^s \widehat{f}(\xi)$ is referred to as the Riesz potential of order $-s$. If $d=s=1$, we may write $|D| = \mathcal{H}\partial_x$, where $\mathcal{H}$ is the Hilbert transform defined by the Fourier transform as $\widehat{\mathcal{H}f}(\xi) = -i \mathrm{sgn}(\xi) \widehat{f}(\xi)$. Similarly, for $d=2$ and $s=1$, we may write $|D| = \mathbf{R} \cdot \nabla$.
\item We use the Japanese bracket $\jbrack{\xi}:= (1+|\xi|^2)^{1/2}$ for $\xi \in \R^d$.
\item For any $s\in \R$, $J^s = \jbrack{D}^s$ denotes the Bessel potential of order $-s$. For $\mu \in (0,1]$ and $s\in \R$ we write $J^s_\mu = \jbrack{\sqrt{\mu}D}^s$.
\item The Sobolev space $H^s(\R^d)$ has the norm
 $\norm{f}_{H^s_x} = \norm{J^s f }_{L^2_x}$.
 \item Let $\chi \in C^\infty_c((-2,2))$ such that $\chi(s) = 1$ on $|s| \leq 1$.
 \item We define $\beta(s) := \chi (s) - \chi(2s)$, and the scaled version $\beta_\lambda (s) := \beta(s/\lambda)$ for $\lambda>0$ with the support  $\mathrm{supp} \: \beta_\lambda \subset \{ s\in \R : \lambda/2 \leq |s| \leq \lambda \}$.  
\item For each number $\lambda >0$ and $\xi \in \R^d$, we define the frequency projection $P_\lambda$ by
    $\widehat{P_\lambda f}(\xi) =  \beta_\lambda (|\xi|) \widehat{f}(\xi)$.
\item Let $\alpha > 0$ and $\xi \in \R^d$, then we denote $P_{\leq \alpha} f = \big( \chi(|\xi|/\alpha) \widehat{f} \big)^\vee $ and $P_{> \alpha} f = \big((1- \chi(|\xi|/\alpha)) \widehat{f} \big)^\vee $ so that $f = P_{\leq \alpha} f + P_{>\alpha} f$.
%$P_{\leq \alpha} f = \sum_{\lambda\leq\alpha} P_\lambda f $  and $P_{> \alpha} f = \sum_{\lambda>\alpha} P_\lambda f$
\item For $f=\{f_j\}_{j\in J}$, we denote the norm of $\ell^2_{j\in J}$ by $ \norm{f}_{\ell^2_{j\in J}}
  = \big(\sum_{j\in J} |f_j|^2 \big)^{1/2}$.
\end{itemize}

%\setcounter{tocdepth}{2}
%\tableofcontents

\section{Dispersive estimates for an inhomogeneous equation} \label{sec:RefinedGeneric}
To extend the time of existence of a general model \eqref{general:model}, we consider a generic dispersive linear inhomogeneous IVP of the form
\begin{align}\label{eq:whitham_F_1}
        \begin{dcases}
            i\partial_t u + M(D) u = F\\
            u(x,0) = f(x)
        \end{dcases}
    \end{align}
for a given symbol $M(\xi) \in \R$, whose solution $u(x,t)$, $(x,t)\in \R^d \times \R$, is given by
\begin{align*}
    u(t) =  S(t) f(x) + i\int_0^t S(t-s) F(s) \d s
\end{align*} 
where $S(t) = e^{i t M(D)}$. Fixing $(q,r) \in [2,\infty] \times [2,\infty)$, we assume that the frequency-localized estimate
\begin{equation}\label{GenericLocalised}
   \norm{S(t) P_{\lambda} f}_{L_t^q(\R; L_x^r(\R^d))} \lesssim A(\lambda)^{\frac{1}{2}-\frac{1}{r}} \norm{P_\lambda f}_{L^2_x(\R^d)}
\end{equation}
holds for all $\lambda >0$, for given $A(\lambda) \in (0,\infty)$. With the estimate \eqref{GenericLocalised}, we derive a family of dispersive Strichartz estimates depending on two parameters $\omega$ and $\rho$, which can be chosen freely (see Theorem \ref{thm:refined_strichartz}).  The parameter $\omega$ represents the splitting of the function in high and low frequencies. For the high-frequency part, we divide the time interval into small pieces of size $\rho(\lambda)$, where $\lambda$ is the dyadic frequency. Then, for suitable choices of $(\omega,\rho)$, our generalization can be applied to these types of IVPs to obtain \textit{refined} Strichartz estimates (see, e.g., Corollary \ref{cor:refined_strichartz_d_1}).
More precisely, we have the following result.
%To fix this shift in balance, we introduce a family of dispersive Strichartz estimates depending on two free parameters $\omega$ and $\rho$ (see Theorem \ref{thm:refined_strichartz}).  The first one, $\omega$, characterizes the splitting of the function in high and low frequencies. We estimate the low-frequency part directly by using Bernstein's inequality. To estimate the high-frequency part, we first use a dyadic decomposition in frequency, and then split the time interval into small pieces of size $\rho(\lambda)$ depending on the dyadic frequency $\lambda$. The Strichartz estimate is then applied to each of these sub-intervals, and the resulting bounds are summed over all intervals. The technique in the proof of Theorem \ref{thm:refined_strichartz} is inspired from the works in \cite{Tataru2000}, \cite[Lemma 3.6]{Burq2004}, \cite[Lemma 2.1]{Koch2003}, \cite[Proposition 2.8]{Kenig2003}, \cite[Proposition 2.3]{Linares2014},  and \cite[Lemma 4.1]{Pilod2021}. 

\begin{thm}\label{thm:refined_strichartz}
    Let $d\in \N\backslash\{0\}$ and $T>0$. Assume that $u : \R^d \times \R \rightarrow \R$ solves
    \eqref{eq:whitham_F_1}
    for a given symbol $M(\xi) \in \R$,  and that for some fixed pair $(q,r) \in [2,\infty] \times [2,\infty)$ the associated unitary group $S(t) = e^{itM(D)}$ satisfies 
 \eqref{GenericLocalised}
for all $\lambda >0$ dyadic, for given $A(\lambda) \in (0,\infty)$. Then
\begin{align}\label{GenericRefined}
  \norm{u}_{L_T^2 L_x^\infty}
  &\lesssim
 \omega^{d(\frac{1}{2}-\frac{1}{\Tilde{r}})} T^{\frac{1}{2}} \norm{J^{\Tilde{\theta}} P_{\le \omega} u}_{L_T^\infty L_x^2}
 \\ \notag
 &+ \omega^{-\gamma} T^{\frac{1}{2}} \norm{J^{\theta+\gamma}  \rho(\abs{D})^{-\frac{1}{q}} A(\abs{D})^{\frac{1}{2}-\frac{1}{r}} u}_{L_T^\infty L_x^2}
  \\ \notag
  &+ \norm{J^\theta \rho(\abs{D})^{1-\frac{1}{q}} A(\abs{D})^{\frac{1}{2}-\frac{1}{r}} F}_{L_T^2 L_x^2}
\end{align}
for any $\Tilde{r}\in [2,\infty)$, $\Tilde{\theta} > d/\Tilde{r}$, $\theta > d/r$, and arbitrarily small $\gamma > 0$. Here we are free to choose $\omega > 0$ and $\rho(\lambda) \in (0,T]$.
\end{thm}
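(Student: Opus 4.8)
### Proof Proposal

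The plan is to split $u$ into low and high frequencies via the parameter $\omega$, namely $u = P_{\le \omega} u + P_{>\omega}u$, and estimate each piece separately in $L^2_T L^\infty_x$. For the low-frequency part, I would not use dispersion at all: by Bernstein's inequality (in the form $\norm{P_{\le\omega} g}_{L^\infty_x} \lesssim \omega^{d(\frac12 - \frac{1}{\tilde r})}\norm{P_{\le\omega}g}_{L^{\tilde r}_x}$ for $\tilde r \in [2,\infty)$) combined with the Sobolev embedding $H^{\tilde\theta}(\R^d)\hookrightarrow L^{\tilde r}(\R^d)$ valid for $\tilde\theta > d/\tilde r$, followed by crude integration in time over $[0,T]$ (Cauchy–Schwarz, producing the factor $T^{1/2}$), one gets precisely the first term $\omega^{d(\frac12-\frac1{\tilde r})}T^{1/2}\norm{J^{\tilde\theta} P_{\le\omega} u}_{L^\infty_T L^2_x}$.

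For the high-frequency part, I would perform a Littlewood–Paley decomposition $P_{>\omega}u = \sum_{\lambda > \omega/2 \text{ dyadic}} P_\lambda u$. The key device is a \emph{time-localization argument}: fix a dyadic $\lambda$, partition $[0,T]$ into roughly $T/\rho(\lambda)$ subintervals $I_k$ of length at most $\rho(\lambda)$, and on each $I_k$ write the Duhamel formula $u(t) = S(t - t_k)u(t_k) + i\int_{t_k}^t S(t-s)F(s)\,ds$ for $t_k$ the left endpoint of $I_k$. Applying the frequency-localized Strichartz estimate \eqref{GenericLocalised} (with exponents $(q,r)$) to $P_\lambda$ of each term on $I_k$, using Hölder in time to pass from $L^q(I_k)$ to $L^2(I_k)$ (which costs $|I_k|^{\frac12-\frac1q} \le \rho(\lambda)^{\frac12-\frac1q}$ for the homogeneous term, and an extra factor of $\rho(\lambda)$ from the $\int$ and another Hölder for the Duhamel term, giving $\rho(\lambda)^{1-\frac1q}$), and then summing the squares over $k$ — bounding $\sum_k \sup_{I_k}\|\cdot\|^2 \lesssim \rho(\lambda)^{-1}\|\cdot\|_{L^\infty_T(\cdot)}^2$ for the data term and $\sum_k \|F\|_{L^2(I_k)}^2 = \|F\|_{L^2_T}^2$ for the forcing — yields, for each $\lambda$,
\begin{align*}
\norm{P_\lambda u}_{L^2_T L^r_x}^2 \lesssim A(\lambda)^{1-\frac2r}\Big( \rho(\lambda)^{1-\frac2q}\rho(\lambda)^{-1} \norm{P_\lambda u}_{L^\infty_T L^2_x}^2 + \rho(\lambda)^{2-\frac2q}\norm{P_\lambda F}_{L^2_T L^2_x}^2 \Big).
\end{align*}
Then I would sum over $\lambda$: pass from $L^r_x$ back to $L^\infty_x$ on each dyadic block by Bernstein ($\norm{P_\lambda g}_{L^\infty_x}\lesssim \lambda^{d/r}\norm{P_\lambda g}_{L^r_x}$, i.e. $\theta = d/r$, with $\theta>d/r$ giving the summable surplus), use the triangle inequality in $L^2_T L^\infty_x$ together with an $\ell^1$-over-dyadic-$\lambda$ bound — here the extra factor $\omega^{-\gamma}$ with $\lambda > \omega$ and the surplus in $\theta$ supply the convergent geometric series $\sum_{\lambda>\omega}(\lambda/\omega)^{-\gamma} \lesssim 1$ — and recognize the resulting multipliers $\rho(|D|)^{-1/q}A(|D|)^{\frac12-\frac1r}$ and $\rho(|D|)^{1-1/q}A(|D|)^{\frac12-\frac1r}$ acting on $J^{\theta+\gamma}u$ and $J^\theta F$ respectively. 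This produces the second and third terms of \eqref{GenericRefined}.

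The main obstacle, and the step requiring the most care, is the bookkeeping in the summation over dyadic frequencies $\lambda$: one must make sure that the time-partition bound (which is applied blockwise and then square-summed over $k$ \emph{before} summing over $\lambda$) interacts correctly with the Littlewood–Paley square function / triangle inequality in $L^2_T L^\infty_x$, and that the powers of $\lambda$ coming from Bernstein, from $\rho(\lambda)$, and from $A(\lambda)$ are all absorbed into the stated Fourier multipliers rather than into a bare supremum over $\lambda$. A secondary subtlety is that $\rho(\lambda)$ is allowed to depend on $\lambda$, so the number of subintervals $\lceil T/\rho(\lambda)\rceil$ varies with $\lambda$; one should keep the ceiling and note $\lceil T/\rho(\lambda)\rceil \le 2T/\rho(\lambda)$ since $\rho(\lambda)\le T$, which is exactly why the hypothesis $\rho(\lambda)\in(0,T]$ is imposed. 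Everything else — Bernstein, Sobolev embedding, Hölder in time, Duhamel — is routine.
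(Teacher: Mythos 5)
Your proposal follows essentially the same route as the paper: the same low/high frequency splitting at $\omega$, Bernstein plus Sobolev embedding for the low part, and for the high part a Littlewood--Paley decomposition combined with time-slicing into intervals of length $\rho(\lambda)$, Duhamel, the frequency-localized Strichartz bound, and Hölder in time, with the $\gamma$-loss absorbing the dyadic sum (the paper sums in $\ell^2$ via the square-function theorem and Minkowski rather than your $\ell^1$ triangle inequality, but both work). The only slip is in your interval count, where you write $\sum_k\sup_{I_k}\|\cdot\|^2\lesssim \rho(\lambda)^{-1}\|\cdot\|_{L^\infty_T}^2$ instead of $(T/\rho(\lambda))\|\cdot\|_{L^\infty_T}^2$ — harmless, since you correctly state $\#J\sim T/\rho(\lambda)$ elsewhere and the missing $T$ is exactly what produces the $T^{1/2}$ in the second term of \eqref{GenericRefined}.
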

\begin{remark}
    The proof is inspired by the works in \cite{Tataru2000}, \cite[Lemma 3.6]{Burq2004}, \cite[Lemma 2.1]{Koch2003}, \cite[Proposition 2.8]{Kenig2003}, \cite[Proposition 2.3]{Linares2014},  and \cite[Lemma 4.1]{Pilod2021}.  Compared to these results, we are concerned with enhanced lifetime. This requires a different splitting of the high and low frequencies $\omega$ and size of the small time intervals $\rho(\lambda)$ to obtain Corollary \ref{cor:refined_strichartz_d_1}. Here, both $\omega$ and $\rho$ will be chosen to depend on the small parameter $\mu$, which will result in the enhanced lifespans in Theorems \ref{thm:wellposedness_Whitham} and \ref{thm:wellposedness_WB}. 
\end{remark}

%Fix a time $T > 0$. Unless explicitly stated otherwise, all $L^q_t$ norms are taken over $[0,T]$. 

Before we can prove Theorem \ref{thm:refined_strichartz}, we recall Bernstein's lemma.
\begin{lem}[Bernstein's lemma] For $\alpha > 0$ , we have the estimate
\[
  \norm{P_{\leq \alpha} f}_{L^q(\R^d)} \lesssim \alpha^{d(\frac{1}{p}-\frac{1}{q})} \norm{P_{\leq \alpha}  f}_{L^p(\R^d)}
\]
for $1 \le p \le q \le \infty$.
\end{lem}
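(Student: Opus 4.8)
The plan is to prove Bernstein's lemma by the classical route: exhibit a reproducing kernel for the projection $P_{\le\alpha}$, apply Young's convolution inequality, and track the scaling exponent. I would begin by fixing, once and for all, a radial bump $\psi \in C^\infty_c(\R^d)$ with $\psi \equiv 1$ on the ball $\{|\xi|\le 2\}$, and setting $K := \mathcal{F}^{-1}\psi \in \mathcal{S}(\R^d)$. Since by definition of $P_{\le\alpha}$ the multiplier $\xi \mapsto \chi(|\xi|/\alpha)$ is supported in $\{|\xi| < 2\alpha\}$, the rescaled function $\psi(\cdot/\alpha)$ equals $1$ on that support, so $\chi(|\xi|/\alpha) = \psi(\xi/\alpha)\,\chi(|\xi|/\alpha)$ identically in $\xi$. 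Taking inverse Fourier transforms gives the reproducing identity
\[
  P_{\le\alpha} f = K_\alpha \ast P_{\le\alpha} f, \qquad K_\alpha(x) := \mathcal{F}^{-1}\bigl[\psi(\cdot/\alpha)\bigr](x) = \alpha^{d} K(\alpha x),
\]
the last equality being the elementary Fourier scaling law. (If $f$ is only a tempered distribution one first notes that $P_{\le\alpha}f$ is a smooth function of polynomial growth, so that the convolution is meaningful; the estimate is then asserted whenever its right-hand side is finite.)

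Next I would apply Young's convolution inequality to this identity. Given $1\le p\le q\le\infty$, define $r\in[1,\infty]$ by $\tfrac1r := 1 - \tfrac1p + \tfrac1q$; the condition $p\le q$ is exactly what makes $\tfrac1r \in [0,1]$, so $r$ is admissible. Young's inequality then yields
\[
  \norm{P_{\le\alpha}f}_{L^q(\R^d)} \le \norm{K_\alpha}_{L^r(\R^d)}\,\norm{P_{\le\alpha}f}_{L^p(\R^d)},
\]
and since $K\in\mathcal{S}(\R^d)\subset L^r(\R^d)$, the change of variables $y=\alpha x$ gives $\norm{K_\alpha}_{L^r(\R^d)} = \alpha^{d(1-1/r)}\norm{K}_{L^r(\R^d)}$. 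The bookkeeping step is to observe $d(1-\tfrac1r) = d(\tfrac1p-\tfrac1q)$, by the defining relation for $r$; absorbing the finite constant $\norm{K}_{L^r(\R^d)}$ (depending only on $d$, $p$, $q$, and the fixed choice of $\psi$) into the implicit constant produces
\[
  \norm{P_{\le\alpha}f}_{L^q(\R^d)} \lesssim \alpha^{d(1/p-1/q)}\,\norm{P_{\le\alpha}f}_{L^p(\R^d)},
\]
which is the assertion. The edge cases $r=1$, $r=\infty$ (respectively $q=\infty$) are handled by the usual convention for Young's inequality (Hölder when $q=\infty$).

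I do not expect any genuine obstacle here, as this is a standard fact; the only points demanding a little care are verifying that the Young exponent $r$ lies in $[1,\infty]$ precisely under the hypothesis $p\le q$, the scaling computation $\norm{K_\alpha}_{L^r} = \alpha^{d(1-1/r)}\norm{K}_{L^r}$ together with the identity $d(1-1/r)=d(1/p-1/q)$, and the routine justification that $P_{\le\alpha}f$ is genuinely given by convolution with the Schwartz kernel $K_\alpha$ so that Young's inequality applies.
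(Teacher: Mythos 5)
Your proof is correct and is the standard kernel-plus-Young argument; the exponent bookkeeping ($1/r = 1 - 1/p + 1/q$, hence $d(1-1/r) = d(1/p - 1/q)$) and the admissibility of $r$ under $p \le q$ both check out against the paper's definition of $P_{\le\alpha}$ via $\chi \in C^\infty_c((-2,2))$. The paper simply recalls this lemma without proof, so there is nothing to compare against; your argument is exactly the expected one.
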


\begin{proof}[Proof of Theorem \ref{thm:refined_strichartz}]
    For a number $\omega > 0$ to be chosen, we split into low and high frequencies:
\begin{align*}%\label{eq:decomp_u}
    u = P_{\le \omega} u + \sum_{\lambda > \omega} P_\lambda u =: u_1 + u_2 ,
\end{align*}
giving us
\begin{align}\label{eq:decomp_u_ests}
    \norm{u}_{L_T^2 L_x^\infty} \leq \norm{u_1}_{L_T^2 L_x^\infty} +\norm{u_2}_{L_T^2 L_x^\infty}.
\end{align}
\subsubsection*{\underline{Control of $\norm{u_1}_{L_T^2 L_x^\infty}$ in \eqref{eq:decomp_u_ests}}}
By a Sobolev embedding and Bernstein's lemma in $x$, and H\"{o}lder's inequality in time, the low-frequency part is estimated by
\begin{equation}\label{GenericRefinedLow}
  \norm{u_1}_{L_T^2 L_x^\infty} \lesssim  \norm{ J^{\Tilde{\theta}} P_{\le \omega} u}_{L_T^2 L_x^{\Tilde{r}}} \lesssim \omega^{d(\frac{1}{2}-\frac{1}{\Tilde{r}})} T^{\frac{1}{2}} \norm{J^{\Tilde{\theta}} P_{\le \omega} u}_{L_T^\infty L_x^2}
\end{equation}
for any $\Tilde{r}\in [2,\infty)$ and $\Tilde{\theta} > d/\Tilde{r}$.
\subsubsection*{\underline{Control of $\norm{u_2}_{L_T^2 L_x^\infty}$ in \eqref{eq:decomp_u_ests}}}
By a Sobolev embedding, the Littlewood-Paley square function theorem, and Minkowski's integral inequality, we obtain
\[
  \norm{u_2}_{L_T^2 L_x^\infty} \lesssim  \norm{J^\theta u_2}_{L_T^2 L_x^r} 
  \lesssim  \norm{\sqrt{ \sum_{\lambda > \omega} \Abs{ P_\lambda J^\theta u }^2 } }_{L_T^2 L_x^r}
  \lesssim \sqrt{ \sum_{\lambda > \omega} \norm{ P_\lambda J^\theta u }_{L_T^2 L_x^r}^2 }  
\]
for any $\theta > d/r$. Fix $\lambda > \omega$ and write $[0,T] = \bigcup_{j \in J} I_j$ where $I_j = [a_j,b_j]$ are almost disjoint, $\abs{I_j} = \rho$ and $\# J = T/\rho$, where $\rho=\rho(\lambda) \in (0,T]$ remains to be chosen. Then, applying H\"older's inequality in time, we estimate
\[
  \norm{ P_\lambda J^\theta u }_{L_T^2 L_x^r}
  = \norm{ \norm{ P_\lambda J^\theta u }_{L_{I_j}^2 L_x^r } }_{\ell^2_{j\in J}}
  \le  \rho^{\frac{1}{2}-\frac{1}{q}}  \norm{\norm{ P_\lambda J^\theta u }_{L_{I_j}^q L_x^r} }_{\ell^2_{j\in J}}.
\]
On $I_j$ we write
\[
  u(t) = S(t-a_j) u(a_j) + i \int_{a_j}^t S(t-s) F(s) \, ds
\]
and apply \eqref{GenericLocalised} to obtain
\begin{align*}
  \norm{ P_\lambda J^\theta u }_{L_{I_j}^q L_x^r}
  &\lesssim
  A(\lambda)^{\frac{1}{2}-\frac{1}{r}} \left( \norm{P_\lambda J^\theta u(a_j)}_{L_x^2} + \int_{a_j}^t \norm{P_\lambda J^\theta F(s)}_{L_x^2} \, ds \right)
  \\
  &\lesssim
  A(\lambda)^{\frac{1}{2}-\frac{1}{r}} \left( \norm{P_\lambda J^\theta u}_{L_{I_j}^\infty L_x^2} + \rho^{\frac{1}{2}} \norm{P_\lambda J^\theta F}_{L_{I_j}^2 L_x^2} \right).
\end{align*}
Taking the $\ell^2_{j\in J}$ norm, we conclude that
\begin{align*}
  \norm{ P_\lambda J^\theta u }_{L_T^2 L_x^r}
  &\lesssim  \rho^{\frac{1}{2}-\frac{1}{q}}   A(\lambda)^{\frac{1}{2}-\frac{1}{r}} \left( \sqrt{\# J} \norm{P_\lambda J^\theta u}_{L_T^\infty L_x^2} + \rho^{\frac{1}{2}} \norm{P_\lambda J^\theta F}_{L_T^2 L_x^2} \right)
  \\
  &\lesssim  \rho^{\frac{1}{2}-\frac{1}{q}}   A(\lambda)^{\frac{1}{2}-\frac{1}{r}} \left( T^{\frac{1}{2}} \rho^{-\frac{1}{2}} \norm{P_\lambda J^\theta u}_{L_T^\infty L_x^2} + \rho^{\frac{1}{2}} \norm{P_\lambda J^\theta F}_{L_T^2 L_x^2} \right).
\end{align*}
Finally, squaring and summing over dyadic $\lambda > \omega$ we obtain
\begin{align*}
   \norm{u_2}_{L_T^2 L_x^\infty}  &\lesssim T^{\frac{1}{2}} \left( \sum_{\lambda > \omega} \lambda^{-2\gamma} \right)^{\frac{1}{2}}  \sup_{\lambda} \norm{P_\lambda J^{\theta+\gamma}  \rho(\abs{D})^{-\frac{1}{q}} A(\abs{D})^{\frac{1}{2}-\frac{1}{r}} u}_{L_T^\infty L_x^2}
  \\
  &+ \norm{P_\lambda J^\theta \rho(\abs{D})^{1-\frac{1}{q}} A(\abs{D})^{\frac{1}{2}-\frac{1}{r}} F}_{L_T^2 L_x^2}
\end{align*}
for any $\gamma > 0$. Thus
\begin{align}\label{GenericRefinedhigh}
   \norm{u_2}_{L_T^2 L_x^\infty} &\lesssim \omega^{-\gamma} T^{\frac{1}{2}} \norm{J^{\theta+\gamma} \rho(\abs{D})^{-\frac{1}{q}} A(D)^{\frac{1}{2}-\frac{1}{r}} u}_{L_T^\infty L_x^2}
  \\ \notag
  &+ \norm{P_\lambda J^\theta \rho(\abs{D})^{1-\frac{1}{q}} A(\abs{D})^{\frac{1}{2}-\frac{1}{r}} F}_{L_T^2 L_x^2}
\end{align}
for $\theta > \frac{d}{r}$ and arbitrarily small $\gamma > 0$.\\
\indent
Combining \eqref{GenericRefinedLow} and \eqref{GenericRefinedhigh} with \eqref{eq:decomp_u_ests} concludes the proof.
%\begin{align*}%\label{GenericRefined}
%  \norm{u}_{L_T^2 L_x^\infty}
%  &\lesssim
% \omega^{d(\frac{1}{2}-\frac{1}{r})} T^{\frac{1}{2}} \norm{J^\theta P_{\le \omega} u}_{L_T^\infty L_x^2}
% \\
% &+ \omega^{-\Tilde{\theta}} T^{\frac{1}{2}} \norm{J^{\theta+\Tilde{\theta}}  \rho(\abs{D})^{-\frac{1}{q}} A(\abs{D})^{\frac{1}{2}-\frac{1}{r}} u}_{L_T^\infty L_x^2}
%  \\
% & + \norm{J^\theta \rho(\abs{D})^{1-\frac{1}{q}} A(\abs{D})^{\frac{1}{2}-\frac{1}{r}} F}_{L_T^2 L_x^2}
%\end{align*}
%for $\theta > \frac{d}{r}$ and arbitrarily small $\Tilde{\theta} > 0$.
%Here we are free to choose $\omega > 0$ and $\rho(\lambda) \in (0,T]$.
\end{proof}

\section{Dispersive estimates for linear Whitham-type equations}\label{sec:disp_ests}
In this section, we will first deduce frequency-localized dispersive decay estimates for linear Whitham-type equations (Lemma \ref{disp:lemma}) given by
\begin{equation}\label{Lin:1d}
 \begin{dcases}
     i \partial_t u - \frac{1}{\sqrt{\mu}}m_d(\sqrt{\mu}D) u =0\\
     u(x,0) = f(x)
 \end{dcases} 
\end{equation}
for $(x,t) \in \R^d \times \R$, where the Fourier multiplier $m_d(D)$ is defined in frequency by
\begin{align}\label{eq:m_1}
  m_1(\xi) &= \xi \sqrt{\mathcal{T}_1(\xi)}  = \xi \sqrt{ \frac{\tanh (\abs{\xi})}{\abs{\xi}} } & (\xi \in \R)
  \intertext{and}
  m_2(\xi) &= \abs{\xi} \sqrt{\mathcal{T}_1(\xi)} = \abs{\xi} \sqrt{ \frac{\tanh (\abs{\xi})}{\abs{\xi}} } & (\xi \in \R^2).
  \label{eq:m_2}
\end{align}
Here, the associated propagators are
\begin{equation}\label{eq:unitary_group}
     S_{\mu,d}(t) f = e^{i (t / \sqrt{\mu}) m_d(\sqrt{\mu} D)} f.
\end{equation}
The frequency-localized dispersive decay estimates will then be used to derive frequency-localized Strichartz estimates (see Lemma \ref{strichartz:lemma}). With the Strichartz estimates, we may utilize Theorem \ref{thm:refined_strichartz} and choose refinements of the time interval and frequencies such that we optimally balance the small parameters $(\epsilon,\mu)$. This will give us the desired refined Strichartz estimates (Corollary \ref{cor:refined_strichartz_d_1}), which are crucial in obtaining the improved lifespan (see Proposition \ref{prop:boundedness_whitham}  and Lemma \ref{lem:bootstrap_whitham} for $d=1$, and Proposition \ref{prop:boundedness_eta_v_d_2} and Lemma \ref{lem:bootstrap_wb} for $d=2$).

\subsection{Dispersive decay for linear Whitham-type equations}
\label{sec:local_disp}
Before we can derive the Strichartz estimates, we need frequency-localized dispersive decay estimates for the unitary group \eqref{eq:unitary_group} including the small parameter $\mu\in (0,1]$. For $d=2$, these estimates were already provided in \cite[Lemma 4]{Tesfahun2024} (stated in Lemma \ref{disp:lemma}). It therefore remains to derive these estimates for $d=1$. Introducing a scaling operator, we will observe that it suffices to show these estimates for $\mu=1$. 

Indeed, fixing the dimension $d \in \N$ and a scaling factor $\alpha > 0$, we define the scaling operator $\sigma_\alpha$ by
\[
  \sigma_\alpha f(x) = \alpha^d f(\alpha x) \qquad (x \in \R^d)
\]
for any $f \colon \R^d \to \R$. This operator has the following properties:
\begin{align*}
   \sigma_\alpha^{-1} &= \sigma_{\alpha^{-1}},
  &\partial_x \sigma_\alpha f &= \alpha \sigma_\alpha \partial_x f,
  \\
  \sigma_\alpha (fg) &= \alpha^{-d} (\sigma_\alpha f)(\sigma_\alpha g),
    &\norm{\sigma_\alpha f}_{L^p(\R^d)} &= \alpha^{d(1-1/p)}  \norm{f}_{L^p(\R^d)},
   \\
  \mathcal F\{ \sigma_\alpha f\}(\xi) &= \widehat f(\alpha^{-1}\xi).
\end{align*}
From the last property, we deduce the rescaling rule for Fourier multipliers,
\begin{equation*}%\label{Scaling:F}
  \sigma_\alpha M\left( \alpha D \right) f =  M(D) \sigma_\alpha f
\end{equation*}
for any symbol $M(\xi)$. From the properties of the rescaling operator, one has
\begin{align}\label{eq:scaling_S}
     S_{\mu,d}(t) P_\lambda f = \sigma_{1/\sqrt{\mu}} S_{1,d}\left( \frac{t}{\sqrt{\mu}} \right) P_{\sqrt{\mu} \lambda} \sigma_{\sqrt{\mu}} f
\end{align}
permitting a reduction to the case $\mu=1$ in the following lemma.

\begin{lem}[Frequency-localized dispersive estimates]
\label{disp:lemma} 
Let $d\in\{1,2\}$ and $\mu \in (0,1]$.
Then the dispersive decay bound
\begin{equation}\label{dispersive:1d}
   \norm{S_{\mu,d}(t) P_\lambda f}_{L^\infty_x(\R^d)} \lesssim \mu^{-\frac{1}{2}} \abs{t}^{-\frac{d}{2}} \lambda^{\frac{d}{2}-1} \jbrack{\sqrt{\mu} \lambda}^{\frac{d}{4}+1} \norm{f}_{L^1_x(\R^d)}
\end{equation}
holds for $\lambda > 0$ and $t\in \R\backslash\{0\}$. Here, $S_{\mu,d}(t)$ is defined in \eqref{eq:unitary_group}.
\end{lem}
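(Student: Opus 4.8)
### Proof Plan for Lemma \ref{disp:lemma}

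The plan is to reduce the problem to the case $\mu=1$ via the scaling identity \eqref{eq:scaling_S}, and then to establish the frequency-localized dispersive decay for $S_{1,d}(t)$ directly by stationary phase. First I would invoke \eqref{eq:scaling_S}: writing $S_{\mu,d}(t)P_\lambda f = \sigma_{1/\sqrt{\mu}} S_{1,d}(t/\sqrt{\mu}) P_{\sqrt{\mu}\lambda}\sigma_{\sqrt{\mu}}f$ and using the $L^\infty$ and $L^1$ scaling rules $\norm{\sigma_\alpha g}_{L^\infty_x} = \alpha^d \norm{g}_{L^\infty_x}$ and $\norm{\sigma_{\sqrt{\mu}}f}_{L^1_x} = \norm{f}_{L^1_x}$, the claimed bound \eqref{dispersive:1d} follows from the $\mu=1$ estimate
\[
  \norm{S_{1,d}(\tau)P_\kappa g}_{L^\infty_x(\R^d)} \lesssim |\tau|^{-\frac{d}{2}}\,\kappa^{\frac{d}{2}-1}\,\jbrack{\kappa}^{\frac{d}{4}+1}\norm{g}_{L^1_x(\R^d)}
\]
after substituting $\kappa=\sqrt{\mu}\lambda$, $\tau=t/\sqrt{\mu}$ and collecting the powers of $\mu$ (the prefactor $\mu^{d/2}$ from $\sigma_{1/\sqrt\mu}$ combines with $|\tau|^{-d/2}=\mu^{d/4}|t|^{-d/2}$ and $\kappa^{d/2-1}=\mu^{(d/2-1)/2}\lambda^{d/2-1}$ to give exactly $\mu^{-1/2}|t|^{-d/2}\lambda^{d/2-1}$). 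For $d=2$ the $\mu=1$ estimate is already recorded in \cite[Lemma 4]{Tesfahun2024}, so only $d=1$ needs a new argument.

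For $d=1$ and $\mu=1$, the kernel is $K_\kappa(x,\tau) = \int_{\R} e^{i(x\xi - \tau m_1(\xi))}\beta_\kappa(|\xi|)\,d\xi$, so it suffices to prove $\norm{K_\kappa(\cdot,\tau)}_{L^\infty_x} \lesssim |\tau|^{-1/2}\kappa^{-1/2}\jbrack{\kappa}^{5/4}$ and convolve. I would split into the low-frequency regime $\kappa\lesssim 1$ and the high-frequency regime $\kappa\gtrsim 1$. In both cases the phase is $\phi(\xi) = x\xi - \tau m_1(\xi)$ with $m_1(\xi)=\xi\sqrt{\tanh|\xi|/|\xi|}$; the relevant fact is a lower bound on $|m_1''(\xi)|$ on the dyadic annulus $|\xi|\sim\kappa$. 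A computation of the symbol shows $m_1''(\xi) \sim -\kappa^{-3/2}$ for $\kappa\lesssim 1$ (matching the Airy-type behavior of the Whitham dispersion, whose Taylor expansion near $0$ is $\xi - \tfrac16\xi^3 + \cdots$, wait — more precisely $m_1(\xi)=\xi(1-\tfrac16\mu\xi^2+\cdots)$ at unit scale gives $m_1''\sim\xi$, so on $|\xi|\sim\kappa$ one has $|m_1''|\sim\kappa$), while for $\kappa\gtrsim 1$, using $\tanh|\xi|/|\xi|\sim|\xi|^{-1}$, one finds $m_1(\xi)\sim\mathrm{sgn}(\xi)|\xi|^{1/2}$ and hence $|m_1''(\xi)|\sim\kappa^{-3/2}$. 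The van der Corput lemma with a second-derivative bound $|\phi''|\gtrsim A$ on an interval of length $\sim\kappa$ gives $|K_\kappa|\lesssim (|\tau|A)^{-1/2}$, provided the symbol $\beta_\kappa$ has bounded variation (which it does, uniformly in $\kappa$). Feeding in $A\sim\kappa$ for low frequencies yields $|K_\kappa|\lesssim|\tau|^{-1/2}\kappa^{-1/2}$, consistent with the claimed bound since $\jbrack{\kappa}\sim1$ there; feeding in $A\sim\kappa^{-3/2}|\tau|^{-1}$... — rather $A\sim\kappa^{-3/2}$ for high frequencies gives $|K_\kappa|\lesssim|\tau|^{-1/2}\kappa^{3/4}\sim|\tau|^{-1/2}\kappa^{-1/2}\jbrack{\kappa}^{5/4}$, which is exactly the target.

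The main obstacle I anticipate is the careful verification of the second-derivative lower bounds for $m_1$ on each dyadic block together with the transition regime $\kappa\sim 1$, and ensuring the van der Corput estimate applies uniformly — one must check that $m_1''$ does not vanish on the annulus (it does not, since $\tanh$ is strictly concave/convex in a controlled way away from the origin) and track the exact exponent of $\jbrack{\sqrt{\mu}\lambda}$. A secondary technical point is handling the total-variation bound on the amplitude $\beta_\kappa$ so that van der Corput's constant is $\kappa$-independent; this follows because $\beta_\kappa(|\xi|)$ is a fixed bump rescaled to the annulus, so $\int |\partial_\xi \beta_\kappa|\,d\xi \lesssim 1$ uniformly. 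Once the $d=1$, $\mu=1$ kernel bound is in hand, Young's inequality $\norm{S_{1,1}(\tau)P_\kappa g}_{L^\infty_x} \le \norm{K_\kappa(\cdot,\tau)}_{L^\infty_x}\norm{g}_{L^1_x}$ and the scaling reduction above complete the proof.
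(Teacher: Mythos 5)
Your proof takes essentially the same route as the paper's: reduce to $\mu=1$ via the scaling identity \eqref{eq:scaling_S}, handle $d=2$ by citation, and bound the $d=1$ kernel $I_{\lambda,1}$ in $L^\infty_x$ by stationary phase. The only difference is that you actually carry out the van der Corput argument for the $d=1$, $\mu=1$ kernel bound that the paper merely alludes to as an alternative to citing \cite[Lemma 10]{Dinvay2020}; your second-derivative asymptotics $|m_1''|\sim\kappa$ for $\kappa\lesssim 1$ and $|m_1''|\sim\kappa^{-3/2}$ for $\kappa\gtrsim 1$ correctly reproduce the $\jbrack{\kappa}^{5/4}$ weight, with the non-vanishing of $m_1''$ in the transition regime $\kappa\sim 1$ being the one technical point you flag but do not fully verify.
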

\begin{remark}\label{remark:decay_remark}
    The bound \eqref{dispersive:1d} for $d=2$ was proved in \cite[Lemma 4]{Tesfahun2024}. Alternatively one could apply \eqref{eq:scaling_S} to reduce to the case $\mu=1$, where the bound was proved in \cite[Lemma 10]{Dinvay2020} (for $\lambda > 1$, to be precise, but the proof applies for $\lambda \le 1$).
\end{remark}
\begin{proof}
In view of the preceding remark we only need to consider the case $d=1$, and by \eqref{eq:scaling_S} we may take $\mu=1$.

Writing
\[
  S_{1,1}(t) P_\lambda f = I_{\lambda,1}(t) * f,
\]
where
\[
  I_{\lambda,1}(x,t) = \int_{\R} e^{ix \xi + it m_1(\xi)} \beta(\abs{\xi}/\lambda) \d\xi,
\]
one has
\begin{equation}\label{eq:Sd1}
    \norm{S_{1,1}(t) P_\lambda f}_{L^\infty_x(\R)} \le \norm{I_{\lambda,1}(x,t)}_{L^\infty_x(\R)} \norm{f}_{L^1_x(\R)},
\end{equation}
so it suffices to check that
\begin{equation}\label{Ibound:1d}
   \Abs{I_{\lambda,1}(x,t)} \lesssim \abs{t}^{-\frac{1}{2}} \lambda^{-\frac{1}{2}} \jbrack{\lambda}^{\frac{5}{4}}.
\end{equation}
The bound \eqref{Ibound:1d} was proved in \cite[Lemma 10]{Dinvay2020} for $\lambda > 1$, but the proof also applies for $\lambda \le 1$, as one can check. (As an alternative we remark that a simplified argument that appeals to the van der Corput lemma is possible).
With \eqref{eq:scaling_S} and the estimates \eqref{eq:Sd1}--\eqref{Ibound:1d}, we obtain 
\begin{align*}
    \norm{S_{\mu,1}(t) P_\lambda f}_{L^\infty_x(\R)} &= \frac{1}{\sqrt{\mu}}\norm{S_{1,1}\left( \frac{t}{\sqrt{\mu}} \right) P_{\sqrt{\mu} \lambda} \sigma_{\sqrt{\mu}} f}_{L^\infty_x(\R)}\\
    &\lesssim \frac{1}{\sqrt{\mu}} \Abs{\frac{t}{\sqrt{\mu}}}^{-\frac{1}{2}} (\sqrt{\mu}\lambda)^{-\frac{1}{2}} \jbrack{\sqrt{\mu}\lambda}^{\frac{5}{4}} \norm{\sigma_{\sqrt{\mu}} f}_{L^1_x(\R)}\\
    &= c \mu^{-\frac{1}{2}} \abs{t}^{-\frac{1}{2}} \lambda^{-\frac{1}{2}} \jbrack{\sqrt{\mu}\lambda}^{\frac{5}{4}} \norm{f}_{L^1_x(\R)} 
\end{align*}
 as desired.
\end{proof}

\subsection{Refined Strichartz estimates}\label{sec:refined_strichartz}
From the decay bounds in Lemma \ref{disp:lemma}, one obtains immediately, via the standard $T T^*$-argument, the following frequency-localized Strichartz estimates.

\begin{lem}[Frequency-localized Strichartz estimates] \label{strichartz:lemma} Let $d \in \{1,2\}$ and $\mu \in (0,1]$. Assume that $2 < q \le \infty$ and $2 \le r \le \infty$ satisfy
\begin{equation}\label{eq:hyp_qr}
  \frac{2}{q} = d  \left( \frac{1}{2} - \frac{1}{r} \right).
\end{equation}
Then 
\begin{equation}\label{strichartz:1d}
   \norm{S_{\mu,d}(t) P_\lambda f}_{L^q(\R ;L^r(\R^d))} \lesssim \left( \mu^{-\frac{1}{2}} \lambda^{\frac{d}{2}-1} \jbrack{\sqrt{\mu} \lambda}^{\frac{d}{4}+1} \right)^{\frac{1}{2}-\frac{1}{r}} \norm{P_\lambda f}_{L^2(\R^d)}
\end{equation}
holds for all $\lambda > 0$, where $S_{\mu,d}(t)$ is defined in \eqref{eq:unitary_group}.
\end{lem}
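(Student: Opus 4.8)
The plan is to derive the fixed-time Strichartz estimate \eqref{strichartz:1d} from the frequency-localized dispersive decay bound \eqref{dispersive:1d} by the classical $TT^*$ (or Keel--Tao) machinery, keeping careful track of the $\mu$- and $\lambda$-dependent constant. First I would fix $\lambda>0$ and set $g := P_\lambda f$, so that $S_{\mu,d}(t)P_\lambda f = S_{\mu,d}(t) P_\lambda g$ (using $P_\lambda^2 \sim P_\lambda$, i.e. replacing $\beta$ by a fattened bump $\tilde\beta$ that equals $1$ on the support of $\beta$). The two ingredients are the energy bound $\norm{S_{\mu,d}(t)P_\lambda g}_{L^2_x} \le \norm{g}_{L^2_x}$, valid since $S_{\mu,d}(t)$ is unitary on $L^2$, and the dispersive bound $\norm{S_{\mu,d}(t)P_\lambda g}_{L^\infty_x} \lesssim D(\lambda,\mu)\,\abs{t}^{-d/2}\norm{g}_{L^1_x}$ from Lemma \ref{disp:lemma}, where I abbreviate $D(\lambda,\mu) := \mu^{-1/2}\lambda^{d/2-1}\jbrack{\sqrt\mu\lambda}^{d/4+1}$.

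Next I would interpolate these two bounds: for $2 \le r \le \infty$, Riesz--Thorin interpolation between the $L^1\to L^\infty$ and $L^2\to L^2$ bounds gives
\begin{equation*}
  \norm{S_{\mu,d}(t)P_\lambda g}_{L^r_x} \lesssim D(\lambda,\mu)^{1-\frac{2}{r}}\, \abs{t}^{-\frac{d}{2}(1-\frac{2}{r})}\,\norm{g}_{L^{r'}_x},
\end{equation*}
where $r'$ is the conjugate exponent. Writing $U(t) := S_{\mu,d}(t)P_\lambda$, the operator $U(t)U(s)^* = S_{\mu,d}(t-s)P_\lambda^2$ obeys the same dispersive bound with $\abs{t-s}^{-d/2(1-2/r)}$ in place of $\abs{t}^{-\cdots}$. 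Then the standard $TT^*$ argument reduces the desired estimate $\norm{U(t)g}_{L^q_t L^r_x} \lesssim C \norm{g}_{L^2_x}$ to the bilinear estimate $\bigl|\iint \inner{U(t)U(s)^*F(s)}{F(t)}\,ds\,dt\bigr| \lesssim C^2\norm{F}_{L^{q'}_t L^{r'}_x}^2$, which by the dispersive bound and the Hardy--Littlewood--Sobolev inequality in $t$ holds precisely when $\frac{d}{2}(1-\frac{2}{r}) = \frac{2}{q}$ — this is the admissibility condition \eqref{eq:hyp_qr} — and the endpoint $q=2$ is excluded since HLS fails there, which is why the hypothesis requires $q>2$. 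Tracking the constant through the HLS step, the factor $D(\lambda,\mu)^{1-2/r}$ contributes to the power $\frac{1}{2}-\frac{1}{r}$ in the final bound: indeed $\frac{1}{2}(1-\frac{2}{r}) = \frac{1}{2}-\frac{1}{r}$, so the claimed constant $\bigl(\mu^{-1/2}\lambda^{d/2-1}\jbrack{\sqrt\mu\lambda}^{d/4+1}\bigr)^{1/2-1/r}$ emerges exactly.

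I anticipate no serious obstacle here — this is a textbook $TT^*$ argument — but the one point requiring genuine care is bookkeeping of the $\lambda$- and $\mu$-dependence of the constant, making sure the power of $D(\lambda,\mu)$ coming out of the interpolation/HLS chain is $(\tfrac12-\tfrac1r)$ and not, say, $(1-\tfrac2r)$; the reconciliation is the identity $\tfrac12(1-\tfrac2r)=\tfrac12-\tfrac1r$ noted above. A secondary subtlety is that Lemma \ref{disp:lemma} is stated for the raw multiplier $\beta(\abs\cdot/\lambda)$ rather than an idempotent projection, so when applying $TT^*$ one should either work with a fattened projector or absorb the harmless mismatch $P_\lambda^2$ versus $P_\lambda$ into the dyadic constant (the symbol of $P_\lambda^2$ is still supported in $\lambda/2\le\abs\xi\le\lambda$ and bounded, so the same decay estimate applies). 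One may also invoke the Keel--Tao theorem directly, which packages exactly this deduction; I would cite it for brevity rather than reproving the bilinear estimate in full.
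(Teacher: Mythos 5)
Your proposal is correct and is precisely the argument the paper intends: the paper's entire proof of Lemma \ref{strichartz:lemma} is the single remark that the estimate follows from the decay bounds of Lemma \ref{disp:lemma} ``via the standard $TT^*$-argument,'' which is exactly the interpolation-plus-HLS chain you spell out. Your bookkeeping of the constant (the exponent $\tfrac12(1-\tfrac2r)=\tfrac12-\tfrac1r$ on $D(\lambda,\mu)$) and of the admissibility condition \eqref{eq:hyp_qr} matches the statement, so nothing further is needed.
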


With the frequency-localized estimates in Lemma \ref{strichartz:lemma}, we may utilize Theorem \ref{thm:refined_strichartz} to derive refined Strichartz estimates.  These estimates will be the key ingredient in obtaining the enhanced lifespan (see Proposition \ref{prop:boundedness_whitham}  and Lemma \ref{lem:bootstrap_whitham} for $d=1$, and Proposition \ref{prop:boundedness_eta_v_d_2} and Lemma \ref{lem:bootstrap_wb} for $d=2$). 

\begin{cor}[Refined Strichartz estimates]\label{cor:refined_strichartz_d_1}
    %Let $(q,r ) \in (2,\infty] \times [2,\infty]$ satisfy \eqref{eq:hyp_qr}, 
    Let $d\in\{1,2\}$, $\mu \in(0,1]$,  and $T>0$, and let $u :\R^d  \times [0,T] \rightarrow \R$ solve 
    \begin{align}\label{eq:equation_refined_strichartz}
        \begin{dcases}
            i\partial_t u -\frac{1}{\sqrt{\mu}}m_d(\sqrt{\mu}D) u = F, \\
            u(x,0) = f(x),
        \end{dcases}
    \end{align}
    where the symbol $m_d(D)$ is defined in \eqref{eq:m_1}--\eqref{eq:m_2}.
    Then for $d=1$, we have
    \begin{align}\label{Refined:1d}
         \norm{u}_{L_T^2 L_x^\infty}
         &\lesssim
         \mu^{-\frac{1}{5}+\frac{2\theta}{5}} T^{\frac{3}{10}+\frac{2\theta}{5}} \Big(\norm{J^\theta \abs{D}^{-\frac{1}{2}+\frac{3\theta}{2}} u}_{L_T^\infty L_x^2}
         + \norm{J_\mu^{\frac{5}{8}}  u}_{L_T^\infty L_x^2}\Big)
          \\ \notag
          &+ \mu^{-\frac{2}{5}+\frac{\theta}{5}} T^{\frac{3}{5}+\frac{\theta}{5}} \norm{ \abs{D}^{-1} J_\mu^{\frac{5}{8}} F}_{L_T^2 L_x^2}
    \end{align}
    and for $d=2$, it holds
    \begin{align}\label{Refined:2d}
         \norm{u}_{L_T^2 L_x^\infty}
        & \lesssim
         \mu^{-\frac{1}{6} + \frac{5\theta}{36}} T^{\frac{1}{6} + \frac{5\theta}{18}} \Big(\norm{J^{\theta} u}_{L_T^\infty L_x^2}
         +  \norm{\abs{D}^{\frac{1}{2}-\frac{\theta}{3}} J^{2\theta} J_\mu^{\frac{3}{4}}  u}_{L_T^\infty L_x^2}\Big)
          \\ \notag
          &+ \mu^{-\frac{1}{3}+\frac{\theta}{9}} T^{\frac{1}{3}+\frac{2\theta}{9}} \norm{\abs{D}^{-\frac{1}{2}-\frac{\theta}{3}} J^\theta J_\mu^{\frac{3}{4}} F}_{L_T^2 L_x^2}
\end{align}
for $\theta > 0$ arbitrarily small.
\end{cor}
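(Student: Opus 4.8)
The strategy is to apply Theorem~\ref{thm:refined_strichartz} to the IVP \eqref{eq:equation_refined_strichartz} with the frequency-localized Strichartz estimate \eqref{strichartz:1d} from Lemma~\ref{strichartz:lemma}, and then make explicit, near-optimal choices of the free parameters $\omega$ (the high/low frequency cutoff) and $\rho(\lambda)$ (the size of the time subintervals) so that the powers of $\mu$ and $T$ in the three terms of \eqref{GenericRefined} are balanced. First I would fix the admissible Strichartz pair: since we want the estimate in $L^2_T L^\infty_x$ and will lose derivatives via Sobolev embedding, it is natural to take $q=2^+$ slightly above the endpoint and $r$ large (for $d=1$, $q$ close to $2$ forces $r$ close to $\infty$ through \eqref{eq:hyp_qr}); concretely one works with a Strichartz pair $(q,r)$ satisfying $\frac{2}{q}=d(\frac12-\frac1r)$ and then identifies, from \eqref{strichartz:1d}, the multiplier
\[
  A(\lambda) = \mu^{-\frac{1}{2}} \lambda^{\frac{d}{2}-1} \jbrack{\sqrt{\mu}\lambda}^{\frac{d}{4}+1}
\]
to be inserted into \eqref{GenericLocalised}. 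The operators $A(|D|)^{\frac12-\frac1r}$ and $\rho(|D|)^{\pm\frac1q}$, $\rho(|D|)^{1-\frac1q}$ appearing in Theorem~\ref{thm:refined_strichartz} then become explicit combinations of $|D|$, $J_\mu$, and $\mu$; recalling $J^s_\mu=\jbrack{\sqrt\mu D}^s$, the Bessel-type factor $\jbrack{\sqrt\mu\lambda}^{(\frac{d}{4}+1)(\frac12-\frac1r)}$ is absorbed into the $J_\mu^{5/8}$ (for $d=1$) resp.\ $J_\mu^{3/4}$ (for $d=2$) weights claimed in \eqref{Refined:1d}--\eqref{Refined:2d}, after sending $r\to\infty$ so that $\frac12-\frac1r\to\frac12$ and $(\frac{d}{4}+1)\cdot\frac12$ equals $\frac58$ when $d=1$ and $\frac34$ when $d=2$.

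Next I would choose $\rho(\lambda)$ as a power of $\lambda$ (times a power of $T$, and a power of $\mu$), subject to the constraint $\rho(\lambda)\in(0,T]$, so that when the weight $\rho(|D|)^{-1/q}A(|D|)^{\frac12-\frac1r}$ hits $u$ in the $L^\infty_T L^2_x$ norm, the net power of $|D|$ matches the derivative count $\theta+\gamma$ plus the $|D|^{-1/2+3\theta/2}$ (for $d=1$) or $|D|^{1/2-\theta/3}\cdot$(extra $J^{2\theta}$) (for $d=2$) that appear on the right-hand sides; simultaneously the power of $|D|$ multiplying $F$ should match $|D|^{-1}$ (for $d=1$) or $|D|^{-1/2-\theta/3}$ (for $d=2$). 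This pins down the exponent in $\rho(\lambda)=\lambda^{-a}$: the two constraints (one from the $u$ term, one from the $F$ term) are consistent precisely because of the relation $\frac{2}{q}=d(\frac12-\frac1r)$ between $q$ and $r$. Then $\omega$ is chosen so that the low-frequency contribution $\omega^{d(\frac12-\frac1{\tilde r})}T^{1/2}\|J^{\tilde\theta}P_{\le\omega}u\|_{L^\infty_T L^2_x}$ and the high-frequency contribution have comparable powers of $T$; equating the $T$-exponents $\tfrac12$ (low) against $\tfrac12+(\tfrac12-\tfrac1q)$-type exponents coming from $\rho$ in the high part forces $\omega$ to be a specific power of $T$ (and of $\mu$), which after optimization yields the exponents $\tfrac{3}{10}+\tfrac{2\theta}{5}$ for $d=1$ and $\tfrac16+\tfrac{5\theta}{18}$ for $d=2$ in the final display, and correspondingly $\mu^{-1/5+2\theta/5}$ and $\mu^{-1/6+5\theta/36}$. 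The arbitrarily small $\gamma>0$ from Theorem~\ref{thm:refined_strichartz} gets merged into the arbitrarily small $\theta>0$.

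The main obstacle I expect is purely bookkeeping: keeping track of the three independent powers (of $|D|$, of $\mu$, and of $T$) in each of the three terms of \eqref{GenericRefined} simultaneously, under the single-parameter families $\omega=\omega(T,\mu)$ and $\rho(\lambda)=\lambda^{-a}(\cdots)$, and verifying that the constraint $\rho(\lambda)\le T$ does not bite (i.e.\ that the relevant frequencies $\lambda>\omega$ indeed give $\rho(\lambda)\le T$ for the chosen $\omega$). There is also a small subtlety in that Theorem~\ref{thm:refined_strichartz} is stated for real scalar $u$ solving the Schr\"odinger-type equation \eqref{eq:whitham_F_1} with $M(D)=-\frac1{\sqrt\mu}m_d(\sqrt\mu D)$, so one must check that the hypotheses — in particular that $M(\xi)\in\mathbb R$ and that \eqref{GenericLocalised} holds with the stated $A(\lambda)$ — are met; both are immediate from \eqref{eq:m_1}--\eqref{eq:m_2} and Lemma~\ref{strichartz:lemma}. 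Once the exponents are computed, the estimate \eqref{Refined:1d}--\eqref{Refined:2d} follows by substituting the chosen $\omega$ and $\rho$ into \eqref{GenericRefined} and simplifying; the flexibility in $\tilde r,\tilde\theta,\theta,\gamma$ is used only to make the derivative losses arbitrarily small, which is why all exponents carry the harmless $\theta$-dependent corrections.
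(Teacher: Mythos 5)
Your strategy is exactly the paper's: insert the frequency-localized bound of Lemma \ref{strichartz:lemma} into Theorem \ref{thm:refined_strichartz} with $A(\lambda)=\mu^{-\frac12}\lambda^{\frac d2-1}\jbrack{\sqrt\mu\lambda}^{\frac d4+1}$, take $\rho(\lambda)\sim\lambda^{-1}$ times powers of $T$ and $\mu$, let the constraint $\rho(\lambda)\le T$ dictate $\omega$, and absorb $\jbrack{\sqrt\mu\lambda}^{(\frac d4+1)(\frac12-\frac1r)}$ into $J_\mu^{5/8}$ resp.\ $J_\mu^{3/4}$ as $r\to\infty$. Your identification of these weights and of the role of $\gamma,\theta$ is correct.

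One concrete error: for $d=1$ the admissibility relation \eqref{eq:hyp_qr} reads $\frac2q=\frac12-\frac1r$, so $r\to\infty$ forces $q\to 4$, and $q$ near $2$ is not admissible at all in one dimension (it would require $\frac1r=-\frac12$). The paper accordingly takes $\frac1q=\frac14-\frac\theta4$, $\frac1r=\frac\theta2$ for $d=1$ (your $q=2^+$ is right only for $d=2$). This is not cosmetic: the exponents $\rho(|D|)^{-\frac1q}$ and $\rho(|D|)^{1-\frac1q}$ in \eqref{GenericRefined}, and hence the final powers $T^{\frac3{10}+\frac{2\theta}5}$, $\mu^{-\frac15+\frac{2\theta}5}$, come out only with $q\approx4$; with $q\approx2$ the bookkeeping for $d=1$ would not close. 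A second, smaller point: the factor $|D|^{-\frac12+\frac{3\theta}2}$ in \eqref{Refined:1d} is not produced by matching derivatives in the high-frequency $u$-term, as you suggest; it arises from an extra Bernstein step on the \emph{low-frequency} piece, $\norm{J^\theta P_{\le\omega}u}_{L_T^\infty L_x^2}\lesssim\omega^{\frac12-\frac{3\theta}2}\norm{J^\theta|D|^{-\frac12+\frac{3\theta}2}u}_{L_T^\infty L_x^2}$, performed precisely to equalize the powers of $\mu$ and $T$ between the low- and high-frequency contributions. With these two corrections your plan reproduces the paper's proof.
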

\begin{proof}
    We split the proof into two parts: one for $d=1$ and another for $d=2$.
    \subsubsection*{\underline{Proof of \eqref{Refined:1d}}}
    We start by fixing $q=4^+$ and $r=\infty^-$. More precisely, for $\theta > 0$ arbitrarily small we take $\frac{1}{q} = \frac{1}{4}-\frac{\theta}{4}$ and $\theta=\frac{2}{r}$.
    Then from Lemma \ref{strichartz:lemma} with
\[
  A(\lambda) = A_{\mu,1}(\lambda)
  =
  \mu^{-\frac{1}{2}} \lambda^{-\frac{1}{2}} \jbrack{\sqrt{\mu} \lambda}^{\frac{5}{4}},
\]
the bound \eqref{GenericRefined} now becomes
\begin{align*}
 \norm{u}_{L_T^2 L_x^\infty}
 &\lesssim
 \omega^{\frac{1}{2}-\frac{\theta}{2}} T^{\frac{1}{2}} \norm{J^\theta P_{\le \omega} u}_{L_T^\infty L_x^2}
 \\
 &+ \omega^{-\theta} \mu^{-\frac{1}{4}+\frac{\theta}{4}} T^{\frac{1}{2}} \norm{\rho(\abs{D})^{-\frac{1}{4}+\frac{\theta}{4}} \abs{D}^{-\frac{1}{4}+\frac{\theta}{4}} J^{2\theta} J_\mu^{\frac{5}{8}}  u}_{L_T^\infty L_x^2}
  \\
  &+ \mu^{-\frac{1}{4}+\frac{\theta}{4}} \norm{\rho(\abs{D})^{\frac{3}{4}+\frac{\theta}{4}} \abs{D}^{-\frac{1}{4}+\frac{\theta}{4}} J^\theta J_\mu^{\frac{5}{8}} F}_{L_T^2 L_x^2}.
\end{align*}
We take $\rho(\lambda) = T^{\frac{4}{5}} \mu^{-\frac{1}{5}} \lambda^{-1}$. To get $\rho \le T$, we need $\lambda \ge \mu^{-\frac{1}{5}} T^{-\frac{1}{5}}$, so we choose
\[
  \omega = (\mu T)^{-\frac{1}{5}}.
\]
This yields
\begin{align*}
 \norm{u}_{L_T^2 L_x^\infty}
 &\lesssim
 \mu^{-\frac{1}{10}+\frac{\theta}{10}} T^{\frac{2}{5}+\frac{\theta}{10}} \norm{J^\theta P_{\le \omega} u}_{L_T^\infty L_x^2}
 \\
 &+ \mu^{-\frac{1}{5}+\frac{2\theta}{5}} T^{\frac{3}{10}+\frac{2\theta}{5}} \norm{J^{2\theta} J_\mu^{\frac{5}{8}}  u}_{L_T^\infty L_x^2}
  \\
  &+ \mu^{-\frac{2}{5}+\frac{\theta}{5}} T^{\frac{3}{5}+\frac{\theta}{5}} \norm{J^{\theta} \abs{D}^{-1} J_\mu^{\frac{5}{8}} F}_{L_T^2 L_x^2}.
\end{align*}
To match the powers of $\mu$ and $T$ in the first two terms on the right-hand side, we estimate
\[
   \norm{J^\theta P_{\le \omega} u}_{L_T^\infty L_x^2} = \norm{J^\theta \abs{D}^{\frac{1}{2}-\frac{3\theta}{2}} \abs{D}^{-\frac{1}{2}+\frac{3\theta}{2}} P_{\le \omega} u}_{L_T^\infty L_x^2} \lesssim \omega^{\frac{1}{2}-\frac{3\theta}{2}} \norm{J^\theta \abs{D}^{-\frac{1}{2}+\frac{3\theta}{2}} u}_{L_T^\infty L_x^2} 
\]
so that finally
\begin{align*}%\label{Refined:1d}
 \norm{u}_{L_T^2 L_x^\infty}
 &\lesssim
 \mu^{-\frac{1}{5}+\frac{2\theta}{5}} T^{\frac{3}{10}+\frac{2\theta}{5}} \norm{J^\theta \abs{D}^{-\frac{1}{2}+\frac{3\theta}{2}} u}_{L_T^\infty L_x^2}
 \\
 &+ \mu^{-\frac{1}{5}+\frac{2\theta}{5}} T^{\frac{3}{10}+\frac{2\theta}{5}} \norm{J^{2\theta} J_\mu^{\frac{5}{8}}  u}_{L_T^\infty L_x^2}
  \\
  &+ \mu^{-\frac{2}{5}+\frac{\theta}{5}} T^{\frac{3}{5}+\frac{\theta}{5}} \norm{ J^{\theta}\abs{D}^{-1} J_\mu^{\frac{5}{8}} F}_{L_T^2 L_x^2}
\end{align*}
for $\theta > 0$ sufficiently small.
\subsubsection*{\underline{Proof of \eqref{Refined:2d}}}
We choose $q= 2^+$ and $r= \infty^-$. So, for $\theta > 0$ arbitrarily small, we take $\frac{1}{q} = \frac{1}{2}-\frac{\theta}{3}$ and $\theta=\frac{3}{r}$. We also let $\frac{1}{\Tilde{r}}=\frac{5\theta }{12}$ and $\Tilde{\theta} =\theta$. Then from Lemma \ref{strichartz:lemma}, we have
\[
  A(\lambda) = A_{\mu,2}(\lambda)
  =
  \mu^{-\frac{1}{2}} \jbrack{\sqrt{\mu} \lambda}^{\frac{3}{2}}.
\]
Consequently, the bound \eqref{GenericRefined} gives us
\begin{align*}
 \norm{u}_{L_T^2 L_x^\infty}
 &\lesssim
 \omega^{1-\frac{5\theta}{6}} T^{\frac{1}{2}} \norm{J^{\theta} P_{\le \omega} u}_{L_T^\infty L_x^2}
 \\
 &+ \omega^{-\frac{\theta}{6}} \mu^{-\frac{1}{4}+\frac{\theta}{6}} T^{\frac{1}{2}} \norm{\rho(\abs{D})^{-\frac{1}{2}+\frac{\theta}{3}} J^{2\theta} J_\mu^{\frac{3}{4}}  u}_{L_T^\infty L_x^2}
  \\
 & + \mu^{-\frac{1}{4}+\frac{\theta}{6}} \norm{\rho(\abs{D})^{\frac{1}{2}+\frac{\theta}{3}} J^\theta J_\mu^{\frac{3}{4}} F}_{L_T^2 L_x^2}.
\end{align*}
We let $\rho(\lambda) = T^{\frac{2}{3}} \mu^{-\frac{1}{6}} \lambda^{-1}$. Then it requires $\lambda \ge \mu^{-\frac{1}{6}} T^{-\frac{1}{3}}$ to maintain $\rho \le T$, so we take
\[
  \omega = \mu^{-\frac{1}{6}} T^{-\frac{1}{3}}.
\]
This gives us
\begin{align*}
 \norm{u}_{L_T^2 L_x^\infty}
& \lesssim
 \mu^{-\frac{1}{6} + \frac{5\theta}{36}} T^{\frac{1}{6} + \frac{5\theta}{18}} \norm{J^{\theta} u}_{L_T^\infty L_x^2}
 \\
 &+ \mu^{-\frac{1}{6}+\frac{5\theta}{36}} T^{\frac{1}{6}+\frac{5\theta}{18}} \norm{\abs{D}^{\frac{1}{2}-\frac{\theta}{3}} J^{2\theta} J_\mu^{\frac{3}{4}}  u}_{L_T^\infty L_x^2}
  \\
  &+ \mu^{-\frac{1}{3}+\frac{\theta}{9}} T^{\frac{1}{3}+\frac{2\theta}{9}} \norm{\abs{D}^{-\frac{1}{2}-\frac{\theta}{3}} J^\theta J_\mu^{\frac{3}{4}} F}_{L_T^2 L_x^2}
\end{align*}
for $\theta > 0$ small enough.

\end{proof}

\section{The Whitham equation} \label{sec:Whitham}
In this section we prove Theorem \ref{thm:wellposedness_Whitham}, %We start by introducing key estimates (Sections \ref{sec:energy_whitham}--\ref{sec:refined_str_whitham}). %This will then come together in Section \ref{sec:proof_main_res_two_dims}. 
where we wish to extend the lifespan from $1/\epsilon$, achieved in \cite{Klein2018}, to
\begin{align}\label{eq:w_time_1}
    T = c^{-1} \epsilon^{-1} \Big(\frac{\mu}{\epsilon}\Big)^{\frac{1}{4}^-}
\end{align}
for some $c= c(\norm{\eta_0}_{H^s_x})>0$. This will be enough to prove Theorem \ref{thm:wellposedness_Whitham}, since the uniqueness, continuous dependence on the data, and the persistence at higher regularity, on the other hand, are local-in-time properties and follows form the existing local well-posedness result (stated in Proposition \ref{prop:existence_whitham} below); see Remarks \ref{rem:w_rem1}--\ref{rem:w_rem3} for more details. %It therefore suffices to prove that the solution exists on the timescale \eqref{eq:w_time_1}

To prove that the solution exists on the timescale \eqref{eq:w_time_1}, we need the standard energy estimate and a refined Strichartz estimate deduced in Sections \ref{sec:energy_whitham}--\ref{sec:refined_str_whitham}, respectively.

Throughout this paper, we will need the classical Kato-Ponce estimates.
\begin{prop}[Kato-Ponce estimates \cite{Kato1988}]
    Let $s \geq 0$, $p\in (1,\infty)$, $f\in H^s(\R^d)$, and $g\in H^{s-1}(\R^d)$.  Then
    \begin{align}\label{eq:kato_ponce}
        \norm{[J^s,f] g}_{L^p_x} \lesssim \norm{\nabla f}_{L^\infty_x} \norm{J^{s-1} g}_{L^p_x} + \norm{J^s f}_{L^p_x} \norm{ g}_{L^\infty_x}.
    \end{align}
    Moreover,  let $p_1,p_4 \in (1,\infty]$ and $p_2,p_3\in (1,\infty)$ be such that $\frac{1}{p}= \frac{1}{p_1} + \frac{1}{p_2} = \frac{1}{p_3} + \frac{1}{p_4}$. Then
    \begin{align}\label{eq:kato_ponce_ineq}
        \norm{J^s(f g)}_{L^p_x} \lesssim \norm{f}_{L^{p_1}_x} \norm{J^{s} g}_{L^{p_2}_x}  + \norm{J^s f}_{L^{p_3}_x} \norm{ g}_{L^{p_4}_x}
    \end{align}
    for $f,g\in H^s(\R^d)$.
\end{prop}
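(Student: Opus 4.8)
This is the classical Kato--Ponce fractional Leibniz rule \eqref{eq:kato_ponce_ineq} together with the Kato--Ponce commutator estimate \eqref{eq:kato_ponce}; the cleanest route is to reduce both to the Coifman--Meyer multiplier theorem through a Littlewood--Paley / Bony paraproduct decomposition, although one may equally well simply invoke \cite{Kato1988} (or the later refinements of Kenig--Ponce--Vega and Grafakos--Oh). The plan is as follows. With the usual projections $\Delta_j$ and $S_j = \sum_{k\le j-2}\Delta_k$, write
\[
 fg = T_f g + T_g f + R(f,g), \qquad T_f g = \sum_j S_j f\,\Delta_j g, \quad R(f,g) = \sum_{|j-k|\le 2}\Delta_j f\,\Delta_k g ,
\]
so that $T_f g$ and $T_g f$ are the ``low--high'' and ``high--low'' interactions and $R(f,g)$ is the ``high--high'' (diagonal) one.

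For \eqref{eq:kato_ponce_ineq} one observes that on $T_f g$ the output frequency is carried by $g$, and the bilinear symbol $\langle\xi+\eta\rangle^s\langle\eta\rangle^{-s}$ restricted to $|\xi|\lesssim|\eta|$ is a Coifman--Meyer symbol of order $0$; hence $\norm{J^s T_f g}_{L^p_x} \lesssim \norm{f}_{L^{p_1}_x}\norm{J^s g}_{L^{p_2}_x}$, and symmetrically $\norm{J^s T_g f}_{L^p_x}\lesssim \norm{J^s f}_{L^{p_3}_x}\norm{g}_{L^{p_4}_x}$. For \eqref{eq:kato_ponce} the only piece where cancellation is needed is again the low--high one: $J^s T_f g - T_f J^s g$ has symbol $\langle\xi+\eta\rangle^s - \langle\eta\rangle^s$ on $|\xi|\ll|\eta|$, which by a first-order Taylor expansion equals $\xi\cdot b(\xi,\eta)$ with $b$ a Coifman--Meyer symbol of order $s-1$; this produces precisely $\norm{\nabla f}_{L^\infty_x}\norm{J^{s-1}g}_{L^p_x}$, while the high--low piece contributes $\norm{J^s f}_{L^p_x}\norm{g}_{L^\infty_x}$ with no cancellation. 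In both estimates the frequency-localized bounds are reassembled using the Littlewood--Paley square function together with the Fefferman--Stein vector-valued maximal inequality.

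The main obstacle is the high--high term $R(f,g)$: there the output frequency is no longer comparable to either factor, so there is no automatic derivative gain. One exploits $s\ge 0$ through $\langle\xi+\eta\rangle^s \lesssim \langle\xi\rangle^s + \langle\eta\rangle^s$ to distribute the derivative onto either factor, then applies Bernstein's lemma to the (lower-frequency) output of each diagonal block and recovers $\ell^2$-summability in the dyadic index via the square function, bounding the result by the right-hand side of \eqref{eq:kato_ponce_ineq} (respectively by $\norm{J^s f}_{L^p_x}\norm{g}_{L^\infty_x}$ in the commutator case). The endpoint cases $p_1=\infty$ or $p_4=\infty$ require extra care, since the maximal inequality is unavailable on an $L^\infty$ factor; however, on every block where such a factor appears it sits at a frequency no higher than that of the output, so it can be estimated directly by its sup-norm. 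Once these blocks are assembled, \eqref{eq:kato_ponce} and \eqref{eq:kato_ponce_ineq} follow.
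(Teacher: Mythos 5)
The paper does not prove this proposition; it simply states it as a classical result and cites \cite{Kato1988} (the estimates are indeed those of Kato--Ponce, with the bilinear Leibniz form usually attributed to Kenig--Ponce--Vega and Grafakos--Oh). There is therefore no "paper proof" to compare against, and your outline is exactly the standard Bony-paraproduct / Coifman--Meyer argument that one would find in the literature.

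As a sketch, your proposal is essentially sound: the decomposition into $T_f g + T_g f + R(f,g)$, the identification of the only cancellation as occurring in the low--high piece of the commutator (where $\langle\xi+\eta\rangle^s - \langle\eta\rangle^s$ is Taylor-expanded to order one in the small variable $\xi$, producing a symbol of order $s-1$ that is paired with $\nabla f$), the treatment of $T_g f$ via $\langle\xi+\eta\rangle^s \sim \langle\eta\rangle^s$ with no cancellation needed, and the use of $s\ge 0$ via $\langle\xi+\eta\rangle^s \lesssim \langle\xi\rangle^s + \langle\eta\rangle^s$ on the resonant piece are all correct. One small imprecision: in the high--high block of the commutator estimate, the factor that ends up in $L^\infty$ (namely $g$) sits at a frequency \emph{comparable to or higher than} the output, not lower as you state. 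This does not cause a problem -- one simply pulls it out blockwise via $\norm{\Delta_j g}_{L^\infty_x} \lesssim \norm{g}_{L^\infty_x}$ without invoking the maximal function, and the summability in the dyadic index is then recovered from the derivative gain on $f$ (and from the fact that the case $s=0$ of the commutator estimate is trivial, since $[J^0,f]=0$) -- but the reason the $L^\infty$ factor is harmless there is not the one you give. Also, for the commutator one must account not only for $J^s T_f g - T_f J^s g$ but also for the companion differences $J^s T_g f - T_{J^s g} f$ and $J^s R(f,g) - R(f,J^s g)$ coming from decomposing $f\,J^s g$ as well as $J^s(fg)$; all of these fall into the $\norm{J^s f}_{L^p_x}\norm{g}_{L^\infty_x}$ bucket, but they should be listed. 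These are routine bookkeeping points; the structure of the argument is right.
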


\subsection{Energy estimate for the Whitham equation}\label{sec:energy_whitham} 
We define the energy for the Whitham equation \eqref{eq:Whitham} to be
\begin{align*}
    \mathcal{E}_s(\eta) = \int_\R \Big(J^s \eta \Big)^2 \d x,
\end{align*}
which allows us to deduce the following estimate, under some additional regularity.

\begin{prop}\label{prop:energy1_whitham}
    Let $s>\frac{3}{2}$ and $\epsilon,\mu \in (0,1]$. Let $r \ge s+1$ and $T > 0$. Then for any solution $\eta \in C([0,T];H^r(\R))$ to \eqref{eq:Whitham} on the time interval $[0,T]$ we have
     \begin{align}\label{eq:energy_est_whitham}
         \frac{\d}{\d t} (\norm{\eta(t)}_{H^s_x}^2) \le c  \epsilon \norm{\partial_x \eta(t)}_{L^\infty_x} \norm{\eta(t)}_{H^s_x}^2
     \end{align}
     for $t\in(0,T)$.
\end{prop}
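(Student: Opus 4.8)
The strategy is the classical energy argument for a symmetrizable quasilinear equation: apply $J^s$ to the equation, pair with $J^s\eta$ in $L^2_x$, and estimate the commutator term via Kato--Ponce. Concretely, I would differentiate $\mathcal{E}_s(\eta) = \norm{J^s\eta}_{L^2_x}^2$ in time and substitute $\partial_t\eta = -\sqrt{\mathcal{T}_\mu}(D)\partial_x\eta - \frac{\epsilon}{2}\partial_x(\eta^2)$ from \eqref{eq:Whitham}. This requires $\eta \in C([0,T];H^r(\R))$ with $r \ge s+1$ so that all the terms below make sense (in particular $\partial_t\eta \in C([0,T];H^{r-1}) \subset C([0,T];H^s)$ since $\sqrt{\mathcal{T}_\mu}(D)$ is of order $0$ but $\partial_x$ costs one derivative, and the nonlinear term is handled by $H^{r-1}$ being an algebra-type space for $r-1 > 1/2$).

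**Key steps, in order.** First, $\frac{\d}{\d t}\mathcal{E}_s(\eta) = 2\big(J^s\partial_t\eta,\, J^s\eta\big)_{L^2_x}$. The linear dispersive contribution is $-2\big(J^s\sqrt{\mathcal{T}_\mu}(D)\partial_x\eta,\, J^s\eta\big)_{L^2_x}$; since $\sqrt{\mathcal{T}_\mu}(D)$ and $J^s$ are real Fourier multipliers and $\partial_x$ is skew-adjoint, this term vanishes (integrate by parts / use Plancherel: the symbol $\sqrt{\mathcal{T}_\mu(\xi)}\,(i\xi)\,\jbrack{\xi}^{2s}$ is purely imaginary, so its contribution is zero). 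Second, for the nonlinear term write
\begin{align*}
  -\epsilon\big(J^s\partial_x(\eta^2),\, J^s\eta\big)_{L^2_x}
  &= -2\epsilon\big(J^s\partial_x(\eta\cdot\eta),\,J^s\eta\big)_{L^2_x}\cdot\tfrac12,
\end{align*}
and split $J^s\partial_x(\eta^2) = \partial_x\big([J^s,\eta]\eta\big) + \partial_x\big(\eta\, J^s\eta\big)$ (using that $J^s$ commutes with $\partial_x$), actually more cleanly: write $J^s(\eta\,\partial_x\eta) = [J^s,\eta]\partial_x\eta + \eta\, J^s\partial_x\eta$ after noting $\partial_x(\eta^2) = 2\eta\,\partial_x\eta$. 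The first piece is controlled by Kato--Ponce \eqref{eq:kato_ponce}: $\norm{[J^s,\eta]\partial_x\eta}_{L^2_x} \lesssim \norm{\partial_x\eta}_{L^\infty_x}\norm{J^{s-1}\partial_x\eta}_{L^2_x} + \norm{J^s\eta}_{L^2_x}\norm{\partial_x\eta}_{L^\infty_x} \lesssim \norm{\partial_x\eta}_{L^\infty_x}\norm{\eta}_{H^s_x}$, so this contributes $\lesssim \epsilon\,\norm{\partial_x\eta}_{L^\infty_x}\norm{\eta}_{H^s_x}^2$. For the second piece $\big(\eta\, J^s\partial_x\eta,\, J^s\eta\big)_{L^2_x}$, integrate by parts in $x$ to move $\partial_x$ off $J^s\eta$: $\int \eta\,(\partial_x J^s\eta)(J^s\eta)\,\d x = \frac12\int\eta\,\partial_x\big((J^s\eta)^2\big)\,\d x = -\frac12\int(\partial_x\eta)(J^s\eta)^2\,\d x$, bounded by $\norm{\partial_x\eta}_{L^\infty_x}\norm{\eta}_{H^s_x}^2$. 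Collecting and absorbing constants gives \eqref{eq:energy_est_whitham}.

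**Main obstacle.** There is no serious obstacle here — this is a textbook energy estimate. The only points requiring a little care are: (i) justifying the differentiation under the integral sign and the integration-by-parts manipulations, which is why the hypothesis $r \ge s+1$ is imposed (it guarantees $\partial_t\eta \in C([0,T];H^s)$ and enough smoothness/decay for all boundary terms at infinity to vanish); (ii) making sure the commutator is written so that exactly one derivative hits $\eta$ in the $L^\infty$ factor — one must use $\partial_x(\eta^2) = 2\eta\partial_x\eta$ rather than leaving the derivative outside, so that Kato--Ponce produces $\norm{\partial_x\eta}_{L^\infty_x}$ and not $\norm{\eta}_{L^\infty_x}$ times a top-order norm of $\partial_x\eta$ (which would lose a derivative). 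The $\mu$-dependence plays no role: $\sqrt{\mathcal{T}_\mu}(D)$ is a bounded, real, even multiplier, so the dispersive term drops out uniformly in $\mu \in (0,1]$, and the constant $c$ in \eqref{eq:energy_est_whitham} is independent of both $\epsilon$ and $\mu$.
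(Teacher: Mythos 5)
Your proposal is correct and follows essentially the same route as the paper: the dispersive term drops out by skew-symmetry, and the nonlinear term is handled by the decomposition $J^s(\eta\partial_x\eta)=[J^s,\eta]\partial_x\eta+\eta J^s\partial_x\eta$, with Kato--Ponce for the commutator and integration by parts for the remaining piece. No gaps.
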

\begin{proof}
    From \eqref{eq:Whitham}, we have 
    \begin{align*}
         \frac{1}{2}\frac{\d}{\d t} (\norm{\eta}_{H^s_x}^2) &= \scalarprod{J^s \eta}{J^s \partial_t \eta}_{L^2_x} \\
         &= - \scalarprod{J^s \eta}{J^s \sqrt{\mathcal{T}_\mu}(D) \partial_x \eta}_{L^2_x} - \epsilon \scalarprod{J^s \eta}{J^s (\eta\partial_x \eta)}_{L^2_x}.
    \end{align*}
    Here, the first term is skew-symmetric, and the second term can be estimated by H\"{o}lder's inequality, the Kato-Ponce commutator estimate \eqref{eq:kato_ponce}, and integration by parts. So, we obtain
    \begin{align*}
       \frac{\d}{\d t} (\norm{\eta}_{H^s_x}^2) % &\leq \epsilon | \scalarprod{J^s \eta}{J^s (\eta\partial_x \eta)}_{L^2_x}|\\
       &\leq 2\epsilon |\scalarprod{J^s \eta}{[J^s ,\eta]\partial_x \eta}_{L^2_x}| + 2\epsilon |\scalarprod{J^s \eta}{\eta J^s \partial_x \eta}_{L^2_x}|\\
        &\le c \epsilon \norm{\partial_x \eta}_{L^\infty_x} \norm{\eta}^2_{H^s_x}
    \end{align*}
    as desired.
\end{proof}

Via Gr\"onwall's lemma and approximation we then obtain the following bound.
\begin{cor}\label{cor:energy1_whitham}
    Let $s>\frac{3}{2}$, $T > 0$ and $\epsilon,\mu \in (0,1]$. Then for any solution $\eta \in C([0,T];H^s(\R))$ to \eqref{eq:Whitham} on $[0,T]$ with $\eta(0)=\eta_0$ we have
     \begin{equation}\label{eq:gronwall_whitham}
         \norm{\eta(t)}_{H^s_x}^2
         \le
         \mathrm{e}^{c \epsilon \int_0^t \norm{\partial_x \eta(\tau)}_{L^\infty_x} \!\d \tau} \norm{\eta_0}_{H^s_x}^2
     \end{equation}
     for $t\in [0,T]$.
\end{cor}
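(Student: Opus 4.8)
The statement to prove is Corollary \ref{cor:energy1_whitham}, deriving the Grönwall-type bound \eqref{eq:gronwall_whitham} from the differential energy inequality \eqref{eq:energy_est_whitham} of Proposition \ref{prop:energy1_whitham}. The subtlety is that Proposition \ref{prop:energy1_whitham} requires extra regularity $\eta \in C([0,T];H^r(\R))$ with $r \ge s+1$, while the corollary only assumes $\eta \in C([0,T];H^s(\R))$, so a direct application of Grönwall is not available and an approximation argument is needed.

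\textbf{Plan.} The proof proceeds in two stages. First, suppose in addition that $\eta \in C([0,T];H^r(\R))$ for some $r \ge s+1$. Then Proposition \ref{prop:energy1_whitham} applies and gives
\[
\frac{\d}{\d t}\bigl(\norm{\eta(t)}_{H^s_x}^2\bigr) \le c\epsilon \norm{\partial_x \eta(t)}_{L^\infty_x} \norm{\eta(t)}_{H^s_x}^2
\]
for $t \in (0,T)$. Since $r \ge s+1 > \frac{3}{2}+1$, Sobolev embedding ensures $\norm{\partial_x\eta(\cdot)}_{L^\infty_x}$ is continuous, hence locally integrable in $t$, so the scalar function $y(t) := \norm{\eta(t)}_{H^s_x}^2$ is absolutely continuous and satisfies $y'(t) \le g(t) y(t)$ with $g(t) := c\epsilon\norm{\partial_x\eta(t)}_{L^\infty_x} \ge 0$. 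Grönwall's inequality (in integral form) then yields
\[
y(t) \le y(0)\exp\Bigl(\int_0^t g(\tau)\,\d\tau\Bigr),
\]
which is exactly \eqref{eq:gronwall_whitham} in the higher-regularity case.

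\textbf{Removing the extra regularity.} For the second stage, take a general solution $\eta \in C([0,T];H^s(\R))$ with $\eta(0) = \eta_0$. Approximate $\eta_0$ by a sequence $\eta_0^{(n)} \in H^\infty(\R) = \bigcap_r H^r(\R)$ with $\eta_0^{(n)} \to \eta_0$ in $H^s(\R)$; e.g. set $\eta_0^{(n)} = P_{\le n}\eta_0$. By the local well-posedness result for the Whitham equation (Proposition \ref{prop:existence_whitham} below, which provides persistence of regularity and continuous dependence on the data), there is a uniform time interval on which the solutions $\eta^{(n)}$ with data $\eta_0^{(n)}$ exist in $C(\cdot;H^r(\R))$ for every $r$, and $\eta^{(n)} \to \eta$ in $C([0,T'];H^s(\R))$ for $T'$ up to the existence time of $\eta$; a standard continuation/bootstrap covers all of $[0,T]$. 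Apply the first stage to each $\eta^{(n)}$ to get
\[
\norm{\eta^{(n)}(t)}_{H^s_x}^2 \le \exp\Bigl(c\epsilon\int_0^t \norm{\partial_x\eta^{(n)}(\tau)}_{L^\infty_x}\,\d\tau\Bigr)\norm{\eta_0^{(n)}}_{H^s_x}^2.
\]
Since $\eta^{(n)} \to \eta$ in $C([0,T];H^s(\R))$ and $H^s \hookrightarrow W^{1,\infty}$ for $s > \frac{3}{2}$, we have $\norm{\partial_x\eta^{(n)}(\tau)}_{L^\infty_x} \to \norm{\partial_x\eta(\tau)}_{L^\infty_x}$ uniformly in $\tau\in[0,T]$, so the integrals converge; also $\norm{\eta^{(n)}(t)}_{H^s_x} \to \norm{\eta(t)}_{H^s_x}$ and $\norm{\eta_0^{(n)}}_{H^s_x} \to \norm{\eta_0}_{H^s_x}$. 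Passing to the limit $n \to \infty$ gives \eqref{eq:gronwall_whitham} for the original solution.

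\textbf{Main obstacle.} The routine part is Grönwall; the only real point requiring care is the approximation argument — ensuring that the smoothed solutions $\eta^{(n)}$ exist on the full interval $[0,T]$ (not just a short time depending on $n$) and converge to $\eta$ in $C([0,T];H^s)$. This is handled by invoking the local well-posedness theory (Proposition \ref{prop:existence_whitham}): the existence time depends only on $\norm{\eta_0^{(n)}}_{H^s_x}$, which is bounded uniformly in $n$, and continuous dependence on the data plus a standard covering of $[0,T]$ by short subintervals propagates the convergence. Alternatively, one can avoid approximation entirely by using a Friedrichs mollifier $J_\delta$ directly in the energy identity and passing $\delta \to 0$, but invoking the already-available well-posedness result is cleaner.
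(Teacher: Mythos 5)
Your proposal is correct and follows essentially the same route as the paper: apply Proposition \ref{prop:energy1_whitham} together with Gr\"onwall to smooth solutions, then approximate the data, invoke Proposition \ref{prop:existence_whitham} (persistence of regularity and continuous dependence) to get convergence in $C([0,T];H^s)$ on short intervals which are then iterated, and pass to the limit using $H^s\hookrightarrow W^{1,\infty}$. No substantive differences.
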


\begin{proof}
First we notice that it suffices to prove \eqref{eq:gronwall_whitham} for an arbitrarily small $T > 0$, since this result can then be iterated to cover any time interval. By the local existence theorem, Proposition \ref{prop:existence_whitham} below, there exists a time $T > 0$ such that the following holds. Taking $r=s+1$ and choosing an approximating sequence $\eta_{0}^{(n)} \in H^r(\R)$ such that $\eta_{0}^{(n)} \to \eta_0$ in $H^s(\R)$ as $n \to \infty$, there exist corresponding solutions $\eta^{(n)}\in C([0,T];H^r(\R))$, and by the continuous dependence on the data we have that $\eta^{(n)} \to \eta$ in $C([0,T];H^s(\R))$. From Proposition \ref{prop:energy1_whitham} it follows that \eqref{eq:gronwall_whitham} holds with $\eta$ replaced by $\eta^{(n)}$, and passing to the limit $n \to \infty$ we get the inequality also for $\eta$.
\end{proof}

\subsection{Refined Strichartz estimate for the Whitham equation}\label{sec:refined_str_whitham}
To obtain the desired lifespan, we use Corollary \ref{cor:refined_strichartz_d_1} to deduce the following estimate. 
\begin{prop}\label{prop:boundedness_whitham}
    Let $\epsilon,\mu\in (0,1]$, $r>\frac{13}{8}$, and $T>0$. Let $\eta \in C([0,T];H^{r}(\R))$ be a solution to \eqref{eq:Whitham}. Then
    \begin{align}\label{eq:boundedness_whitham}
      \norm{\partial_x \eta}_{L^2_TL^\infty_x} &\lesssim  \mu^{(-\frac{1}{5})^+ }   T^{\frac{3}{10}^+}  \norm{\eta}_{L^\infty_TH^{r}_x} +  \epsilon  \mu^{(-\frac{2}{5})^+}  T^{\frac{11}{10}^+} \norm{\eta}_{L^\infty_T H^{r}_x}^2.
      \end{align}
\end{prop}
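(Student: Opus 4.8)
The plan is to apply the refined Strichartz estimate \eqref{Refined:1d} from Corollary \ref{cor:refined_strichartz_d_1} to $u = \partial_x\eta$ (or more precisely, deduce the bound on $\partial_x\eta$ by first applying it to $\eta$ and then using that $\partial_x$ commutes with all the relevant Fourier multipliers, gaining one power of $|D|$). First I would rewrite the Whitham equation \eqref{eq:Whitham} in the Schr\"odinger-type form \eqref{eq:equation_refined_strichartz}: setting $u = \eta$, the linear part $\sqrt{\mathcal{T}_\mu}(D)\partial_x$ is exactly the skew-adjoint operator with symbol $\tfrac{1}{\sqrt\mu}m_1(\sqrt\mu\xi)$ (up to the factor $i$ convention), so that $\eta$ solves \eqref{eq:equation_refined_strichartz} with $d=1$ and forcing $F = -\tfrac{\epsilon}{2}\partial_x(\eta^2) = -\epsilon\,\eta\,\partial_x\eta$. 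Then \eqref{Refined:1d} applies directly to $\eta$, and to get $\norm{\partial_x\eta}_{L^2_TL^\infty_x}$ I would either insert $|D|$ into the estimate (tracking one extra derivative on each term) or simply note $\norm{\partial_x\eta}_{L^2_TL^\infty_x} \lesssim \norm{|D|^{1+0^+}\eta}_{L^2_TL^\infty_x}$ via Bernstein/Sobolev and apply the corollary with a slightly larger $\theta$ to $|D|\eta$; the cleanest route is to apply \eqref{Refined:1d} with $u$ replaced by $\partial_x\eta$, which solves the same linear equation with forcing $\partial_x F$.

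The main steps are then: (1) Bound the two ``linear'' terms on the right of \eqref{Refined:1d} applied to $\partial_x\eta$. Each contains $\norm{\,\cdot\,}_{L^\infty_TH^{\,?}_x}$ of $\partial_x\eta$ hit by the operators $J^\theta|D|^{-1/2+3\theta/2}$ and $J_\mu^{5/8}$. Counting derivatives: $\partial_x\eta$ carries one derivative, then $|D|^{-1/2+3\theta/2}$ removes almost a half, and $J^\theta$, $J^{2\theta}$, or $J_\mu^{5/8}$ (note $J_\mu^{5/8} \lesssim J^{5/8}$ since $\mu\le 1$) add at most $5/8 + O(\theta)$; the worst count is about $1 + 5/8 + 0^+ = 13/8 + 0^+$ derivatives, which is exactly why the hypothesis $r > 13/8$ appears. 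So both linear terms are $\lesssim \mu^{(-1/5)^+}T^{(3/10)^+}\norm{\eta}_{L^\infty_TH^r_x}$, matching the first term of \eqref{eq:boundedness_whitham}. (2) Bound the forcing term: $F = -\tfrac{\epsilon}{2}\partial_x(\eta^2)$, so $\partial_x F = -\tfrac{\epsilon}{2}\partial_x^2(\eta^2)$, and the relevant quantity is $\epsilon\norm{|D|^{-1}J_\mu^{5/8}\,\partial_x^2(\eta^2)}_{L^2_TL^2_x}$ (picking up $\mu^{(-2/5)^+}T^{(3/5)^+}$ out front). The $|D|^{-1}$ cancels one of the two derivatives from $\partial_x^2$; then $\norm{|D| J_\mu^{5/8}(\eta^2)}_{L^2_x} \lesssim \norm{J^{1+5/8+0^+}(\eta^2)}_{L^2_x} \lesssim \norm{\eta}_{H^r_x}^2$ by the Kato–Ponce product estimate \eqref{eq:kato_ponce_ineq} (with $H^r \hookrightarrow L^\infty$, valid since $r > 13/8 > 1/2$), again requiring $r \ge 1 + 5/8 + 0^+ = 13/8 + 0^+$. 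Integrating in time over $[0,T]$ gives an extra $T^{1/2}$, producing $\epsilon\,\mu^{(-2/5)^+}T^{(3/5 + 1/2)^+}\norm{\eta}_{L^\infty_TH^r_x}^2 = \epsilon\,\mu^{(-2/5)^+}T^{(11/10)^+}\norm{\eta}_{L^\infty_TH^r_x}^2$, the second term of \eqref{eq:boundedness_whitham}. (3) Collect the two bounds.

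The main obstacle — really the only delicate point — is the bookkeeping of derivative counts and the exponents of $\mu$ and $T$: one must verify that $r > 13/8$ is precisely the threshold needed so that every instance of $J^{\theta'}|D|^{\sigma'}J_\mu^{5/8}$ acting on $\partial_x\eta$ or on $|D|J_\mu^{5/8}(\eta^2)$ is controlled by $\norm{\eta}_{L^\infty_TH^r_x}$ or $\norm{\eta}_{L^\infty_TH^r_x}^2$ with room to absorb the arbitrarily small $\theta$. In particular one needs $\theta$ chosen small enough (depending on how much $r$ exceeds $13/8$) that $1 + 5/8 + c\theta \le r$; the superscripts ${}^+$ and ${}^-$ in \eqref{eq:boundedness_whitham} encode exactly this freedom. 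A secondary point is handling low frequencies in the $|D|^{-1}$ and $|D|^{-1/2+3\theta/2}$ factors — since we have already split off $P_{\le\omega}$ inside the proof of Corollary \ref{cor:refined_strichartz_d_1} and the negative powers of $|D|$ there act on high-frequency pieces (or, for the forcing term, are killed by the $\partial_x^2$), no genuine low-frequency singularity arises, but this should be remarked. Everything else is a routine application of Sobolev embedding, Bernstein's lemma, the Kato–Ponce estimates \eqref{eq:kato_ponce}–\eqref{eq:kato_ponce_ineq}, and $J_\mu^\sigma \lesssim J^\sigma$ for $\mu \in (0,1]$, $\sigma \ge 0$.
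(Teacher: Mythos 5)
Your proposal is correct and follows essentially the same route as the paper: apply the refined Strichartz estimate \eqref{Refined:1d} to $\partial_x\eta$ (equivalently, to $\eta$ with the extra $|D|$ tracked through each term), use H\"older's inequality in time on the forcing term to trade $L^2_T$ for $T^{1/2}L^\infty_T$, and close with the Kato--Ponce product estimate \eqref{eq:kato_ponce_ineq} and Sobolev embedding, with the worst derivative count $1+\tfrac{5}{8}+O(\theta)$ explaining the threshold $r>\tfrac{13}{8}$. The exponent bookkeeping ($\mu^{(-1/5)^+}T^{(3/10)^+}$ for the linear terms, $\mu^{(-2/5)^+}T^{(3/5+1/2)^+}=\mu^{(-2/5)^+}T^{(11/10)^+}$ for the quadratic term) matches the paper's proof exactly.
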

\begin{proof}
    We observe that the Whitham equation \eqref{eq:Whitham} has the same form as \eqref{eq:equation_refined_strichartz} for $u=\eta$ and $F = -i\frac{\epsilon}{2}\partial_x( \eta^2)$. With these choices, we have from \eqref{Refined:1d} and H\"{o}lder's inequality in time that
    \begin{align*}
        \norm{\partial_x \eta}_{L_T^2 L_x^\infty}
         &\lesssim
         \mu^{-\frac{1}{5}+\frac{2\theta}{5}} T^{\frac{3}{10}+\frac{2\theta}{5}} \Big(\norm{J^\theta \abs{D}^{\frac{1}{2}+\frac{3\theta}{2}} \eta}_{L_T^\infty L_x^2} +  \norm{J_\mu^{\frac{5}{8}}  |D| \eta}_{L_T^\infty L_x^2}\Big)
          \\ \notag
          &+ \epsilon \mu^{-\frac{2}{5}+\frac{\theta}{5}} T^{\frac{11}{10}+\frac{\theta}{5}} \norm{ J_\mu^{\frac{5}{8}} |D|  (\eta^2)}_{L_T^\infty L_x^2}
    \end{align*}
    for sufficiently small $\theta >0$.  Then by the Kato-Ponce inequality \eqref{eq:kato_ponce_ineq} and a Sobolev embedding we obtain \eqref{eq:boundedness_whitham} 
    for $r>\frac{13}{8}$. 
\end{proof}

\subsection{Proof of the Theorem \ref{thm:wellposedness_Whitham}}\label{sec:proof_of_Whitham} 
We will now combine the estimates in Corollary \ref{cor:energy1_whitham} and Proposition \ref{prop:boundedness_whitham} with a bootstrap argument to prove Theorem \ref{thm:wellposedness_Whitham}.

\begin{proof}[Proof of the Theorem \ref{thm:wellposedness_Whitham}]
	Arguing as in \cite{Saut1979} and \cite{Bona1975}, one get the following local well-posedness result for the Whitham equation \eqref{eq:Whitham}:
    \begin{prop}\label{prop:existence_whitham}
        Let $s>\frac{3}{2}$, $\epsilon,\mu \in (0,1]$, and $\eta_0 \in H^{s}(\R)$. Then there exists a positive time $T = T(\norm{\eta_0}_{H^s_x})$, decreasing with respect to its arguement, such that there is a unique $\eta \in C([0, T ] ; H^s(\R))$ solving \eqref{eq:Whitham} with initial condition $\eta(0) = \eta_0$. Moreover,
        \begin{align*}
            \sup_{t\in [0,T]} \norm{\eta(t)}_{H^s_x} \lesssim\norm{\eta_0}_{H^s_x},
        \end{align*}
        and the flow map is continuously dependent on the initial datum.  Furthermore, higher regularity persists. That is, if $\eta_0 \in H^r(\R)$ for some $r > s$, then $\eta$ belongs to the space $C([0, T ] ; H^r(\R))$ (with the same $T$).
    \end{prop}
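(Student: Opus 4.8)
\emph{Strategy and a priori bound.} The plan is to follow the classical scheme of Bona--Smith \cite{Bona1975} and Saut \cite{Saut1979}: parabolic regularization to construct solutions, energy estimates for uniform bounds and uniqueness, and a mollification argument for strong continuity and continuous dependence. Observe first that the dispersive operator $\sqrt{\mathcal{T}_\mu}(D)\partial_x$ has a purely imaginary Fourier symbol, so it is skew-adjoint on $L^2(\R)$ and commutes with $J^s$; it therefore drops out of all energy identities, and \eqref{eq:Whitham} behaves, for well-posedness purposes, like a first-order quasilinear equation with quadratic nonlinearity $\frac{\epsilon}{2}\partial_x(\eta^2)$. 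Since $s>\frac32$ gives $H^{s-1}(\R)\hookrightarrow L^\infty(\R)$, the estimate of Proposition \ref{prop:energy1_whitham} reads $\frac{\d}{\d t}\norm{\eta}_{H^s_x}^2\le c\epsilon\norm{\partial_x\eta}_{L^\infty_x}\norm{\eta}_{H^s_x}^2\le c\epsilon\norm{\eta}_{H^s_x}^3$, so an ODE comparison yields a time $T\sim(\epsilon\norm{\eta_0}_{H^s_x})^{-1}$, decreasing in $\norm{\eta_0}_{H^s_x}$, on which any smooth solution satisfies $\sup_{t\in[0,T]}\norm{\eta(t)}_{H^s_x}\le 2\norm{\eta_0}_{H^s_x}$.

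\emph{Construction, uniqueness, and convergence.} Next I would build solutions by parabolic regularization: for $\delta\in(0,1]$, solve $\partial_t\eta_\delta+\sqrt{\mathcal{T}_\mu}(D)\partial_x\eta_\delta+\frac{\epsilon}{2}\partial_x(\eta_\delta^2)=\delta\partial_x^2\eta_\delta$ with $\eta_\delta(0)=\eta_0$. The associated linear semigroup is a contraction on every $H^s$ and gains a derivative with a $(\delta t)^{-1/2}$ weight, so a Duhamel fixed-point argument gives a unique $\eta_\delta\in C([0,T_\delta];H^s(\R))$, smooth for $t>0$. The dissipative term contributes $-2\delta\norm{\partial_xJ^s\eta_\delta}_{L^2_x}^2\le 0$ to the energy identity, so the a priori bound holds \emph{uniformly in $\delta$}; hence $T_\delta$ may be taken $\ge T$ and $\{\eta_\delta\}_\delta$ is bounded in $C([0,T];H^s(\R))$, with $\{\partial_t\eta_\delta\}_\delta$ bounded in $C([0,T];H^{s-2}(\R))$ by the equation. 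An $L^2$ estimate on a difference of two solutions, obtained by integrating $\partial_x(\eta_1^2-\eta_2^2)=\partial_x\bigl((\eta_1+\eta_2)(\eta_1-\eta_2)\bigr)$ by parts, gives $\frac{\d}{\d t}\norm{\eta_1-\eta_2}_{L^2_x}^2\lesssim\epsilon\bigl(\norm{\partial_x\eta_1}_{L^\infty_x}+\norm{\partial_x\eta_2}_{L^\infty_x}\bigr)\norm{\eta_1-\eta_2}_{L^2_x}^2$; applied to $\eta_\delta-\eta_{\delta'}$ it shows $\{\eta_\delta\}$ is Cauchy in $C([0,T];L^2(\R))$, and interpolating with the uniform $H^s$ bound yields $\eta_\delta\to\eta$ in $C([0,T];H^{s'}(\R))$ for every $s'<s$. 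The limit $\eta\in L^\infty([0,T];H^s)\cap C([0,T];H^{s'})$ solves \eqref{eq:Whitham}; the same difference estimate with equal data gives uniqueness in $C([0,T];H^s)$, and via the equation $\eta\in C^1([0,T];H^{s-1})$.

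\emph{Strong continuity, continuous dependence, persistence, and the main obstacle.} It remains to upgrade $\eta$ from $C_TH^{s'}\cap L^\infty_TH^s$ (that is, weak continuity in $H^s$) to $\eta\in C([0,T];H^s)$ and to establish continuous dependence on $\eta_0$; here I would run the Bona--Smith argument, mollifying the data $\eta_0^\nu=\phi_\nu\ast\eta_0$, letting $\eta^\nu$ be the corresponding smooth, uniformly $H^s$-bounded solutions, and combining the $L^2$ difference estimate with the elementary bounds $\norm{\eta_0-\eta_0^\nu}_{H^s_x}\to0$ and $\norm{\eta_0^\nu}_{H^{s+\sigma}_x}\lesssim\nu^{-\sigma}$ to conclude $\eta^\nu\to\eta$ in $C([0,T];H^s(\R))$, the same reasoning applied to two nearby data giving continuity of the flow map. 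Persistence of higher regularity then does not shrink the existence time: for $\eta_0\in H^r(\R)$, $r>s$, the computation behind Proposition \ref{prop:energy1_whitham} with $J^r$ in place of $J^s$ gives $\frac{\d}{\d t}\norm{\eta}_{H^r_x}^2\le c\epsilon\norm{\partial_x\eta}_{L^\infty_x}\norm{\eta}_{H^r_x}^2$, in which only $\norm{\partial_x\eta}_{L^\infty_x}\lesssim\norm{\eta}_{H^s_x}$ appears on the right, and this quantity is already controlled on all of $[0,T]$, so Gronwall bounds $\norm{\eta(t)}_{H^r_x}$ on the full interval. The main obstacle is the continuous-dependence step: the energy method provides only one-sided control of the $H^s$ norm along the flow, so proving $\eta^\nu\to\eta$ \emph{strongly} in $H^s$ (rather than weakly-$\ast$) requires the delicate balancing of the derivative loss in the difference equation against the quantitative smoothing $\norm{\eta_0^\nu}_{H^{s+\sigma}_x}\lesssim\nu^{-\sigma}$, exactly as in \cite{Bona1975}.
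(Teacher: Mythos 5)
Your proposal correctly reconstructs the Bona--Smith/Saut scheme (parabolic regularization, uniform energy bounds, $L^2$ difference estimate, mollification of the data to upgrade weak to strong $H^s$ continuity and to get continuous dependence), which is exactly the approach the paper invokes by citing \cite{Saut1979} and \cite{Bona1975} without supplying a proof. The only detail worth spelling out is that the $L^2$ Cauchy estimate for $\eta_\delta-\eta_{\delta'}$ picks up the extra source term $(\delta-\delta')\partial_x^2\eta_{\delta'}$, which at regularity $s\in(\tfrac32,2)$ is not directly in $L^2$; one integrates by parts and absorbs $\|\partial_x(\eta_\delta-\eta_{\delta'})\|_{L^2}^2$ into the available dissipation $-\delta\|\partial_x(\eta_\delta-\eta_{\delta'})\|_{L^2}^2$ via Young's inequality (giving a vanishing factor $|\delta-\delta'|^2/\delta$ when $\delta'<\delta$), or alternatively passes first through mollified data as in \cite{Bona1975} — both routes are standard.
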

    \begin{remark}
        The time of existence of order $1/\epsilon$ was obtained in \cite{Klein2018}.
    \end{remark}

%In fact, the time $T$ is at least of the order $1/\epsilon$; see \cite{Klein2018}. We will show that when $\epsilon < \mu$ this can be improved to
%\begin{equation}\label{eq:w_time}
%  T = \frac{1}{c\epsilon} \left( \frac{\mu}{\epsilon} \right)^{\frac{1}{4}^-}
%\end{equation}
%for some $c = c(\norm{\eta_0}_{H^s_x}) > 0$.
We only need to prove that the time of existence is of order $\epsilon^{-1} (\mu/\epsilon)^{\frac{1}{4}^-}$. 
%Then we will have proved Theorem \ref{thm:wellposedness_Whitham}, since 
 Indeed, the uniqueness, continuous dependence on the data, and persistence of higher regularity are local-in-time properties and follow directly from Proposition \ref{prop:existence_whitham}; see Remarks \ref{rem:w_rem1}--\ref{rem:w_rem3} below.

%To prove that the solution exists on the timescale \eqref{eq:w_time}, it suffices, in view of the local existence theorem above, to prove that for a given solution $\eta \in C([0, T ] ; H^s(\R))$, its energy
%\[
%  \mathcal{E}_s(\eta) :=  \int_\R (J^s \eta)^2 \d x
%\]
%is a priori bounded for $t \in [0,T]$. In fact, we claim that
%\begin{equation}\label{eq:w_apriori1}
%      \norm{\eta(t)}_{H^s_x}  \leq 4 \norm{\eta_0}_{H^s_x} 
%\end{equation}
%for $t \in [0,T]$. To prove the claim, we will argue by contradiction. 

We notice that a consequence of Proposition \ref{prop:existence_whitham}, the initial datum $\eta_0 \in H^s(\R)$, for $s> \frac{3}{2}$, gives the rise to a solution to \eqref{eq:Whitham}, $\eta \in C([0,T^\ast);H^s(\R))$, where $T^\ast \geq T(\norm{\eta_0}_{H^s_x})$ is the maximal time of existence. Here, $T$ is a decreasing function of its argument. Then it also follows that the solution satisfies the blow-up alternative:
    \begin{align}\label{eq:BA}
        \text{If } \ \ T^\ast <\infty, \text{ then } \limsup_{t\nearrow T^\ast} \norm{\eta(t)}_{H^s_x} = \infty
    \end{align}
    (see, e.g., \cite[Section 5.2]{Iorio2001}).
%By continuity, if the claim fails, then for some $S \in (0,T]$ we have 
%\begin{equation}\label{eq:w_apriori2}
%     A(t) \le 2A_0
%\end{equation}
%for $t \in [0,S]$, and equality holds for $t=S$. 

%This will help us prove the improved time of existence and that the claim \eqref{eq:w_apriori1} holds, where it is enough to prove the following bootstrap step:

With Proposition \ref{prop:existence_whitham}, we may now obtain the a priori estimates and prove that the solution exists on a larger timescale. This is summarized in the lemma below.

\begin{lem}\label{lem:bootstrap_whitham}
Let $s>\frac{13}{9}$ and $\epsilon,\mu \in (0,1]$. Then there exists a positive time 
\begin{align}\label{eq:w_time_t0}
     T_0 = c^{-1} \epsilon^{-1} \left( \frac{\mu}{\epsilon} \right)^{\frac{1}{4}^-} \norm{\eta_0}_{H^s_x}^{(-\frac{5}{4})^+}
\end{align}
such that $T^\ast > T_0$ and 
\begin{align*}
    \sup_{t\in [0,T_0]}\norm{\eta(t)}_{H^s_x}  \leq 4 \norm{\eta_0}_{H^s_x} .
\end{align*}
\end{lem}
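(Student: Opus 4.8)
The strategy is a standard continuity (bootstrap) argument, combining the energy bound from Corollary~\ref{cor:energy1_whitham} with the refined Strichartz estimate from Proposition~\ref{prop:boundedness_whitham}. Set $M := \norm{\eta_0}_{H^s_x}$ and let $T^\ast$ be the maximal time of existence of the solution $\eta \in C([0,T^\ast);H^s(\R))$ furnished by Proposition~\ref{prop:existence_whitham}, which satisfies the blow-up alternative \eqref{eq:BA}. Define
\[
  T_1 := \sup\Big\{ T \in (0,T^\ast) \ : \ \sup_{t\in[0,T]} \norm{\eta(t)}_{H^s_x} \le 4M \Big\}.
\]
Since $t\mapsto \norm{\eta(t)}_{H^s_x}$ is continuous and equals $M$ at $t=0$, the set is nonempty and $T_1>0$. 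The goal is to show $T_1 \ge T_0$ with $T_0$ as in \eqref{eq:w_time_t0}, by deriving a strictly better bound (say $\le 3M$, or $\le 4M$ with room to spare after halving constants) on $[0,\min(T_1,T_0)]$, which by continuity forces $T_1 > T_0$; the blow-up alternative then gives $T^\ast > T_0$.

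\textbf{Key steps.} First, on the interval $[0,T]$ with $T := \min(T_1,T_0)$, insert the Strichartz bound \eqref{eq:boundedness_whitham} into the exponent of the Gr\"onwall estimate \eqref{eq:gronwall_whitham}. Using H\"older in time, $\epsilon\int_0^T \norm{\partial_x\eta}_{L^\infty_x}\,d\tau \le \epsilon T^{1/2}\norm{\partial_x\eta}_{L^2_TL^\infty_x}$, and then \eqref{eq:boundedness_whitham} together with the bootstrap hypothesis $\norm{\eta}_{L^\infty_TH^s_x}\le 4M$ gives
\[
  \epsilon \int_0^T \norm{\partial_x\eta(\tau)}_{L^\infty_x}\,d\tau
  \lesssim \epsilon\, \mu^{(-\frac15)^+} T^{\frac45^+} M + \epsilon^2 \mu^{(-\frac25)^+} T^{\frac85^+} M^2 .
\]
The next step is to check that, for $T \le T_0 = c^{-1}\epsilon^{-1}(\mu/\epsilon)^{\frac14^-}M^{(-\frac54)^+}$ with $c$ chosen large enough, both terms on the right are $\le \tfrac12 \log(16)$ (or any fixed small constant). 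For the first term, substituting $T=T_0$ yields a power of $\epsilon$, $\mu$, $M$ that should collapse to a pure constant times $c^{-\frac45^+}$: indeed $\epsilon \cdot \epsilon^{-4/5}(\mu/\epsilon)^{1/5-} \cdot \mu^{-1/5+} \sim \epsilon^{1-4/5-1/5}\mu^{0+}$, the $\epsilon$ and $\mu$ powers cancel up to the ``$\pm$'' losses, and the $M$-powers cancel against $M^{(-5/4)^+}\cdot M$; so this term is $\lesssim c^{-\frac45^+}$. One checks the second (quadratic) term similarly, obtaining $\lesssim c^{-\frac85^+}$. Choosing $c = c(M)$ large enough (this is where the dependence of $c$ on $\norm{\eta_0}_{H^s_x}$ in \eqref{T_eps_mu} enters, absorbing the lower-order ``$+$'' losses in the exponents) makes the exponent in \eqref{eq:gronwall_whitham} at most $\log(\tfrac{9}{4})$, hence $\norm{\eta(t)}_{H^s_x}^2 \le \tfrac94 M^2$, i.e. $\norm{\eta(t)}_{H^s_x}\le \tfrac32 M < 4M$ on $[0,T]$. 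Finally, a standard continuity argument: if $T_1 < T_0$ then by continuity $\sup_{[0,T_1]}\norm{\eta}_{H^s_x} \le \tfrac32 M$, contradicting maximality of $T_1$ unless $T_1 = T^\ast$; and if $T_1 = T^\ast \le T_0$ the uniform bound $\le \tfrac32 M$ contradicts the blow-up alternative \eqref{eq:BA}. Hence $T^\ast > T_0$, and the a priori bound holds on $[0,T_0]$.

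\textbf{Main obstacle.} The bulk of the work, and the only genuinely delicate point, is the bookkeeping of the small parameters $\epsilon$, $\mu$, together with the infinitesimal losses ($\theta>0$ arbitrarily small, the ``$+$'' and ``$-$'' superscripts) to verify that at $T=T_0$ the right-hand side of the Gr\"onwall exponent is controlled by a negative power of the free constant $c$ and is therefore small when $c$ is large; one must be careful that the $M$-dependence is genuinely absorbed (the exponent $(-\frac54)^+$ on $M$ in \eqref{eq:w_time_t0} is exactly calibrated for this, but $c$ must be allowed to depend on $M$ to swallow the residual $M^{\theta}$ losses). There is also a minor regularity bookkeeping point: Proposition~\ref{prop:boundedness_whitham} and Corollary~\ref{cor:energy1_whitham} are stated for $H^r$ solutions with $r$ above the relevant thresholds ($r>\frac{13}{8}$ and $s>\frac32$), and the statement of Lemma~\ref{lem:bootstrap_whitham} asks only for $s>\frac{13}{9}$; one runs the argument first at higher regularity $r>\frac{13}{8}$ (where the solution is smooth enough for all estimates to be rigorous), obtains a lifespan depending only on $\norm{\eta_0}_{H^s_x}$, and then passes to the limit via the approximation/continuous-dependence scheme already used in the proof of Corollary~\ref{cor:energy1_whitham} to reach general $\eta_0\in H^s(\R)$, $s>\frac{13}{9}$.
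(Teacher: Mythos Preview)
Your proposal is correct and follows essentially the same bootstrap/continuity argument as the paper: define the maximal time where the bound $\le 4\norm{\eta_0}_{H^s_x}$ holds, combine Corollary~\ref{cor:energy1_whitham} with Proposition~\ref{prop:boundedness_whitham} via Cauchy--Schwarz in time, verify that the exponent in Gr\"onwall is $O(\delta+\delta^2)$ with $\delta \sim c^{-\frac45^+}$ at $T=T_0$, and close by continuity and the blow-up alternative. The paper's write-up differs only cosmetically (a direct proof by contradiction assuming $\Tilde T\le T_0$, and an improved bound $\le 2\norm{\eta_0}_{H^s_x}$ rather than $\le\tfrac32\norm{\eta_0}_{H^s_x}$).

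One minor correction: the hypothesis $s>\tfrac{13}{9}$ in the lemma is a typo for $s>\tfrac{13}{8}$ (compare Theorem~\ref{thm:wellposedness_Whitham} and Proposition~\ref{prop:boundedness_whitham}); the paper simply proceeds with $s>\tfrac{13}{8}$. Your proposed approximation scheme would not actually extend the result to $s\in(\tfrac{13}{9},\tfrac{13}{8})$, since Proposition~\ref{prop:boundedness_whitham} genuinely needs $r>\tfrac{13}{8}$ on the right-hand side and a lifespan depending only on $\norm{\eta_0}_{H^s_x}$ with $s<\tfrac{13}{8}$ cannot be extracted from it---but this is moot given the typo.
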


%To prove this lemma we will combine the classical energy estimate with the refined Strichartz estimate.

%With the estimates from Sections \ref{sec:energy_whitham}--\ref{sec:refined_str_whitham}, we are now in a position to prove Lemma \ref{lem:bootstrap_whitham}. 
\begin{proof}%[Proof of Lemma \ref{lem:bootstrap_whitham}]
    We will argue by contradiction by defining 
    \begin{align}\label{eq:T_tilde_nu_whitham}
        \Tilde{T} := \sup \{ T \in (0,T^\ast) : \sup_{t\in [0,T]} \norm{\eta(t)}_{H^{s}_x} \leq 4 \norm{\eta_0}_{H^{s}_x} \}
    \end{align}
    and assuming that $\Tilde{T} \leq T_0$. Obtaining an estimate inside the ball in \eqref{eq:T_tilde_nu_whitham} for the time $T_0$ defined in \eqref{eq:w_time_t0} will lead to a contradiction.

    Fix $T_1 \in (0,\Tilde{T})$, then we utilize \eqref{eq:gronwall_whitham},  \eqref{eq:boundedness_whitham}, and the assumption $T_1 <\Tilde{T} \leq T_0$ to obtain
    \begin{align*}%\label{eq:energy_est_first_proof_whitham}
        \norm{\eta(t)}_{H^{s}_x}^2 &\leq  \mathrm{e}^{c\epsilon T_1^\frac{1}{2}  \norm{\partial_x \eta}_{L^2_{T_1}L^\infty_x} }   \norm{\eta_0}_{H^{s}_x}^2\\
        &\leq \mathrm{e}^{c \mu^{(-\frac{1}{5})^+} \epsilon T_0^{\frac{4}{5}^+} \norm{\eta_0}_{H^{s}_x} + c \mu^{(-\frac{2}{5})^+} \epsilon^2 T_0^{\frac{8}{5}^+} \norm{\eta_0}_{H^{s}_x}^2 }   \norm{\eta_0}_{H^{s}_x}^2\\
        &=: \mathrm{e}^{c(\delta + \delta^2)}   \norm{\eta_0}_{H^{s}_x}^2
    \end{align*}
    for all $t\in [0,T_1]$ and $s>\frac{13}{8}$. Choosing $T_0>0$ small enough such that
    \begin{align*}
        2c \delta < \log 2,
    \end{align*}
    yielding $c(\delta + \delta^2) < 2c \delta $ and \eqref{eq:w_time_t0},  implies
    \begin{align*}
        \norm{\eta(t)}_{H^{s}_x} & \leq 2\norm{\eta_0}_{H^{s}_x}
    \end{align*}
    for all $t\in [0,T_1]$. As a consequence of \eqref{eq:BA}, we have that $T_1< T^\ast$. Then by the continuity of the solution $\eta(t)$ for $t\in [0,T^\ast)$ from the definition of $T^\ast$, there exists a time $\tau \in [\Tilde{T},T^\ast)$ such that 
    \begin{align*}
        \norm{\eta(\tau)}_{H^s_x} \leq 3\norm{\eta_0}_{H^s_x}.
    \end{align*}
    But this leads to a contradiction of the definition of $\Tilde{T}$ in \eqref{eq:T_tilde_nu_whitham}. We may therefore conclude that indeed $T_0 < \Tilde{T}$, where $T_0$ is defined in \eqref{eq:w_time_t0}.
    \renewcommand\qedsymbol{$\overset{\text{Lemma \ref{lem:bootstrap_whitham}}}{\ensuremath{\Box}}$}
\end{proof}

\SkipTocEntry\subsubsection*{Conclusion.}  
We conclude the proof of Theorem \ref{thm:wellposedness_Whitham} with some remarks on uniqueness, continuous dependence on data, and persistence of higher regularity. All these properties are known for some positive time by the local existence and uniqueness result, Proposition \ref{prop:existence_whitham}, and they therefore extend to the larger time interval $[0,T_0]$ by standard arguments which we briefly recall in the following remarks. In these remarks we assume that $\eta,\tilde\eta \in C([0,T];H^s(\R))$ are two solutions of the Whitham equation with initial data $\eta_0,\tilde\eta_0\in H^s(\R)$, respectively. Here $T > 0$ is completely arbitrary.

\begin{remark}\label{rem:w_rem1}
For uniqueness, assume that $\eta_0=\tilde\eta_0$. If uniqueness fails, then for some $S \in (0,T)$ we have $\eta=\tilde\eta$ in $[0,S]$ while this fails on $[S,S+\delta]$ for all $\delta > 0$. But applying Proposition \ref{prop:existence_whitham} with initial time $t=S$, this contradicts the uniqueness part of the proposition.
\end{remark}

\begin{remark}\label{rem:w_rem2}
For continuous dependence on the data, we must show that given any $\varepsilon > 0$ there exists $\delta > 0$ such that if $\norm{\eta_0-\tilde\eta_0}_{H^s_x} < \delta$, then $\sup_{t \in [0,T]} \norm{\eta(t)-\tilde\eta(t)}_{H^s_x} < \varepsilon$. Choose $R > 0$ so large that $\norm{\eta(t)}_{H^s_x}, \norm{\tilde\eta(t)}_{H^s_x} \le R$ for all $t \in [0,T]$. Now let $S=S(R) > 0$ be the time in Proposition \ref{prop:existence_whitham}. We may assume that $S=T/N$ for a large integer $N$. Then applying the continuous dependence on the data in Proposition \ref{prop:existence_whitham} first at the initial time $t_1:=T-S$, we find first $\delta_1 > 0$ such that if $\norm{\eta(t_1)-\tilde\eta(t_1)}_{H^s_x} < \delta_1$, then $\sup_{t \in [t_1,T]} \norm{\eta(t)-\tilde\eta(t)}_{H^s_x} < \varepsilon$. Next, repeating the argument at the initial time $t_2:=t_1-S=T-2S$ we obtain $\delta_2 \in (0,\delta_1]$ such that if $\norm{\eta(t_2)-\tilde\eta(t_2)}_{H^s_x} < \delta_2$, then $\sup_{t \in [t_2,t_1]} \norm{\eta(t)-\tilde\eta(t)}_{H^s_x} < \delta_1$, etc.
\end{remark}

\begin{remark}\label{rem:w_rem3}
For persistence of higher regularity, assume that $\eta_0$ belongs to $H^r(\R)$ for some $r > s$. Choose $R > 0$ so large that $\norm{\eta(t)}_{H^s_x} \le R$ for all $t \in [0,T]$, let $S=S(R) > 0$ be the time in Proposition \ref{prop:existence_whitham}, and assume that $S=T/N$ for an integer $N$. Applying the persistence property in Proposition \ref{prop:existence_whitham} on the successive time intervals $[0,S]$, $[S,2S]$ etc., we conclude that $\eta$ belongs to  $C([0,T];H^r(\R))$.
\end{remark}

\end{proof}

\section{The Whitham-Boussinesq system}\label{sec:WB_d_2}
In this section, we will give the proof of Theorem \ref{thm:wellposedness_WB}. For brevity, we will only give the proof in the two-dimensional case. However, the proof for dimension one follows the same procedure. 

To prove Theorem \ref{thm:wellposedness_WB}, we only need to prove that the solution exists on the improved lifespan $\epsilon^{-\frac{d+3}{4}} (\mu/\epsilon)^{\frac{1}{4}^-}$. To be precise, the uniqueness, the continuous dependence on the initial data, and the persistence at higher regularity will be a consequence of the existing local well-posedness result (stated in Theorem \ref{thm:paulsen} below). The improved lifespan will be obtained by combining the energy method with a refined Strichartz estimate,
where we will first need to prove two commutator estimates and state some results needed to prove Theorem \ref{thm:wellposedness_WB} (see Sections \ref{sec:commutator_ests}--\ref{sec:refined_str_wb}). This will all come together in Section \ref{sec:proof_main_res_two_dims}  to prove Theorem \ref{thm:wellposedness_WB}.
%Next, we will derive sharper energy estimates than in \cite{Paulsen2022} (see Proposition \ref{prop:energy_ests_d_2}). To improve the time of existence, we will also derive refined Strichartz estimates for \eqref{eq:whitham_boussinesq_d2} from Corollary \ref{cor:refined_strichartz_d_1} (see Proposition \ref{prop:boundedness_eta_v_d_2} and Lemma \ref{lem:bootstrap_wb}). 
Throughout this section, we let $x=(x_1,x_2)\in \R^2$ and $\mathbf{v} = (v_1,v_2)^\mathrm{T}$.

\subsection{Commutator and pointwise estimates}\label{sec:commutator_ests}
We will need to derive commutator estimates for the Fourier multiplier $\mathcal{T}_\mu(D)$ and its inverse. These commutator estimates will be deduced from a generalization of the Kato-Ponce commutator estimate \eqref{eq:kato_ponce}, which holds for the class of symbols defined below. 
\begin{definition}[Symbol class {\cite[Definition B.7]{Lannes2013}}] \label{def:symb_class}
    We say that a symbol $\sigma(D)$ is an element of the symbol class $\mathtt{S}^s$ with $s\in \R$, if $\xi \in \R^d \mapsto \sigma (\xi) \in \mathbb{C}$ is smooth and satisfies 
    \begin{align*}%\label{eq:symbolclass}
        \forall \alpha \in \N^d, \ \ \sup_{\xi \in \R^d} \ \jbrack{\xi}^{|\alpha|- s} \Big| \frac{\partial^\alpha}{\partial\xi^\alpha} \sigma(\xi)  \Big| < \infty. 
    \end{align*}
    We also introduce the following semi-norm
    \begin{align*}%\label{eq:symbolclass_seminorm}
        \mathcal{N}^s(\sigma) = \sup_{\alpha\in \N^d, \:  |\alpha| \leq 2+d+\lceil\frac{d}{2} \rceil}\: \sup_{\xi \in \R^d} \ \jbrack{\xi}^{|\alpha|-s} \Big| \frac{\partial^\alpha}{\partial \xi^\alpha }  \sigma(\xi) \Big|.
    \end{align*}
\end{definition}
This symbol class satisfies the following commutator estimate found in \cite[Proposition B.8]{Lannes2013} (see also \cite{Lannes2006}).
\begin{lem}\label{lem:gen_kato_ponce}
    Let $s\geq 0$, $t_0>\frac{d}{2}$, and $\sigma \in \mathtt{S}^s$. If $f\in H^s(\R^d)\cap H^{t_0+1}(\R^d)$ and $g\in H^{s-1}(\R^d) \cap H^{t_0}(\R^d)$, then
    \begin{align}\label{eq:gen_kato_ponce_Linfty}
        \norm{[\sigma(D),f]g}_{L^2_x} &\lesssim \mathcal{N}^s(\sigma) (\norm{\nabla f}_{L^\infty_x}\norm{g}_{H^{s-1}_x} + \norm{\nabla f}_{H^{s-1}_x}\norm{g}_{L^\infty_x}).
     %   \intertext{Moreover, if $0 \le s \le t_0+1$ and $f \in H^{t_0+1}(\R^d)$, then  for all $g\in H^{s-1}(\R^d)$ there holds}
      %   \norm{[\sigma(D) , f]g}_{L^2_x} &\lesssim \mathcal{N}^s(\sigma) \norm{f}_{H^{t_0+1}} \norm{J^{s-1}  g}_{L^2_x}.
       %  \label{eq:gen_kato_ponce_L2}
\end{align}
\end{lem}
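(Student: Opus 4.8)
The plan is to view $(f,g)\mapsto[\sigma(D),f]g$ as a bilinear Fourier multiplier and to estimate it by a Littlewood--Paley decomposition, the role of the symbol class $\mathtt{S}^s$ being to supply the hypotheses of the Coifman--Meyer multiplier theorem. Taking Fourier transforms,
\[
  \widehat{[\sigma(D),f]g}(\xi)=\int_{\R^d}\bigl(\sigma(\xi)-\sigma(\xi-\zeta)\bigr)\widehat f(\zeta)\,\widehat g(\xi-\zeta)\,d\zeta ,
\]
and the fundamental theorem of calculus gives the factorization $\sigma(\xi)-\sigma(\xi-\zeta)=\zeta\cdot\int_0^1(\nabla\sigma)(\xi-(1-\tau)\zeta)\,d\tau$. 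Hence $[\sigma(D),f]g$ equals, up to a constant, a bilinear multiplier $B_m(\nabla f,g)$ with symbol $m(\zeta,\xi-\zeta)=\int_0^1(\nabla\sigma)(\xi-(1-\tau)\zeta)\,d\tau$. In particular only $\nabla f$ enters, consistent with $[\sigma(D),f+c]g=[\sigma(D),f]g$; by this invariance I would first discard the frequencies of $f$ below a fixed scale, so that on the remaining pieces $\norm{f}_{H^s_x}\sim\norm{\nabla f}_{H^{s-1}_x}$ and $\norm{f}_{L^\infty_x}\lesssim\norm{\nabla f}_{L^\infty_x}$. The extra hypotheses $f\in H^{t_0+1}(\R^d)$, $g\in H^{t_0}(\R^d)$ with $t_0>d/2$ guarantee $\nabla f,g\in L^\infty$ (so the right-hand side is finite) and provide the regularity needed to justify the computations by a density argument.

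Next I would split $\nabla f=\sum_N P_N(\nabla f)$ and $g=\sum_M P_M g$ and organize the double sum into the three standard regimes $N\ll M$, $M\ll N$, and $N\sim M$ (equivalently the Bony pieces $T_{\nabla f}g$, $T_g(\nabla f)$, $R(\nabla f,g)$). In the regime $N\ll M$ the output frequency is $\sim M$ and $\xi-(1-\tau)\zeta$ has size $\sim M$ on the relevant supports, so $\jbrack{M}^{1-s}m$ is, uniformly in $N\ll M$, a Coifman--Meyer symbol of order $0$ with constant $\lesssim\mathcal{N}^s(\sigma)$ --- this is precisely where the derivative count $|\alpha|\le 2+d+\lceil d/2\rceil$ in $\mathcal{N}^s$ is consumed. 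Applying the Coifman--Meyer theorem with $\nabla f$ in $L^\infty$ and $P_M g$ in $L^2$, and summing in $M$ using the almost orthogonality of the output frequencies, gives a bound by $\mathcal{N}^s(\sigma)\norm{\nabla f}_{L^\infty_x}\norm{g}_{H^{s-1}_x}$. The regime $M\ll N$ is symmetric, with output frequency $\sim N$ and $g$ placed in $L^\infty$, producing $\mathcal{N}^s(\sigma)\norm{\nabla f}_{H^{s-1}_x}\norm{g}_{L^\infty_x}$.

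The diagonal regime $N\sim M$ will be the main obstacle: here there is no cancellation to exploit beyond the crude splitting $[\sigma(D),P_Nf]P_Mg=\sigma(D)(P_Nf\,P_Mg)-P_Nf\,\sigma(D)P_Mg$, and, crucially, the output frequency $K$ can lie far below the common input scale $N\sim M$. The inputs to the plan here are the $L^1$--kernel bounds $\norm{\mathcal{F}^{-1}(\sigma\,\beta_{2^j})}_{L^1_x}\lesssim\mathcal{N}^s(\sigma)\jbrack{2^j}^s$ (obtained by non-stationary-phase integration by parts, again using the derivatives in $\mathcal{N}^s$) together with $H^{t_0}\hookrightarrow L^\infty$ for the lowest frequencies; these yield $\norm{P_K\sigma(D)(P_Nf\,P_Mg)}_{L^2_x}\lesssim\mathcal{N}^s(\sigma)\jbrack{K}^{s}\norm{P_Nf}_{L^2_x}\norm{P_Mg}_{L^\infty_x}$ and a comparable bound for the second summand. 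One is then left to sum $\sum_{N\gtrsim K}\jbrack{N}^{s}\norm{P_Nf}_{L^2_x}\norm{P_{\sim N}g}_{L^\infty_x}$ against an output weight; this is a Schur/Hardy-type inequality that closes precisely because $s\ge 0$ (the endpoint $s=0$ being either avoided by an $\varepsilon$ of room or handled directly), and it produces the second term $\mathcal{N}^s(\sigma)\norm{\nabla f}_{H^{s-1}_x}\norm{g}_{L^\infty_x}$ after restoring $\norm{f}_{H^s_x}\sim\norm{\nabla f}_{H^{s-1}_x}$ on frequencies $\gtrsim1$. Collecting the three regimes gives \eqref{eq:gen_kato_ponce_Linfty}. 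As an alternative to this bookkeeping one may invoke the paradifferential symbolic calculus of \cite{Lannes2013}: $[\sigma(D),T_f]$ has order $s-1$ with operator norm $\lesssim\mathcal{N}^s(\sigma)\norm{\nabla f}_{L^\infty_x}$ (first term), while the $T_g f$ and $R(f,g)$ contributions, after commuting with $\sigma(D)$, are controlled by $\mathcal{N}^s(\sigma)\norm{\nabla f}_{H^{s-1}_x}\norm{g}_{L^\infty_x}$ (second term); and when $\sigma(\xi)=\jbrack{\xi}^s$ the whole scheme reduces to the classical proof of \eqref{eq:kato_ponce} at $p=2$.
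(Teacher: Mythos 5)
First, a point of comparison: the paper does not prove this lemma at all --- it is quoted verbatim from \cite[Proposition B.8]{Lannes2013}, so there is no in-paper argument to measure your proposal against. Your strategy is essentially the standard paradifferential/Littlewood--Paley proof that underlies that citation: the reduction to a bilinear multiplier in $\nabla f$ via the fundamental theorem of calculus, the Bony trichotomy, and the use of the derivative bounds in $\mathcal{N}^s(\sigma)$ through kernel or Coifman--Meyer estimates are all correct and correctly organized. The low--high and high--low regimes are handled soundly and do produce the two terms on the right-hand side of \eqref{eq:gen_kato_ponce_Linfty}.

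The one genuine soft spot is the resonant regime at the endpoint $s=0$, which the lemma as stated includes ($s\ge 0$). After your crude splitting, the output at frequency $K$ is bounded by $\jbrack{K}^{s}\sum_{N\gtrsim K}\norm{P_Nf}_{L^2_x}\norm{P_{\sim N}g}_{L^\infty_x}$, and closing in $\ell^2_K$ requires summing the kernel $(\jbrack{K}/\jbrack{N})^{s}\mathbbm{1}_{N\gtrsim K}$, which is Schur-summable only for $s>0$; at $s=0$ it diverges logarithmically, and the alternative of Cauchy--Schwarz in $N$ produces $\bigl(\sum_N\norm{P_Ng}_{L^\infty_x}^2\bigr)^{1/2}$, a Besov norm that is \emph{not} controlled by $\norm{g}_{L^\infty_x}$. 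Writing that the endpoint is ``handled directly'' is not a proof. This is precisely where the hypotheses $f\in H^{t_0+1}(\R^d)$ and $g\in H^{t_0}(\R^d)$ with $t_0>d/2$ do real work in Lannes' argument (they are not there merely to make the right-hand side finite or to justify a density argument, as you suggest); alternatively one must exploit further commutator structure in the high--high piece rather than discarding it by the triangle inequality. For $s>0$ your sketch is complete in outline, so the gap concerns only the endpoint --- but since the statement claims $s\ge 0$, it should either be closed or the argument restricted with a separate treatment of $s=0$.
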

To use Lemma \ref{lem:gen_kato_ponce} when proving the desired commutator estimates, we need the following pointwise results on the symbol $\mathcal{T}_\mu(\xi)$ that are found in \cite[Lemma 2.5 and A.1]{Paulsen2022} for $d=1$. 
\begin{lem}\label{lem:derivatives_Tilbert}
    Let $\mu \in (0,1]$. Then the symbol $\mathcal{T}_\mu(\xi)$ defined in \eqref{eq:def_T_mu_xi} satisfies
    \begin{align}%\label{eq:bound_Tilbert} 
    \notag
        \mathcal{T}_\mu (\xi) \jbrack{\sqrt{\mu}\xi} &\sim 1,
        \\
        \Big|\frac{\partial^\alpha}{\partial \xi^\alpha}\sqrt{\mathcal{T}_\mu (\xi)} \Big| &\lesssim \mu^{\frac{|\alpha|}{2}} \jbrack{\sqrt{\mu}\xi}^{-\frac{1}{2} - |\alpha|}
        \label{eq:derivatives_Tilbert}
    \end{align}
    for any $\alpha\in \N^d$.
\end{lem}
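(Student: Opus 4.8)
\emph{Step 1: reduction to $\mu=1$ by scaling.} The starting observation is that $\mathcal{T}_\mu(\xi)=\mathcal{T}_1(\sqrt{\mu}\,\xi)$, where $\mathcal{T}_1(\eta)=\tanh(|\eta|)/|\eta|$. Hence, setting $\eta=\sqrt{\mu}\,\xi$ and using the chain rule,
\[
\partial^\alpha_\xi\sqrt{\mathcal{T}_\mu}(\xi)=\mu^{\frac{|\alpha|}{2}}\bigl(\partial^\alpha\sqrt{\mathcal{T}_1}\bigr)(\sqrt{\mu}\,\xi),\qquad \mathcal{T}_\mu(\xi)\jbrack{\sqrt{\mu}\,\xi}=\mathcal{T}_1(\eta)\jbrack{\eta}.
\]
Therefore both assertions of the lemma follow once we prove, uniformly in $\eta\in\R^d$, that $\mathcal{T}_1(\eta)\jbrack{\eta}\sim 1$ and $\bigl|\partial^\alpha\sqrt{\mathcal{T}_1}(\eta)\bigr|\lesssim\jbrack{\eta}^{-\frac12-|\alpha|}$ for every $\alpha\in\N^d$.

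\emph{Step 2: the first estimate and smoothness.} Since $r\mapsto \tanh r/r$ is an even entire function of $r$ with value $1$ at $r=0$, we may write $\mathcal{T}_1(\eta)=\tilde h(|\eta|^2)$ for a smooth function $\tilde h>0$; thus $\mathcal{T}_1$ and $\sqrt{\mathcal{T}_1}$ are smooth and strictly positive on all of $\R^d$. On $\{|\eta|\le 1\}$, $\mathcal{T}_1$ is continuous and bounded above and below away from zero, and $\jbrack{\eta}\sim 1$, so $\mathcal{T}_1(\eta)\jbrack{\eta}\sim 1$ there. On $\{|\eta|\ge 1\}$ we have $\tanh|\eta|\in[\tanh 1,1)$, hence $\mathcal{T}_1(\eta)=\tanh|\eta|/|\eta|\sim 1/|\eta|\sim 1/\jbrack{\eta}$. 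This proves $\mathcal{T}_1(\eta)\jbrack{\eta}\sim 1$, and via Step 1 the first displayed estimate of the lemma.

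\emph{Step 3: the derivative bounds.} On $\{|\eta|\le 1\}$ the function $\sqrt{\mathcal{T}_1}$ is smooth by Step 2, so all its derivatives are bounded there; since $\jbrack{\eta}\sim 1$ this is the claimed bound. On $\{|\eta|\ge 1\}$ we factor $\sqrt{\mathcal{T}_1(\eta)}=|\eta|^{-1/2}\psi(|\eta|)$ with $\psi(r)=\sqrt{\tanh r}$. Because $\tanh r=1-2/(e^{2r}+1)$, for $r\ge 1$ one has $\psi(r)=1+O(e^{-2r})$ and $|\psi^{(k)}(r)|\lesssim_k e^{-2r}$ for $k\ge1$; in particular $\psi$ and all its derivatives are $O(1)$ for $r\ge1$, while for $g(r)=r^{-1/2}$ one has $|g^{(j)}(r)|\lesssim_j r^{-1/2-j}$. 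Now expand $\partial^\alpha_\eta$ of the product $g(|\eta|)\psi(|\eta|)$ by the Leibniz rule, and each resulting factor $\partial^\beta_\eta\bigl(f(|\eta|)\bigr)$, $f\in\{g,\psi\}$, by the Faà di Bruno formula: it is a finite sum of terms $f^{(k)}(|\eta|)\prod_{l=1}^{k}\partial^{\beta_l}_\eta|\eta|$ with $\sum_l\beta_l=\beta$ and $|\beta_l|\ge1$. Using $|\partial^m_\eta|\eta||\lesssim_m|\eta|^{1-m}$ for $|\eta|\ge1$, the product of the $|\eta|$-derivative factors is $O\!\bigl(|\eta|^{k-|\beta|}\bigr)$, so $|\partial^\beta_\eta(g(|\eta|))|\lesssim|\eta|^{-1/2-|\beta|}$ and $|\partial^\beta_\eta(\psi(|\eta|))|\lesssim|\eta|^{-|\beta|}$. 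Combining by Leibniz gives $|\partial^\alpha_\eta\sqrt{\mathcal{T}_1}(\eta)|\lesssim|\eta|^{-1/2-|\alpha|}\sim\jbrack{\eta}^{-1/2-|\alpha|}$ for $|\eta|\ge1$, completing Step 3 and, by Step 1, the lemma.

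\emph{Main obstacle.} There is no genuinely hard step here; the only point requiring care is the derivative count in the region $|\eta|\gtrsim1$, where one must track how the mildly singular derivatives $\partial^m|\eta|=O(|\eta|^{1-m})$ combine with the decay $|g^{(k)}|\lesssim|\eta|^{-1/2-k}$ so that the net gain is exactly $|\eta|^{-1/2-|\alpha|}$. This is a routine Faà di Bruno bookkeeping, essentially identical to the one-dimensional computation in \cite[Lemma 2.5 and A.1]{Paulsen2022}; the only structural change in dimension $d=2$ is recording at the outset that $\mathcal{T}_\mu$ (hence $\sqrt{\mathcal{T}_\mu}$) is a smooth function of $\xi\in\R^2$ because $\tanh r/r$ is an even function of $r$.
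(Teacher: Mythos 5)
Your proof is correct. Note that the paper itself gives no proof of this lemma: it simply cites \cite[Lemma 2.5 and A.1]{Paulsen2022} for the one-dimensional case, so your argument — rescaling to $\mu=1$ via $\mathcal{T}_\mu(\xi)=\mathcal{T}_1(\sqrt{\mu}\,\xi)$ and then verifying the bounds for $\mathcal{T}_1$ directly by splitting into $|\eta|\le 1$ and $|\eta|\ge 1$ with a Leibniz/Fa\`a di Bruno count in the outer region — supplies exactly the self-contained verification the paper omits, and the scaling reduction is the same device the paper uses elsewhere (e.g.\ in \eqref{eq:scaling_S}). One cosmetic slip: $\tanh(r)/r$ is not entire (it has poles at $r=i\pi(k+\tfrac12)$); what you need, and what is true, is that it is an even real-analytic function on $\R$, hence a smooth positive function of $|\eta|^2$, which gives the smoothness of $\sqrt{\mathcal{T}_1}$ at the origin. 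With that wording fixed, the argument is complete.
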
  
The next result is a direct consequence of the chain rule and Lemma \ref{lem:derivatives_Tilbert}.
\begin{cor}\label{cor:est_on_Tilbert_symbol}
Let $\mu \in (0,1]$. Then the symbol $\sqrt{\mathcal{T}_\mu(\xi)}$ defined in \eqref{eq:def_T_mu_xi} satisfies
    \begin{align}\label{eq:derivatives_Tilbert_inv}
        \Big|\frac{\partial^\alpha}{\partial \xi^\alpha}\frac{1}{\sqrt{\mathcal{T}_\mu (\xi)}} \Big| & \lesssim \mu^{\frac{|\alpha|}{2}} \jbrack{\sqrt{\mu}\xi}^{\frac{1}{2}- |\alpha|}
    \end{align}
    for any $\alpha\in \N^d$.
\end{cor}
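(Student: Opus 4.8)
The plan is to write $1/\sqrt{\mathcal{T}_\mu}$ as a composition and differentiate using the Fa\`a di Bruno formula, feeding in the bounds already recorded in Lemma \ref{lem:derivatives_Tilbert}. Concretely, I would set $F := \sqrt{\mathcal{T}_\mu}$, so that $1/\sqrt{\mathcal{T}_\mu} = h \circ F$ with $h(t) = 1/t$, and recall that $h^{(k)}(t) = (-1)^k k!\, t^{-(k+1)}$.

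First I would dispose of the case $\alpha = 0$: the two-sided bound $\mathcal{T}_\mu(\xi)\jbrack{\sqrt{\mu}\xi} \sim 1$ from Lemma \ref{lem:derivatives_Tilbert} gives $\mathcal{T}_\mu(\xi) \sim \jbrack{\sqrt{\mu}\xi}^{-1}$, hence $1/\sqrt{\mathcal{T}_\mu(\xi)} \sim \jbrack{\sqrt{\mu}\xi}^{1/2}$, which is \eqref{eq:derivatives_Tilbert_inv} with $\abs{\alpha} = 0$. More importantly, this yields the pointwise lower bound $F(\xi) \gtrsim \jbrack{\sqrt{\mu}\xi}^{-1/2}$, i.e. $F(\xi)^{-(k+1)} \lesssim \jbrack{\sqrt{\mu}\xi}^{(k+1)/2}$ for every $k$, which is exactly what is needed to control the negative powers produced by differentiating $h$.

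For $\abs{\alpha} \ge 1$, the multivariate Fa\`a di Bruno formula writes $\partial^\alpha(1/F)$ as a finite linear combination of terms $F^{-(k+1)} \prod_{j=1}^{k} \partial^{\alpha_j} F$ with $1 \le k \le \abs{\alpha}$, all $\alpha_j \neq 0$, and $\alpha_1 + \dots + \alpha_k = \alpha$. Inserting the lower bound on $F$ together with the derivative bound \eqref{eq:derivatives_Tilbert}, $\abs{\partial^{\alpha_j} F} \lesssim \mu^{\abs{\alpha_j}/2}\jbrack{\sqrt{\mu}\xi}^{-1/2 - \abs{\alpha_j}}$, and using $\sum_j \abs{\alpha_j} = \abs{\alpha}$ together with the fact that there are exactly $k$ factors, each such term is bounded by $\mu^{\abs{\alpha}/2}\jbrack{\sqrt{\mu}\xi}^{\frac{k+1}{2} - \frac{k}{2} - \abs{\alpha}} = \mu^{\abs{\alpha}/2}\jbrack{\sqrt{\mu}\xi}^{\frac{1}{2} - \abs{\alpha}}$, independently of $k$ and of the chosen partition; summing the finitely many terms gives \eqref{eq:derivatives_Tilbert_inv}. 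Equivalently, one could compose directly with $t \mapsto t^{-1/2}$ after first deducing $\abs{\partial^\alpha \mathcal{T}_\mu} \lesssim \mu^{\abs{\alpha}/2}\jbrack{\sqrt{\mu}\xi}^{-1-\abs{\alpha}}$ from Lemma \ref{lem:derivatives_Tilbert} via the product rule applied to $\mathcal{T}_\mu = F^2$; the bookkeeping is identical.

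As the statement indicates, there is no genuine obstacle here — this is a direct consequence of the chain rule. The only points to watch are that one truly needs the two-sided estimate $\mathcal{T}_\mu\jbrack{\sqrt{\mu}\xi} \sim 1$, not merely the upper bound on $\mathcal{T}_\mu$, in order to bound the negative powers $F^{-(k+1)}$; and that one must observe that the power of $\jbrack{\sqrt{\mu}\xi}$ gained from a $k$-fold product of derivatives of $F$ cancels exactly against the loss coming from $F^{-(k+1)}$, leaving the clean, $k$-independent exponent $\tfrac12 - \abs{\alpha}$ (and likewise the power of $\mu$ accumulates to precisely $\mu^{\abs{\alpha}/2}$).
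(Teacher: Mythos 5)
Your proof is correct and matches the paper's approach: the paper states only that the corollary is ``a direct consequence of the chain rule and Lemma \ref{lem:derivatives_Tilbert},'' and your argument is precisely that computation spelled out via Fa\`a di Bruno, with the correct observation that the two-sided estimate $\mathcal{T}_\mu\jbrack{\sqrt{\mu}\xi}\sim 1$ (not just the upper derivative bounds) is what controls the negative powers of $F$, and that the exponents of $\mu$ and $\jbrack{\sqrt{\mu}\xi}$ collapse to the $k$-independent values $|\alpha|/2$ and $\tfrac12-|\alpha|$.
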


From Lemmas \ref{lem:gen_kato_ponce}--\ref{lem:derivatives_Tilbert} and Corollary \ref{cor:est_on_Tilbert_symbol}, we are able to derive two commutator estimates that will come in handy later.

\begin{lem}\label{lem:commutator_est_tilbert}
    Let  $\mu \in (0,1]$, $s\geq 0$, $t_0>\frac{d}{2}$, $f\in H^s(\R^d)\cap H^{t_0+1}(\R^d)$, and $g\in H^{s-1}(\R^d) \cap H^{t_0}(\R^d)$, then
    \begin{align} \label{eq:commutator_est_tilbert2}
        \norm{[J^s\sqrt{\mathcal{T}_\mu}(D),f] g}_{L^2_x} &\lesssim \norm{\nabla f}_{L^\infty_x}\norm{J^{s-1} g}_{L^2_x} + \norm{J^s f}_{L^2_x}\norm{g}_{L^\infty_x}.
    \end{align}
\end{lem}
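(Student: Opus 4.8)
The strategy is to write the operator $J^s\sqrt{\mathcal{T}_\mu}(D)$ as a Fourier multiplier with symbol $\jbrack{\xi}^s\sqrt{\mathcal{T}_\mu(\xi)}$ and to verify that this symbol lies in the class $\mathtt{S}^s$ of Definition \ref{def:symb_class} with semi-norm $\mathcal{N}^s$ bounded \emph{uniformly in} $\mu\in(0,1]$, so that the generalized Kato–Ponce commutator estimate \eqref{eq:gen_kato_ponce_Linfty} of Lemma \ref{lem:gen_kato_ponce} applies directly. First I would set $\sigma_\mu(\xi):=\jbrack{\xi}^s\sqrt{\mathcal{T}_\mu(\xi)}$ and estimate its derivatives via the Leibniz rule:
\[
  \frac{\partial^\alpha}{\partial\xi^\alpha}\sigma_\mu(\xi)
  = \sum_{\beta\le\alpha}\binom{\alpha}{\beta}\Big(\frac{\partial^\beta}{\partial\xi^\beta}\jbrack{\xi}^s\Big)\Big(\frac{\partial^{\alpha-\beta}}{\partial\xi^{\alpha-\beta}}\sqrt{\mathcal{T}_\mu(\xi)}\Big).
\]
For the first factor one has the standard bound $\big|\partial^\beta\jbrack{\xi}^s\big|\lesssim\jbrack{\xi}^{s-|\beta|}$, and for the second factor Lemma \ref{lem:derivatives_Tilbert}, specifically \eqref{eq:derivatives_Tilbert}, gives
\[
  \Big|\frac{\partial^{\alpha-\beta}}{\partial\xi^{\alpha-\beta}}\sqrt{\mathcal{T}_\mu(\xi)}\Big|\lesssim \mu^{\frac{|\alpha-\beta|}{2}}\jbrack{\sqrt{\mu}\xi}^{-\frac12-|\alpha-\beta|}.
\]

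**Uniformity in $\mu$.** The key point is the elementary inequality $\mu^{k/2}\jbrack{\sqrt{\mu}\xi}^{-k}\le\jbrack{\xi}^{-k}$ for any $k\ge 0$ and $\mu\in(0,1]$, which follows because $\mu^{k/2}\jbrack{\sqrt{\mu}\xi}^{-k}=\big(\tfrac{\mu}{1+\mu|\xi|^2}\big)^{k/2}\le\big(\tfrac{1}{1+|\xi|^2}\big)^{k/2}$ (using $\mu\le 1$ in both numerator and denominator comparison). Combined with $\jbrack{\sqrt{\mu}\xi}^{-1/2}\le 1$, this yields $\big|\partial^{\alpha-\beta}\sqrt{\mathcal{T}_\mu(\xi)}\big|\lesssim\jbrack{\xi}^{-|\alpha-\beta|}$ uniformly in $\mu$. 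Plugging back into the Leibniz expansion and summing over $\beta\le\alpha$ gives $\big|\partial^\alpha\sigma_\mu(\xi)\big|\lesssim\jbrack{\xi}^{s-|\alpha|}$ with implicit constant independent of $\mu$; hence $\sigma_\mu\in\mathtt{S}^s$ with $\mathcal{N}^s(\sigma_\mu)\lesssim 1$ uniformly.

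**Conclusion.** With this in hand, Lemma \ref{lem:gen_kato_ponce} applied to $\sigma=\sigma_\mu$ gives
\[
  \norm{[J^s\sqrt{\mathcal{T}_\mu}(D),f]g}_{L^2_x}\lesssim \mathcal{N}^s(\sigma_\mu)\big(\norm{\nabla f}_{L^\infty_x}\norm{g}_{H^{s-1}_x}+\norm{\nabla f}_{H^{s-1}_x}\norm{g}_{L^\infty_x}\big),
\]
and since $\norm{g}_{H^{s-1}_x}=\norm{J^{s-1}g}_{L^2_x}$ and $\norm{\nabla f}_{H^{s-1}_x}\lesssim\norm{J^s f}_{L^2_x}$, together with $\mathcal{N}^s(\sigma_\mu)\lesssim 1$, this is exactly \eqref{eq:commutator_est_tilbert2}. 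The only mildly delicate point — and the one I expect to require the most care — is tracking the powers of $\mu$ through the Leibniz rule to confirm the bound is genuinely $\mu$-uniform; everything else is a routine application of the cited lemmas. (A small bookkeeping remark: the hypotheses on $f,g$ in the statement match exactly those of Lemma \ref{lem:gen_kato_ponce}, so no extra approximation or density argument is needed.)
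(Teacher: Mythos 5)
Your proof is correct and follows essentially the same route as the paper: verify that $\jbrack{\xi}^s\sqrt{\mathcal{T}_\mu(\xi)}\in\mathtt{S}^s$ with $\mathcal{N}^s$ bounded uniformly in $\mu$ via the Leibniz rule and \eqref{eq:derivatives_Tilbert}, then invoke Lemma \ref{lem:gen_kato_ponce} and use $\norm{\nabla f}_{H^{s-1}_x}\lesssim\norm{J^s f}_{L^2_x}$. The only difference is cosmetic: you make explicit the elementary inequality $\mu^{k/2}\jbrack{\sqrt{\mu}\xi}^{-k}\le\jbrack{\xi}^{-k}$ that the paper uses implicitly when asserting the Leibniz sum is $\lesssim 1$.
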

\begin{remark}
    The calculations for the verification of $J^s\sqrt{\mathcal{T}_\mu}(D) \in \mathtt{S}^s$ and $\mathcal{N}^s(J^s \sqrt{\mathcal{T}_\mu}) \lesssim 1$ uniformly in $\mu\in(0,1]$ are similar to those found in \cite[p. 6344]{Paulsen2022} for $s=0$. However, since we require sharper commutator estimates than those presented in \cite{Paulsen2022}, we include the calculations for the convenience of the reader. In particular, we need energy estimates with one part in $L^\infty(\R^d)$ to utilize the refined Strichartz estimates (see Corollary \ref{cor:refined_strichartz_d_1} and Proposition \ref{prop:boundedness_eta_v_d_2}). This will be a key part of achieving an enhanced lifespan.
\end{remark}
\begin{proof}
    We will use \eqref{eq:gen_kato_ponce_Linfty} to obtain the commutator estimate \eqref{eq:commutator_est_tilbert2}. In other words, we need to verify that  $J^s \sqrt{\mathcal{T}_\mu} \in \mathtt{S}^s$ and calculate $\mathcal{N}^s(J^s \sqrt{\mathcal{T}_\mu})$, which are defined in Definition \ref{def:symb_class}. From  the Leibniz rule and \eqref{eq:derivatives_Tilbert}, it follows that
    \begin{align*}
        \jbrack{\xi}^{|\alpha|-s} \Big| \frac{\partial^\alpha}{\partial\xi^\alpha} \Big( \sqrt{\mathcal{T}_\mu(\xi)} \jbrack{\xi}^{s}\Big) \Big|  \lesssim \jbrack{\xi}^{|\alpha|-s} \sum_{\gamma\in \N^d\: : \: \gamma\leq \alpha} \mu^{\frac{|\alpha|-|\gamma|}{2}} \jbrack{\sqrt{\mu}\xi}^{-\frac{1}{2} - (|\alpha|-|\gamma|)} \jbrack{\xi}^{s-|\gamma|} \lesssim  1,
    \end{align*}
    where the implicit constant is independent of $\mu \in (0,1]$. Consequently, $J^s\sqrt{\mathcal{T}_\mu} \in \mathtt{S}^s$ and $\mathcal{N}^s(J^s\sqrt{\mathcal{T}_\mu} ) \lesssim 1$ uniformly in $\mu$. Then, we have from \eqref{eq:gen_kato_ponce_Linfty} that 
    \begin{align*}
        \norm{[J^s \sqrt{\mathcal{T}_\mu}(D),f] g}_{L^2_x} %&\lesssim \norm{\nabla f}_{L^\infty_x}\norm{g}_{H^{s-1}_x} + \norm{\nabla f}_{H^{s-1}_x}\norm{ g}_{L^\infty_x}\\
        &\lesssim \norm{\nabla f}_{L^\infty_x}\norm{g}_{H^{s-1}_x} + \norm{f}_{H^s_x}\norm{g}_{L^\infty_x}
    \end{align*}
    as desired. 
\end{proof}

\begin{lem}\label{lem:commutator_inv_tilbert}
    Let $\mu\in(0,1]$, $s\geq 0$, $t_0 >\frac{d}{2}$, $f\in H^s(\R^d)\cap H^{t_0+1}(\R^d)$, and $g\in H^{s-1}(\R^d) \cap H^{t_0}(\R^d)$. Then 
    \begin{align}\label{eq:commutator_est_tilbert_inv}
        \norm{[ J^s \mathcal{T}_{\mu}^{-\frac{1}{2}}(D) , f]g}_{L^2_x} &\lesssim \norm{\nabla f}_{L^\infty_x} \norm{J^{s-1} J^\frac{1}{2}_\mu g}_{L^2_x} + \norm{J^s J^\frac{1}{2}_\mu f}_{L^2_x}\norm{g}_{L^\infty_x}
        .
    \end{align}
    %Additionally, if  $f\in H^{t_0+1}(\R^d)$, and $g\in H^{\frac12}(\R^d)$, we have
    %\begin{align}\label{eq:commutator_est_NEW}
    %    \norm{[ \mathcal{T}_{\mu}^{-\frac{1}{2}}(D) , f]g}_{L^2_x} &\lesssim \norm{f}_{H^{t_0+1}_x} \norm{J^\frac{1}{2}_\mu g}_{L^2_x}.
    %\end{align}
\end{lem}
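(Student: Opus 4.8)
The plan is to mirror the proof of Lemma \ref{lem:commutator_est_tilbert}, replacing the symbol $\sqrt{\mathcal T_\mu}$ by $\mathcal T_\mu^{-1/2}$ but absorbing the growth at high frequency by the extra factor $J_\mu^{1/2}$ on the right-hand side. Concretely, I would apply the generalized Kato-Ponce commutator estimate \eqref{eq:gen_kato_ponce_Linfty} not to the raw symbol $J^s\mathcal T_\mu^{-1/2}(D)$ (which is only in $\mathtt S^{s+1/2}$, not $\mathtt S^{s}$) but to the ``renormalized'' symbol
\begin{align*}
  \sigma(D) := J^s \mathcal T_\mu^{-\frac12}(D) J_\mu^{-\frac12}, \qquad \text{i.e.} \qquad \sigma(\xi) = \jbrack{\xi}^s \,\mathcal T_\mu^{-\frac12}(\xi)\, \jbrack{\sqrt\mu\,\xi}^{-\frac12}.
\end{align*}
The first key step is to verify that $\sigma \in \mathtt S^s$ with $\mathcal N^s(\sigma)\lesssim 1$ uniformly in $\mu\in(0,1]$. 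This follows from the Leibniz rule exactly as in the proof of Lemma \ref{lem:commutator_est_tilbert}: by Corollary \ref{cor:est_on_Tilbert_symbol} one has $|\partial^\alpha \mathcal T_\mu^{-1/2}(\xi)| \lesssim \mu^{|\alpha|/2}\jbrack{\sqrt\mu\,\xi}^{1/2-|\alpha|}$, and the factor $\jbrack{\sqrt\mu\,\xi}^{-1/2}$ (together with its derivatives, which gain $\mu^{1/2}\jbrack{\sqrt\mu\,\xi}^{-1}$ each) exactly kills the $\jbrack{\sqrt\mu\,\xi}^{1/2}$ surplus, leaving a symbol whose behaviour matches that of $J^s\sqrt{\mathcal T_\mu}$; the powers of $\mu$ balance against the negative powers of $\jbrack{\sqrt\mu\,\xi}$ precisely because $\mu\le 1$.

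The second step is to write the commutator in terms of $\sigma$. Since $J^s\mathcal T_\mu^{-1/2}(D) = \sigma(D) J_\mu^{1/2}$, we have
\begin{align*}
  [J^s\mathcal T_\mu^{-\frac12}(D),f]g = [\sigma(D)J_\mu^{\frac12},f]g = \sigma(D)\bigl[J_\mu^{\frac12},f\bigr]g + [\sigma(D),f]\,J_\mu^{\frac12}g.
\end{align*}
For the second term I apply \eqref{eq:gen_kato_ponce_Linfty} with this $\sigma$ and with $J_\mu^{1/2}g$ in place of $g$, which yields $\norm{\nabla f}_{L^\infty_x}\norm{J_\mu^{1/2}g}_{H^{s-1}_x}+\norm{f}_{H^s_x}\norm{J_\mu^{1/2}g}_{L^\infty_x}$; the first summand is already of the claimed form, and for the second I need $\norm{J_\mu^{1/2}g}_{L^\infty_x}\lesssim\norm{g}_{L^\infty_x}$, which is false pointwise but can be avoided — instead I should put $J_\mu^{1/2}$ on $f$ rather than on $g$. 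So the cleaner route is to first commute $J_\mu^{1/2}$ past $f$: write $[J^s\mathcal T_\mu^{-1/2}(D),f]g$ directly and note that the symbol $J^s\mathcal T_\mu^{-1/2}$ differs from $J^s J_\mu^{1/2}\cdot(\text{a symbol in }\mathtt S^0)$; applying \eqref{eq:gen_kato_ponce_Linfty} to $\widetilde\sigma(D):=J^s J_\mu^{1/2}\mathcal T_\mu^{-1/2}(D) J^{-s}J_\mu^{-1/2} \in \mathtt S^0$ after the substitution $\widetilde f = J_\mu^{1/2}f$ is the mechanism that produces $\norm{J^s J_\mu^{1/2}f}_{L^2_x}$ on the right. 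I would carry this out by the same symbol-class bookkeeping as above, checking $\mathcal N^0(\widetilde\sigma)\lesssim 1$ uniformly in $\mu$ via Lemma \ref{lem:derivatives_Tilbert}, Corollary \ref{cor:est_on_Tilbert_symbol}, and the Leibniz rule.

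\textbf{Main obstacle.} The delicate point is the placement of the $J_\mu^{1/2}$ factor: it must land on $f$ in the first term of \eqref{eq:commutator_est_tilbert_inv} and on $g$ in the second, matching the asymmetric structure of \eqref{eq:gen_kato_ponce_Linfty}. Getting this right requires decomposing $\mathcal T_\mu^{-1/2}$ not as $\sqrt{\mathcal T_\mu}^{-1}$ naively but as a genuinely $\mathtt S^{1/2}$ symbol whose ``loss of half a derivative'' is uniformly $\mu$-weighted, so that the two terms on the right of \eqref{eq:commutator_est_tilbert_inv} come out with exactly the weights $J^{s-1}J_\mu^{1/2}$ and $J^sJ_\mu^{1/2}$ respectively. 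Once the correct renormalized symbol is identified and shown to lie in the right symbol class with $\mu$-uniform seminorm, the rest is a direct invocation of Lemma \ref{lem:gen_kato_ponce} and is entirely routine; no Gronwall or fixed-point machinery is needed here.
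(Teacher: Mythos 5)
Your proposal has a genuine gap, and you half-identify it yourself but the proposed fix is not an actual argument. The symbol-class computation in your first step is fine: $\sigma(\xi)=\jbrack{\xi}^s\mathcal{T}_\mu^{-\frac12}(\xi)\jbrack{\sqrt\mu\,\xi}^{-\frac12}$ does lie in $\mathtt S^s$ with $\mu$-uniform seminorm, by Lemma \ref{lem:derivatives_Tilbert} and the bound $\mu^{\frac12}\jbrack{\xi}\lesssim\jbrack{\sqrt\mu\,\xi}$. The problem is what comes after. The Leibniz expansion $[\sigma(D)J_\mu^{\frac12},f]g=\sigma(D)[J_\mu^{\frac12},f]g+[\sigma(D),f]J_\mu^{\frac12}g$ fails twice over: in the second piece, \eqref{eq:gen_kato_ponce_Linfty} produces $\norm{f}_{H^s_x}\norm{J_\mu^{1/2}g}_{L^\infty_x}$, and $\norm{J_\mu^{1/2}g}_{L^\infty_x}$ is not controlled by $\norm{g}_{L^\infty_x}$ (as you note); in the first piece, $\sigma(D)$ has order $s$, so you would need $[J_\mu^{1/2},f]g$ in $H^s$, which is a commutator estimate of total order $s+\frac12$ — exactly the problem you started with. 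Your attempted repair, "apply \eqref{eq:gen_kato_ponce_Linfty} to $\widetilde\sigma$ after the substitution $\widetilde f=J_\mu^{1/2}f$", is not a well-defined operation: there is no operator identity converting $[\tau(D),f]$ into a commutator against $J_\mu^{1/2}f$, and the $\widetilde\sigma$ you write down is just $\mathcal{T}_\mu^{-1/2}(D)$ again (Fourier multipliers commute), which is in $\mathtt S^{1/2}$, not $\mathtt S^0$. So the asymmetric placement of $J_\mu^{1/2}$ — on $g$ in the first term of \eqref{eq:commutator_est_tilbert_inv}, on $f$ in the second — is never actually achieved.

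The paper's proof takes a different and genuinely workable route: it splits $\mathcal{T}_\mu^{-\frac12}(D)$ with smooth cutoffs at $\sqrt\mu|\xi|\sim 1$. On the low-frequency piece, $J^s(\chi^{(1)}_\mu\mathcal{T}_\mu^{-\frac12})\in\mathtt S^s$ uniformly in $\mu$, so Lemma \ref{lem:gen_kato_ponce} applies at order $s$ and gives the unweighted bound, which is dominated by the weighted one. On the high-frequency piece, $J^s(\chi^{(2)}_\mu\mathcal{T}_\mu^{-\frac12})\in\mathtt S^{s+\frac12}$ with seminorm $\mathcal N^{s+\frac12}\lesssim\mu^{\frac14}$; applying Lemma \ref{lem:gen_kato_ponce} at order $s+\frac12$ then yields $\mu^{\frac14}\bigl(\norm{\nabla f}_{L^\infty_x}\norm{g}_{H^{s-1/2}_x}+\norm{\nabla f}_{H^{s-1/2}_x}\norm{g}_{L^\infty_x}\bigr)$, and the elementary inequality $\mu^{\frac14}\jbrack{\xi}^{\frac12}\le\jbrack{\sqrt\mu\,\xi}^{\frac12}$ converts the prefactor and the extra half derivative into the $J_\mu^{1/2}$ weight on whichever factor carries the Sobolev norm. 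This is the mechanism that produces the correct asymmetric right-hand side, and it is what your renormalization scheme is missing. If you want to salvage your approach, replace the factorization $\sigma(D)J_\mu^{1/2}$ by this low/high splitting (or prove a $\mu$-weighted version of Lemma \ref{lem:gen_kato_ponce} for symbols of order $s+\frac12$ directly).
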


\begin{proof}
Let $\chi^{(j)} \in \mathcal{S}(\R^d)$  be Fourier multipliers for $j=1,2$, that is,
    \begin{align*}
        \mathcal{F}(\chi^{(j)}(D)f)(\xi) = \chi^{(j)}(|\xi|) \widehat{f}(\xi)
    \end{align*}
    for all $f\in \mathcal{S}(\R^d)$ such that
    \begin{align*}
        0\leq \chi^{(j)}(|\xi|) \leq 1, \ \ \ \chi^{(1)}(|\xi|) + \chi^{(2)}(|\xi|) = 1
    \end{align*}
    on $\R^d$ with the support 
    \begin{align*}
        \mathrm{supp} \: \chi^{(1)}(|\xi|) \subset [-2,2], \ \ \ \mathrm{supp} \: \chi^{(2)}(|\xi|) \subset \R\backslash[-1,1].
    \end{align*}
    We let
    \begin{align*}
        \chi^{(j)}_\mu(|\xi|) := \chi^{(j)}(\sqrt{\mu}|\xi|)
    \end{align*}
    be the scaled version.
  With these smooth cut-off functions, we can write
    \begin{align*}
        \mathcal{T}_{\mu}^{-\frac{1}{2}}(D) &=(\chi^{(1)}_\mu \mathcal{T}_{\mu}^{-\frac{1}{2}})(D) + (\chi^{(2)}_\mu \mathcal{T}_{\mu}^{-\frac{1}{2}})(D) .
    \end{align*}
     It then follows that
    % \begin{align*}
     %   [ J^s \mathcal{T}_{\mu}^{-\frac{1}{2}}(D) , f]g 
      %  &= [ J^s (\chi^{(1)}_\mu \mathcal{T}_{\mu}^{-\frac{1}{2}})(D) , f]g 
      %  +[ J^s (\chi^{(2)}_\mu \mathcal{T}_{\mu}^{-\frac{1}{2}})(D) , f]g , 
    % \end{align*}
    % which implies
     \begin{align}\label{eq:commutator_part_123}
         \norm{[ J^s \mathcal{T}_{\mu}^{-\frac{1}{2}}(D) , f]g }_{L^2_x} 
         &\leq \norm{[J^s (\chi^{(1)}_\mu \mathcal{T}_{\mu}^{-\frac{1}{2}})(D) , f]g}_{L^2_x} 
         +\norm{[J^s (\chi^{(2)}_\mu \mathcal{T}_{\mu}^{-\frac{1}{2}})(D) , f]g}_{L^2_x} 
         \\ \notag
         &=: \mathscr{A}_1 + \mathscr{A}_2.
     \end{align}
     To obtain \eqref{eq:commutator_est_tilbert_inv}, we will estimate each of these terms separately, utilizing \eqref{eq:gen_kato_ponce_Linfty}.
     \SkipTocEntry\subsubsection*{Control of $\mathscr{A}_1$ in \eqref{eq:commutator_part_123}}
     We start by estimating $\mathscr{A}_1$, i.e., verifying that $J^s(\chi^{(1)}_\mu \mathcal{T}_{\mu}^{-\frac{1}{2}})\in \mathtt{S}^{s}$ and $\mathcal{N}^s(J^s(\chi^{(1)}_\mu \mathcal{T}_{\mu}^{-\frac{1}{2}})) \lesssim 1$ uniformly in $\mu \in (0,1]$. We first make the observation that 
    \begin{align}\label{eq:chi_est}
        \mu^\frac{|\gamma|}{2}\jbrack{\xi}^{|\gamma|} \Big|\Big( \frac{\partial^\gamma}{\partial \xi^\gamma}\chi_\mu^{(1)}\Big)(\sqrt{\mu}|\xi|) \Big| \lesssim 1
    \end{align}
    for  $\gamma \in \N^d$. Additionally, on the support of $\chi^{(1)}_\mu$ and its derivatives, the estimate \eqref{eq:derivatives_Tilbert_inv} becomes
    \begin{align*}
        \Big| \frac{\partial^\gamma}{\partial\xi^\gamma}\frac{1}{\sqrt{\mathcal{T}_\mu(\xi)}}  \Big| & \lesssim \mu^{\frac{|\gamma|}{2}} \jbrack{\sqrt{\mu}\xi}^{\frac{1}{2} - |\gamma|} \lesssim \jbrack{\xi}^{-|\gamma|}.
    \end{align*}
    Then, the Leibniz rule gives us
    \begin{align*}
        \Big| \frac{\partial^\gamma}{\partial\xi^\gamma} \Big(\chi^{(1)}_\mu(|\xi|)\frac{1}{\sqrt{\mathcal{T}_\mu(\xi)}} \Big) \Big|% &\lesssim \sum_{\kappa \in \N^d \: :  \: \kappa \leq \gamma} \Big| \frac{\partial^{\gamma-\kappa}}{\partial\xi^{\gamma-\kappa}}\chi^{(1)}_\mu(|\xi|) \Big|\Big| \frac{\partial^\kappa}{\partial\xi^\kappa}\frac{1}{\sqrt{\mathcal{T}_\mu(\xi)}}  \Big|\\
        &\lesssim \sum_{\kappa \in \N^d \: :  \: \kappa \leq \gamma} \mu^\frac{|\gamma|-|\kappa|}{2} \Big|\Big( \frac{\partial^{\gamma-\kappa}}{\partial \xi^{\gamma-\kappa}}\chi^{(1)}\Big)(\sqrt{\mu}|\xi|) \Big| \jbrack{\xi}^{-|\kappa|}.
    \end{align*}
    With the above estimate, the Leibniz rule, and \eqref{eq:chi_est}, we obtain
    \begin{align*}
        \jbrack{\xi}^{|\alpha|-s} \Big| \frac{\partial^\alpha}{\partial\xi^\alpha} \Big(\jbrack{\xi}^s \chi^{(1)}_\mu\frac{1}{\sqrt{\mathcal{T}_\mu(\xi)}}\Big)  \Big|
        %&\lesssim  \sum_{\gamma\in\N^d \: :\: \gamma\leq \alpha}  \jbrack{\xi}^{|\gamma|}  \sum_{\kappa\in\N^d \: :\: \kappa\leq \gamma} \mu^\frac{|\gamma|-|\kappa|}{2} \Big|\Big( \frac{\partial^{\gamma-\kappa}}{\partial \xi^{\gamma-\kappa}}\chi^{(1)}\Big)(\sqrt{\mu}|\xi|) \Big| \jbrack{\xi}^{-|\kappa|}  \\
        &\lesssim \sum_{\gamma\leq \alpha}    \sum_{\kappa\leq \gamma} \mu^\frac{|\gamma|-|\kappa|}{2} \Big|\Big( \frac{\partial^{\gamma-\kappa}}{\partial \xi^{\gamma-\kappa}}\chi^{(1)}\Big)(\sqrt{\mu}|\xi|) \Big| \jbrack{\xi}^{|\gamma|-|\kappa|}  \lesssim 1.
    \end{align*}
    Then \eqref{eq:gen_kato_ponce_Linfty} yields 
    \begin{align}\label{eq:B1}
        \mathscr{A}_1 &\lesssim \norm{\nabla f}_{L^\infty_x}\norm{J^{s-1}  g}_{L^2_x} + \norm{J^s f}_{L^2_x}\norm{g}_{L^\infty_x}\\ \notag
        &\lesssim  \norm{\nabla f}_{L^\infty_x} \norm{J^{s-1} J^\frac{1}{2}_\mu g}_{L^2_x} + \norm{J^s J^\frac{1}{2}_\mu f}_{L^2_x}\norm{ g}_{L^\infty_x}.
    \end{align} 
    \SkipTocEntry\subsubsection*{Control of $\mathscr{A}_2$ in \eqref{eq:commutator_part_123}}
    The term $\mathscr{A}_2$ in \eqref{eq:commutator_part_123} will also be estimated with the help of  Lemma \ref{lem:gen_kato_ponce}. To verify that $J^s(\chi^{(2)}_\mu \mathcal{T}_{\mu}^{-\frac{1}{2}}) \in \mathtt{S}^{s+\frac{1}{2}}$, it suffices to show that
    \begin{align*}
        \sup_{\xi\in\R^d}  \jbrack{\xi}^{|\alpha| - (s+\frac{1}{2})} \Big| \frac{\partial^\alpha}{\partial\xi^\alpha} \Big(\jbrack{\xi}^s \chi^{(2)}_\mu \frac{1}{\sqrt{\mathcal{T}_\mu(\xi)}}\Big) \Big|  \lesssim \mu^\frac{1}{4}.
    \end{align*}
    We first write
    \begin{align*}
        \frac{1}{\sqrt{\mathcal{T}_\mu(\xi)}}&= \mu^\frac{1}{4}|\xi|^\frac{1}{2}
        \frac{1}{(\tanh(\sqrt{\mu}|\xi|))^\frac{1}{2}}.
    \end{align*}
    Then, on the support of $\chi^{(2)}$ and its derivative, we have that $\frac{1}{(\tanh(\sqrt{\mu}|\xi|))^\frac{1}{2}}$ and its derivatives are uniformly bounded, and $|\xi| \geq \sqrt{\mu}|\xi|>1$. In other words, $J^s(\chi^{(2)}_\mu \mathcal{T}_{\mu}^{-\frac{1}{2}}) \in \mathtt{S}^{s+\frac{1}{2}}$ and $\mathcal{N}^{s+\frac{1}{2}}(J^s(\chi^{(2)}_\mu \mathcal{T}_{\mu}^{-\frac{1}{2}}))\lesssim \mu^\frac{1}{4}$. Thus, we conclude by \eqref{eq:gen_kato_ponce_Linfty} that
    \begin{align}\label{eq:B3}
       \mathscr{A}_2 &\lesssim
        \mu^\frac{1}{4} \norm{\nabla f}_{L^\infty_x}\norm{g}_{H^{s-\frac{1}{2}}_x} + \mu^\frac{1}{4}\norm{\nabla f}_{H^{s-\frac{1}{2}}_x}\norm{ g}_{L^\infty_x}
        \\ \notag
        &\lesssim \norm{\nabla f}_{L^\infty_x} \norm{J^{s-1} J^\frac{1}{2}_\mu g}_{L^2_x} + \norm{J^s J^\frac{1}{2}_\mu f}_{L^2_x}\norm{ g}_{L^\infty_x}.
    \end{align}
    We complete the proof of \eqref{eq:commutator_est_tilbert_inv} by combining \eqref{eq:commutator_part_123} with \eqref{eq:B1} and \eqref{eq:B3}. 

  %  Lastly, we will prove \eqref{eq:commutator_est_NEW} by utilizing \eqref{eq:gen_kato_ponce_L2}. From the above calculations, we can obtain that $\chi^{(1)}_\mu \mathcal{T}_{\mu}^{-\frac{1}{2}}\in \mathtt{S}^{0}$ and $\mathcal{N}^0(\chi^{(1)}_\mu \mathcal{T}_{\mu}^{-\frac{1}{2}}) \lesssim 1$ uniformly in $\mu \in (0,1]$, and $\chi^{(2)}_\mu \mathcal{T}_{\mu}^{-\frac{1}{2}} \in \mathtt{S}^{\frac{1}{2}}$ and $\mathcal{N}^\frac{1}{2}(\chi^{(2)}_\mu \mathcal{T}_{\mu}^{-\frac{1}{2}})\lesssim \mu^\frac{1}{4}$. Consequently, taking  $s=0$ in \eqref{eq:commutator_part_123}, we have from \eqref{eq:gen_kato_ponce_L2} that 
   % \begin{align*}
    %   \norm{[ \mathcal{T}_{\mu}^{-\frac{1}{2}}(D) , f]g }_{L^2_x} 
     %    &\leq \norm{[(\chi^{(1)}_\mu \mathcal{T}_{\mu}^{-\frac{1}{2}})(D) , f]g}_{L^2_x} 
      %   +\norm{[ (\chi^{(2)}_\mu \mathcal{T}_{\mu}^{-\frac{1}{2}})(D) , f]g}_{L^2_x} 
       %  \\ 
       %  &\lesssim  \norm{f}_{H^{t_0+1}} \norm{ g}_{L^2_x}+  \norm{f}_{H^{t_0+1}} \norm{J^\frac{1}{2}_\mu  g}_{L^2_x}\\
      %   &\lesssim \norm{f}_{H^{t_0+1}} \norm{J^\frac{1}{2}_\mu  g}_{L^2_x}
    %\end{align*}
    %as desired.
\end{proof}
%We will also need the following commutator estimate, which was proved in  \cite[Remark 1.3]{Li2019}.
%\begin{prop}%\label{prop:commentator_est_dawson}
  %  For any $p\in (1,\infty)$ and $f,g \in \mathcal{S}(\R^d )$. %For $d=1$, it holds 
    %\begin{align}\label{eq:commentator_est_dawson}
    %    \norm{[|D|,f]g}_{L^p_x} &\lesssim \norm{\partial_x f}_{L^\infty_x} \norm{g}_{L^p_x},
    %\end{align}
    %and for $d=2$, 
 %   Then we have for $\alpha\in(0,1]$ that
 %    \begin{align}\label{eq:commentator_est_li}
 %       \norm{[|D|^\alpha,f]g}_{L^p_x} &\lesssim \norm{|D|^\alpha f}_{L^p_x} \norm{g}_{L^\infty_x}.
 %   \end{align}
%\end{prop}

The next estimates are found in \cite[Corollary 2.6]{Paulsen2022}.
\begin{cor}%\label{cor:est_on_Tilbert}
    Let $f\in \mathcal{S}(\R^d)$, $\mu \in (0,1]$, and $s\in \R$. Then
    \begin{align}\label{eq:est_on_tilbert}
        \norm{\sqrt{\mathcal{T}_\mu}(D) f}_{L^2_x} &\leq \norm{f}_{L^2_x},
        \\
        \label{eq:est_on_tilbert_J_mu}
        \norm{\sqrt{\mathcal{T}_\mu}(D)J^\frac{1}{2}_\mu f}_{L^2_x}  &\sim \norm{f}_{L^2_x},
       \\% \intertext{and}
        \label{eq:J_mu_to_f_H_s}
       \norm{J^\frac{1}{2}_\mu f}_{H^s}^2 &\sim \norm{f}_{H^s_x}^2 + \sqrt{\mu} \norm{|D|^\frac{1}{2} f}_{H^s_x}^2,
    \intertext{and}
        \label{eq:est_on_tilbert_invers}
        \norm{\mathcal{T}_{\mu}^{-\frac{1}{2}}(D) f}_{H^s_x}^2 &\sim \norm{f}_{H^s_x}^2 + \sqrt{\mu} \norm{|D|^\frac{1}{2} f}_{H^s_x}^2.
    \end{align}
\end{cor}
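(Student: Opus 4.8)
The plan is to prove all four estimates simultaneously by passing to the Fourier side via Plancherel and reducing each one to a pointwise comparison of multipliers; the pointwise information needed is already recorded in Lemma~\ref{lem:derivatives_Tilbert}. The one elementary ingredient I would use repeatedly is the $\mu$-uniform equivalence
\[
  \jbrack{\sqrt{\mu}\xi} = \bigl(1+\mu\abs{\xi}^2\bigr)^{1/2} \sim 1 + \sqrt{\mu}\abs{\xi} \qquad (\xi\in\R^d,\ \mu\in(0,1]),
\]
which follows from $1+\mu\abs{\xi}^2\le(1+\sqrt{\mu}\abs{\xi})^2$ together with the arithmetic--geometric inequality $2\sqrt{\mu}\abs{\xi}\le 1+\mu\abs{\xi}^2$. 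With this in hand, \eqref{eq:est_on_tilbert} is immediate: since $\tanh t\le t$ for $t\ge 0$ we have $0<\mathcal{T}_\mu(\xi)\le 1$ for all $\xi$, so $\norm{\sqrt{\mathcal{T}_\mu}(D)f}_{L^2_x}^2=\int_{\R^d}\mathcal{T}_\mu(\xi)\abs{\widehat f(\xi)}^2\d\xi\le\norm{f}_{L^2_x}^2$.

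For \eqref{eq:est_on_tilbert_J_mu} and \eqref{eq:est_on_tilbert_invers} I would invoke the first bound of Lemma~\ref{lem:derivatives_Tilbert}, namely $\mathcal{T}_\mu(\xi)\jbrack{\sqrt{\mu}\xi}\sim 1$. The multiplier of $\sqrt{\mathcal{T}_\mu}(D)J^{1/2}_\mu$ is $\sqrt{\mathcal{T}_\mu(\xi)}\,\jbrack{\sqrt{\mu}\xi}^{1/2}$, whose square is exactly $\mathcal{T}_\mu(\xi)\jbrack{\sqrt{\mu}\xi}\sim 1$, so Plancherel gives $\norm{\sqrt{\mathcal{T}_\mu}(D)J^{1/2}_\mu f}_{L^2_x}^2\sim\norm{f}_{L^2_x}^2$, which is \eqref{eq:est_on_tilbert_J_mu}. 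Likewise the multiplier of $\mathcal{T}_\mu^{-1/2}(D)$ has square $\mathcal{T}_\mu(\xi)^{-1}\sim\jbrack{\sqrt{\mu}\xi}$, whence
\[
  \norm{\mathcal{T}_{\mu}^{-\frac12}(D)f}_{H^s_x}^2 = \int_{\R^d}\jbrack{\xi}^{2s}\mathcal{T}_\mu(\xi)^{-1}\abs{\widehat f(\xi)}^2\d\xi \sim \int_{\R^d}\jbrack{\xi}^{2s}\jbrack{\sqrt{\mu}\xi}\abs{\widehat f(\xi)}^2\d\xi = \norm{J^{\frac12}_\mu f}_{H^s}^2,
\]
so \eqref{eq:est_on_tilbert_invers} reduces to \eqref{eq:J_mu_to_f_H_s}.

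Finally, \eqref{eq:J_mu_to_f_H_s} follows by combining the elementary equivalence above with Plancherel: $\jbrack{\xi}^{2s}\jbrack{\sqrt{\mu}\xi}\sim\jbrack{\xi}^{2s}+\sqrt{\mu}\abs{\xi}\jbrack{\xi}^{2s}$ with constants independent of $\mu$, and integrating against $\abs{\widehat f(\xi)}^2$ yields $\norm{f}_{H^s_x}^2+\sqrt{\mu}\norm{\abs{D}^{1/2}f}_{H^s_x}^2$. I do not expect any genuine obstacle here; the only point that requires a little care is to keep every implicit constant independent of $\mu\in(0,1]$, which is guaranteed precisely because both the equivalence $\jbrack{\sqrt{\mu}\xi}\sim 1+\sqrt{\mu}\abs{\xi}$ and the bound $\mathcal{T}_\mu(\xi)\jbrack{\sqrt{\mu}\xi}\sim 1$ of Lemma~\ref{lem:derivatives_Tilbert} are uniform in $\mu$.
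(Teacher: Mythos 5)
Your proof is correct. The paper itself does not prove this corollary but simply cites \cite[Corollary 2.6]{Paulsen2022}; your argument is the standard one that reference uses, namely Plancherel reducing everything to pointwise multiplier comparisons, using $\tanh t\le t$ for \eqref{eq:est_on_tilbert}, the equivalence $\mathcal{T}_\mu(\xi)\jbrack{\sqrt{\mu}\xi}\sim 1$ from Lemma \ref{lem:derivatives_Tilbert} for \eqref{eq:est_on_tilbert_J_mu} and \eqref{eq:est_on_tilbert_invers}, and the $\mu$-uniform equivalence $\jbrack{\sqrt{\mu}\xi}\sim 1+\sqrt{\mu}\abs{\xi}$ for \eqref{eq:J_mu_to_f_H_s}. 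All steps and uniformity claims check out.
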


We also need the following standard properties of the Riesz transformation.
\begin{lem}\label{lem:riesz_transform}
    For $1<p<\infty$, it holds that
    \begin{align}\label{eq:riesz_transform_est}
        \norm{R_jf}_{L^p_x} \lesssim \norm{f}_{L^p_x}
    \end{align}
    for $j=1,2$ and
    \begin{align}\label{eq:riesz_transform_property}
        |D| &= \mathbf{R} \cdot \nabla.
    \end{align}
\end{lem}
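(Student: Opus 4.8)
The statement to prove is Lemma \ref{lem:riesz_transform}, concerning the standard properties of the Riesz transform: the $L^p$-boundedness \eqref{eq:riesz_transform_est} and the operator identity \eqref{eq:riesz_transform_property}. Both are classical, so the plan is to recall the standard arguments rather than develop anything new.

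\medskip

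\textbf{Plan.} For the identity \eqref{eq:riesz_transform_property}, the approach is purely symbolic: compute the Fourier multiplier of $\mathbf{R}\cdot\nabla = \sum_{j=1}^2 R_j \partial_{x_j}$. Since $\widehat{R_j f}(\xi) = -i\frac{\xi_j}{|\xi|}\widehat{f}(\xi)$ and $\widehat{\partial_{x_j} f}(\xi) = i\xi_j \widehat{f}(\xi)$, composing gives the multiplier $\sum_{j=1}^2 \bigl(-i\frac{\xi_j}{|\xi|}\bigr)(i\xi_j) = \sum_{j=1}^2 \frac{\xi_j^2}{|\xi|} = \frac{|\xi|^2}{|\xi|} = |\xi|$, which is exactly the multiplier of $|D|$. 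Since this holds for all $f \in \mathcal{S}(\R^2)$ and $|D|$ maps $\mathcal{S}$ into, say, tempered distributions in the same way, the identity \eqref{eq:riesz_transform_property} follows. For \eqref{eq:riesz_transform_est}, I would invoke the classical Calder\'on–Zygmund theory: the symbol $-i\xi_j/|\xi|$ is homogeneous of degree $0$ and smooth away from the origin, so $R_j$ is a Calder\'on–Zygmund operator, hence bounded on $L^p(\R^d)$ for all $1 < p < \infty$. One could either cite a standard reference (e.g.\ Stein's \emph{Singular Integrals}) or sketch the argument: $R_j$ is bounded on $L^2$ by Plancherel since the symbol is bounded, it is of weak type $(1,1)$ because its convolution kernel $c_d\, x_j/|x|^{d+1}$ satisfies the H\"ormander condition, and then the Marcinkiewicz interpolation theorem together with a duality argument yields boundedness on all $L^p$ with $1<p<\infty$.

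\medskip

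\textbf{Main obstacle.} There is essentially no obstacle here, as both facts are textbook results; the only ``work'' is presenting them at the right level of detail for the paper. The $L^p$-boundedness \eqref{eq:riesz_transform_est} is the more substantive of the two, but it is entirely standard Calder\'on–Zygmund theory, so the cleanest route is to cite it. I would therefore write a short proof: give the one-line Fourier-side computation for \eqref{eq:riesz_transform_property}, and for \eqref{eq:riesz_transform_est} note that $R_j$ is a classical Calder\'on–Zygmund singular integral operator and refer to a standard source.

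\medskip

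\textbf{Proof.}

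\begin{proof}
The identity \eqref{eq:riesz_transform_property} follows by computing Fourier multipliers. For $f\in \mathcal{S}(\R^2)$ and $j=1,2$, we have $\widehat{R_j f}(\xi) = -i\frac{\xi_j}{|\xi|}\widehat{f}(\xi)$ and $\widehat{\partial_{x_j} f}(\xi) = i\xi_j\widehat{f}(\xi)$, so
\begin{align*}
    \widehat{\mathbf{R}\cdot \nabla f}(\xi) = \sum_{j=1}^2 \Big(-i\frac{\xi_j}{|\xi|}\Big)\big(i\xi_j\big)\widehat{f}(\xi) = \frac{1}{|\xi|}\sum_{j=1}^2 \xi_j^2\, \widehat{f}(\xi) = |\xi|\,\widehat{f}(\xi) = \widehat{|D|f}(\xi),
\end{align*}
which proves \eqref{eq:riesz_transform_property}.

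For \eqref{eq:riesz_transform_est}, each $R_j$ is the Fourier multiplier operator associated with the symbol $-i\xi_j/|\xi|$, which is smooth on $\R^d\setminus\{0\}$ and homogeneous of degree $0$; hence $R_j$ is a classical Calder\'on--Zygmund singular integral operator, given (up to a constant) by convolution with the principal-value kernel $c_d\,x_j/|x|^{d+1}$. In particular $R_j$ is bounded on $L^2(\R^d)$ by Plancherel's theorem, since the symbol is bounded, and its kernel satisfies the H\"ormander regularity condition, so $R_j$ is of weak type $(1,1)$. By the Marcinkiewicz interpolation theorem, $R_j$ is bounded on $L^p(\R^d)$ for $1<p\le 2$, and the range $2\le p<\infty$ follows by duality (the formal adjoint of $R_j$ is $-R_j$). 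This gives \eqref{eq:riesz_transform_est} for all $1<p<\infty$; see, e.g., \cite[Chapter II]{Stein1970} or \cite[Chapter III]{Stein1993}.
\end{proof}
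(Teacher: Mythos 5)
Your proof is correct: the Fourier-side computation for $|D|=\mathbf{R}\cdot\nabla$ and the Calder\'on--Zygmund argument for the $L^p$ bounds are the standard textbook arguments. The paper states this lemma without proof, treating both facts as classical, so your write-up is consistent with (indeed more detailed than) what the paper provides; just note that the references you cite are not in the paper's bibliography.
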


\subsection{Energy estimates for \eqref{eq:whitham_boussinesq_d2}} \label{sec:energy_ests_d_2}
As a consequence of the curl-free condition on the initial datum in Theorem \ref{thm:wellposedness_WB}, we can obtain that $\mathrm{curl} \:\mathbf{v} = 0$. This in turn implies that the system \eqref{eq:whitham_boussinesq_d2} has the same structure as \eqref{eq:whitham_boussinesq}. Indeed, first applying the curl on the second equation in \eqref{eq:whitham_boussinesq_d2}, and then using the fundamental theorem of calculus and $\mathrm{curl} \:\mathbf{v}_0 = 0$, we obtain $\mathrm{curl} \:\mathbf{v} = 0$. In other words, we have the following relation
\begin{align}\label{eq:curl_free_cond}
    \partial_{x_2} v_1 = \partial_{x_1} v_2.
\end{align}
As mentioned in Section \ref{sec:strategy_outline}, we aim to sharpen the energy estimates presented in \cite{Paulsen2022}. We recall that the energy associated to \eqref{eq:whitham_boussinesq_d2} from Definition \ref{def:energy} is of the from
\begin{align}\label{eq:energy_func_d_2}
     E_s(\eta,\mathbf{v}) = \int_{\R^2} \Big\{ \Big( J^s \eta \Big)^2 + (1+\epsilon\eta)\Big|J^s \mathcal{T}_{\mu}^{-\frac{1}{2}}(D) \mathbf{v} \Big|^2 \Big\} \d x.
\end{align}
Let $\mathbf{U}=(\eta,\mathbf{v})^\mathrm{T}$, then the curl-free condition \eqref{eq:curl_free_cond} allows us to rewrite \eqref{eq:whitham_boussinesq_d2} as
\begin{align}\label{eq:compact_eq_d_2}
    \partial_t \mathbf{U} + \mathcal{M}(\mathbf{U},D)\mathbf{U} &= \mathbf{0},
\end{align}
where 
\begin{align}\label{eq:M_d_2}
    \mathcal{M}(\mathbf{U},D) = \begin{pmatrix}
       \epsilon \mathbf{v}\cdot \nabla & (1+\epsilon\eta)\partial_{x_1} &(1+\epsilon\eta)\partial_{x_2}\\
        \mathcal{T}_\mu(D)\partial_{x_1} & \epsilon(\mathbf{v}\cdot \nabla) & 0\\
        \mathcal{T}_\mu(D)\partial_{x_2} &  0 & \epsilon(\mathbf{v}\cdot \nabla)
    \end{pmatrix}.
\end{align}
The symmetrizer for \eqref{eq:compact_eq_d_2} is defined by
\begin{align}\label{eq:Q_d_2}
    \mathcal{Q}(\mathbf{U},D) = \begin{pmatrix}
      \mathcal{T}_\mu(D)& 0 &0\\
     0&  1+\epsilon\eta &0\\
     0& 0& 1+\epsilon\eta 
    \end{pmatrix}
\end{align}
which lets us write the energy in \eqref{eq:energy_func_d_2} as
\begin{align}\label{eq:energy_d_2}
    E_s (\mathbf{U})&= \scalarprod{J^s \mathcal{T}_{\mu}^{-\frac{1}{2}}(D)\mathbf{U}}{\mathcal{Q}(\mathbf{U},D)J^s \mathcal{T}_{\mu}^{-\frac{1}{2}}(D)\mathbf{U} }.
\end{align}
With this, we can find the following energy estimates from \eqref{eq:compact_eq_d_2}. This is summarized in the proposition below.
\begin{prop}\label{prop:energy_ests_d_2}
    Let $s >2$, $\epsilon,\mu \in(0,1]$, $r\geq s+1$, and $(\eta,\mathbf{v}) \in C([0,T];V_\mu^r(\R^2))$ be a solution to \eqref{eq:compact_eq_d_2} on a time interval $[0,T]$ for some $T>0$. Assume that there exists  a $h_0\in(0,1)$ such that
    \begin{align}\label{eq:assump_eta1_v1_d_2}
        h_0 - 1 \leq  \epsilon \eta(x,t) \  \text{ for all } (x,t)\in \R^2 \times [0,T] .
    \end{align}
    Additionally, assume that $\mathrm{curl}\:  \mathbf{v} = 0$.
    Then, the energy defined in \eqref{eq:energy_d_2} satisfies
    \begin{align}\label{eq:energy_sim_V_d_2} 
       h_0\norm{(\eta,\mathbf{v})(t)}_{V_\mu^s}^2 \lesssim E_s(\mathbf{U}(t)) \lesssim H(t) \norm{(\eta,\mathbf{v})(t)}_{V_\mu^s}^2 
    \end{align}
    and
    \begin{align}
        \label{eq:derivative_energy_d_2}
        \frac{\d}{\d t} E_s(\mathbf{U}(t)) &\lesssim  \epsilon H(t) \mathcal{P}(t)  E_s(\mathbf{U}(t))
    \end{align}
    for all $t\in (0,T)$, where
    \begin{align}\label{eq:P}
        \mathcal{P}(t) &:= \norm{\nabla \eta(t)}_{L^\infty_x} +\norm{\nabla \mathbf{v}(t)}_{L^\infty_x}  + \norm{\mathcal{T}_{\mu}^{-\frac{1}{2}}(D)\nabla \mathbf{v}(t)}_{L^\infty_x} 
        \intertext{and}
        H(t) &:= 1+ \epsilon\norm{(\eta,\mathbf{v})(t)}_{L^\infty_x \times L^\infty_x}.
    \end{align}
    %where the implicit constants may depend on $h_0$ and $h_1$.
\end{prop}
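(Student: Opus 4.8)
The plan is to split the proof into two parts: the norm equivalence \eqref{eq:energy_sim_V_d_2}, and the differential inequality \eqref{eq:derivative_energy_d_2}. For the norm equivalence, I would start from the expression \eqref{eq:energy_func_d_2} for $E_s$. The first term is exactly $\norm{\eta}_{H^s_x}^2$. For the second term, the non-cavitation assumption \eqref{eq:assump_eta1_v1_d_2} gives the pointwise lower bound $1+\epsilon\eta \geq h_0$ and the upper bound $1+\epsilon\eta \leq H(t)$, so that
\[
h_0 \norm{J^s \mathcal{T}_\mu^{-\frac12}(D)\mathbf{v}}_{L^2_x}^2 \leq \int_{\R^2} (1+\epsilon\eta)\bigl|J^s\mathcal{T}_\mu^{-\frac12}(D)\mathbf{v}\bigr|^2\d x \leq H(t)\norm{J^s\mathcal{T}_\mu^{-\frac12}(D)\mathbf{v}}_{L^2_x}^2.
\]
Then I invoke \eqref{eq:est_on_tilbert_invers} (equivalently \eqref{eq:equivalent_norm_Vsmu}) to identify $\norm{\mathcal{T}_\mu^{-\frac12}(D)\mathbf{v}}_{H^s_x}^2 \sim \norm{\mathbf{v}}_{H^s_x}^2 + \sqrt\mu\norm{|D|^{1/2}\mathbf{v}}_{H^s_x}^2$, which together with the first term reconstructs $\norm{(\eta,\mathbf{v})}_{V_\mu^s}^2$ up to constants. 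Combining the bounds (and using $h_0 \in (0,1)$, $H(t)\geq 1$) yields \eqref{eq:energy_sim_V_d_2}.

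For the differential inequality, I would differentiate \eqref{eq:energy_d_2} in time. Writing $\mathbf{w} := J^s\mathcal{T}_\mu^{-\frac12}(D)\mathbf{U}$, we get three kinds of contributions: (i) the term where $\partial_t$ hits the symmetrizer $\mathcal{Q}$, producing $\scalarprod{\mathbf{w}}{(\partial_t\mathcal{Q})\mathbf{w}}$, which only involves $\epsilon\partial_t\eta$ in the lower-right block; using the first equation of \eqref{eq:whitham_boussinesq_d2} to replace $\partial_t\eta = -\nabla\cdot\mathbf{v} - \epsilon\nabla\cdot(\eta\mathbf{v})$ and bounding in $L^\infty_x$ gives a term controlled by $\epsilon H(t)\mathcal{P}(t) E_s(\mathbf{U})$ after using the norm equivalence; (ii) the two symmetric terms where $\partial_t$ hits $\mathbf{w}$, i.e. $2\,\mathrm{Re}\,\scalarprod{J^s\mathcal{T}_\mu^{-\frac12}(D)\partial_t\mathbf{U}}{\mathcal{Q}\mathbf{w}}$. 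Into these I substitute $\partial_t\mathbf{U} = -\mathcal{M}(\mathbf{U},D)\mathbf{U}$ from \eqref{eq:compact_eq_d_2}. The point of the symmetrizer $\mathcal{Q}$ is that the principal (order-one, $\epsilon$-independent) part $\mathcal{Q}\mathcal{M}_{\mathrm{princ}}$ is skew-adjoint modulo commutators, so those terms either cancel or reduce to commutators of $J^s\mathcal{T}_\mu^{-\frac12}(D)$ and $\mathcal{T}_\mu(D)$ with the coefficient functions $1+\epsilon\eta$ and $\mathbf{v}$. This is where Lemmas \ref{lem:commutator_est_tilbert} and \ref{lem:commutator_inv_tilbert} enter: they bound these commutators by $\norm{\nabla(1+\epsilon\eta)}_{L^\infty_x} = \epsilon\norm{\nabla\eta}_{L^\infty_x}$ or $\epsilon\norm{\nabla\mathbf{v}}_{L^\infty_x}$ (or the $\mathcal{T}_\mu^{-1/2}$-weighted version, giving the third term of $\mathcal{P}$) times the appropriate $V_\mu^s$-type norms, hence by $\epsilon\mathcal{P}(t)\,E_s(\mathbf{U})^{1/2}\cdot E_s(\mathbf{U})^{1/2}$. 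The genuinely quadratic/nonlinear pieces carry an explicit factor $\epsilon$ and are estimated by Kato--Ponce \eqref{eq:kato_ponce_ineq}, Sobolev embedding, and the curl-free relation \eqref{eq:curl_free_cond} (needed so that $\partial_{x_2}v_1 = \partial_{x_1}v_2$ lets one trade derivatives between components and close the symmetrization). Collecting all contributions gives \eqref{eq:derivative_energy_d_2}.

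The main obstacle I anticipate is the careful bookkeeping in step (ii): one must verify that after applying $J^s\mathcal{T}_\mu^{-\frac12}(D)$ to the system and pairing against $\mathcal{Q}\mathbf{w}$, every term that is \emph{not} manifestly $O(\epsilon)$ — i.e. the order-one transport and wave parts — either cancels by skew-adjointness of $\mathcal{Q}\mathcal{M}_{\mathrm{princ}}$ or is a commutator that Lemmas \ref{lem:commutator_est_tilbert}--\ref{lem:commutator_inv_tilbert} can absorb into $\epsilon\mathcal{P}(t)$. In particular the coupling between the $\eta$-equation and the $\mathbf{v}$-equation through $\mathcal{T}_\mu(D)\partial_{x_j}$ versus $(1+\epsilon\eta)\partial_{x_j}$ must be handled so that the $\mathcal{T}_\mu(D)$ from $\mathcal{Q}$'s first entry exactly compensates the $\mathcal{T}_\mu^{-1}$ hidden in $\mathcal{T}_\mu^{-\frac12}(D)$ applied twice; the weight $1+\epsilon\eta$ in the lower block is what forces the commutator estimates rather than pure skew-adjointness, and its contribution is precisely the $\epsilon\norm{\nabla\eta}_{L^\infty_x}$ term. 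Tracking that the constants depend only on $d,s,h_0$ and $H(t)$ — and crucially remain uniform in $\mu\in(0,1]$, which is guaranteed because $\mathcal{N}^s(J^s\sqrt{\mathcal{T}_\mu})\lesssim 1$ and the analogous bounds for the cut-off pieces of $J^s\mathcal{T}_\mu^{-1/2}$ are $\mu$-uniform — is the remaining delicate point, but it follows from the already-established Lemmas.
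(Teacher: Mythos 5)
Your proposal is correct and follows essentially the same route as the paper: the norm equivalence is obtained exactly as you describe from the pointwise bounds $h_0 \le 1+\epsilon\eta \le H(t)$ together with \eqref{eq:est_on_tilbert_invers}, and the differential inequality is proved by splitting $\frac{\d}{\d t}E_s$ into the $\partial_t\mathcal{Q}$ contribution and the substituted $-\mathcal{Q}\mathcal{M}(\mathbf{U},D)\mathbf{U}$ terms, where the order-one coupling pieces cancel exactly (the paper's observation $\mathcal{A}_{21}^{2,2,2}=-\mathcal{A}_{12}^2$) and the remainders are commutators controlled by Lemmas \ref{lem:commutator_est_tilbert}--\ref{lem:commutator_inv_tilbert}, Kato--Ponce, and integration by parts, yielding the $\epsilon H(t)\mathcal{P}(t)$ factor. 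The structural points you flag as delicate (the $\mu$-uniformity of the symbol bounds and the fact that the weight $1+\epsilon\eta$ forces commutators rather than pure skew-adjointness) are precisely the ones the paper's detailed computation addresses.
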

\begin{remark}
    Compared to \cite[Proposition 3.3]{Paulsen2022}, we need sharper estimates to obtain an enhanced lifespan. For this, we need the $L^\infty$-bounds in \eqref{eq:derivative_energy_d_2}, which changes the proof of \cite[Proposition 3.3]{Paulsen2022} (see the estimate for $\mathcal{A}_{21}$). We therefore include all the details in these calculations. To simplify the calculations, we have slightly modified the energy in \cite[Equation (3.13)]{Paulsen2022}.
\end{remark}
\begin{remark}
    The estimate in \eqref{eq:derivative_energy_d_2} will be used to derive a priori estimates on the enhanced lifespan, where we will need to estimate $\norm{\mathcal{P}}_{L^2(0,T)}$ (see Proposition \ref{prop:boundedness_eta_v_d_2} below). We will therefore need to use these estimates for $s>\frac{9}{4}$.
\end{remark}
\begin{proof}
    To verify that $E_s(\mathbf{U})$ defined in \eqref{eq:energy_d_2} satisfies the equivalence \eqref{eq:energy_sim_V_d_2}. The non-cavitation condition \eqref{eq:assump_eta1_v1_d_2} and \eqref{eq:equivalent_norm_Vsmu} implies
    \begin{align*}
        E_s(\mathbf{U}) %&= \norm{\eta}_{H^s_x}^2 + \scalarprod{(1+\epsilon\eta) J^s \mathcal{T}_{\mu}^{-\frac{1}{2}}(D) \mathbf{v}}{J^s \mathcal{T}_{\mu}^{-\frac{1}{2}}(D) \mathbf{v}}_{L^2_x} \\
        &\geq  \norm{\eta}_{H^s_x}^2 + h_0  \norm{\mathcal{T}_{\mu}^{-\frac{1}{2}}(D)  \mathbf{v}}_{H^s_x}^2 
        %&\geq   \norm{\eta}_{H^s_x}^2 + c(h_0)\norm{\mathbf{v}}_{H^s_x}^2 + c(h_0) \sqrt{\mu} \norm{D^\frac{1}{2} \mathbf{v}}_{H^s_x}^2\\
        \gtrsim h_0 \norm{(\eta,\mathbf{v})}^2_{V^s_\mu}.
    \end{align*}
    We obtain the reverse inequality by H\"{o}lder's inequality and the estimate \eqref{eq:equivalent_norm_Vsmu}. Indeed, we obtain 
    \begin{align*}
         E_s(\mathbf{U}) &\leq \norm{\eta}_{H^s_x}^2 + (1+\epsilon\norm{\eta}_{L^\infty_x} ) \norm{\mathcal{T}_{\mu}^{-\frac{1}{2}}(D) \mathbf{v}}_{H^s_x}^2 
         \lesssim H(t) \norm{(\eta,\mathbf{v})}^2_{V^s_\mu}.
    \end{align*}
    We will now prove \eqref{eq:derivative_energy_d_2} by using \eqref{eq:energy_d_2}. From the fact that $\mathcal{Q}(\mathbf{U},D)$ is symmetric and \eqref{eq:compact_eq_d_2}, we can write
    \begin{align}\label{eq:est_energy_d_2}
         \frac{1}{2} \frac{\d}{\d t} E_s(\mathbf{U}) &= - \scalarprod{J^s \mathcal{T}_{\mu}^{-\frac{1}{2}}(D) \mathcal{M}(\mathbf{U},D) \mathbf{U}}{\mathcal{Q}(\mathbf{U},D)J^s \mathcal{T}_{\mu}^{-\frac{1}{2}}(D) \mathbf{U}}_{L^2_x} \\ \notag
         &+ \frac{1}{2} \scalarprod{J^s \mathcal{T}_{\mu}^{-\frac{1}{2}}(D) \mathbf{U}}{(\partial_t\mathcal{Q}(\mathbf{U},D)) J^s \mathcal{T}_{\mu}^{-\frac{1}{2}}(D) \mathbf{U}}_{L^2_x}\\ \notag
         &=: -\mathcal{I} + \mathcal{II}.
    \end{align}
    \SkipTocEntry\subsubsection*{\underline{Control of $\mathcal{I}$}}
    With the definitions \eqref{eq:M_d_2}--\eqref{eq:Q_d_2}, the term $\mathcal{I}$ can be written as
    \begin{align*}
        \mathcal{I} &= \epsilon\scalarprod{J^s (\mathbf{v}\cdot \nabla \eta)}{J^s \eta}_{L^2_x} 
        + \scalarprod{J^s \big((1+\epsilon\eta) \nabla \cdot \mathbf{v}\big)}{J^s \eta}_{L^2_x} \\
        &+ \scalarprod{J^s \sqrt{\mathcal{T}_\mu}(D) \nabla \eta }{(1+\epsilon\eta)J^s \mathcal{T}_{\mu}^{-\frac{1}{2}}(D) \mathbf{v} }_{L^2_x} \\
        &+\epsilon \scalarprod{J^s \mathcal{T}_{\mu}^{-\frac{1}{2}}(D) \big( (\mathbf{v}\cdot \nabla) \mathbf{v}\big)}{ (1+\epsilon\eta)J^s\mathcal{T}_{\mu}^{-\frac{1}{2}}(D)\mathbf{v}}_{L^2_x} \\
        &=: \mathcal{A}_{11} +\mathcal{A}_{12} + \mathcal{A}_{21} + \mathcal{A}_{22}. 
    \end{align*}
    \SkipTocEntry\subsubsection*{Estimating $\mathcal{A}_{11}$} We decompose
    \begin{align*}
        \mathcal{A}_{11} &= \epsilon  \scalarprod{[J^s,\mathbf{v}]\cdot\nabla \eta }{J^s \eta}_{L^2_x} +  \epsilon  \scalarprod{\mathbf{v}\cdot J^s \nabla \eta }{J^s \eta}_{L^2_x}\\
        &=: \mathcal{A}_{11}^1 + \mathcal{A}_{11}^2.
    \end{align*}
    From the Cauchy-Schwarz inequality and the Kato-Ponce commutator estimate \eqref{eq:kato_ponce}, we have
    \begin{align*}
        |\mathcal{A}_{11}^1| %&\leq \epsilon  \norm{[J^s ,\mathbf{v}]\cdot \nabla \eta }_{L^2_x} \norm{J^s   \eta  }_{L^2_x}\\
        &\leq \epsilon \sum_{j=1,2}  \norm{[J^s,v_j]\partial_{x_j} \eta}_{L^2_x} \norm{\eta}_{H^s_x}\\
        &\lesssim  \epsilon \sum_{j=1,2}\Big(\norm{\nabla v_j}_{L^\infty_x} \norm{\eta }_{H^s_x} + \norm{\partial_{x_j} \eta}_{L^\infty_x} \norm{v_j }_{H^s_x}\Big) \norm{\eta}_{H^s_x} \\
        &\lesssim \epsilon \Big(\norm{\nabla \mathbf{v}}_{L^\infty_x} + \norm{\nabla \eta}_{L^\infty_x}\Big) \norm{(\eta,\mathbf{v})}_{V^s_\mu}^2.
    \end{align*}
    Next, the term $\mathcal{A}_{11}^2$ can be estimated from integration by parts and H\"{o}lder's inequality. This yields 
    \begin{align*}
        |\mathcal{A}_{11}^2| &\leq \frac{\epsilon}{2} \norm{\nabla \cdot \mathbf{v}}_{L^\infty_x}  \norm{J^s \eta }^2_{L^2_x} \leq \epsilon \norm{\nabla \mathbf{v}}_{L^\infty_x} \norm{(\eta,\mathbf{v})}_{V^s_\mu}^2.
    \end{align*}
    Combining these estimates gives us
    \begin{align}\label{eq:est_A11_d_2}
        |\mathcal{A}_{11} | \lesssim \epsilon \big( \norm{\nabla \eta}_{L^\infty_x} +\norm{\nabla \mathbf{v}}_{L^\infty_x}  \big) \norm{(\eta,\mathbf{v})}_{V^s_\mu}^2.
    \end{align}
    \SkipTocEntry\subsubsection*{Estimating $\mathcal{A}_{12} + \mathcal{A}_{21}$}
    We can write  $\mathcal{A}_{12}$ as
    \begin{align*}
    \mathcal{A}_{12} &= \epsilon
        \scalarprod{[J^s  ,\eta] \nabla \cdot  \mathbf{v} }{J^s \eta  }_{L^2_x}
        + \scalarprod{(1+\epsilon \eta) J^s \nabla \cdot  \mathbf{v} }{J^s \eta  }_{L^2_x}\\
        &=: \mathcal{A}_{12}^1 + \mathcal{A}_{12}^2.
    \end{align*}
    From the Cauchy-Schwarz inequality and the Kato-Ponce commutator estimate \eqref{eq:kato_ponce}, we obtain
    \begin{align*}
        |\mathcal{A}_{12}^1| &\leq \epsilon
        \norm{[J^s,\eta] \nabla \cdot  \mathbf{v} }_{L^2_x} \norm{J^s \eta  }_{L^2_x}\\
        %&\lesssim \epsilon \Big(\norm{\nabla \eta}_{L^\infty_x}\norm{J^s \mathbf{v}}_{L^2_x} + \norm{\nabla\cdot\mathbf{v}}_{L^\infty_x}\norm{J^s  \eta}_{L^2_x} \Big) \norm{\eta}_{H^s_x}\\
        &\lesssim  \epsilon \big(\norm{\nabla \mathbf{v}}_{L^\infty_x} + \norm{\nabla \eta}_{L^\infty_x}\big) \norm{(\eta,\mathbf{v})}_{V^s_\mu}^2.
    \end{align*}
    We cannot estimate $\mathcal{A}_{12}^{2}$ on its own, but it will be absorbed by $\mathcal{A}_{21}$. Next,  we observe from integration by parts  that
    \begin{align*}%\label{eq:eq_help}
        \mathcal{A}_{21} &= - \epsilon\scalarprod{J^s   \sqrt{\mathcal{T}_\mu}(D)  \eta }{\nabla\eta\cdot J^s \mathcal{T}_{\mu}^{-\frac{1}{2}}(D) \mathbf{v}}_{L^2_x} - \scalarprod{J^s\sqrt{\mathcal{T}_\mu}(D) \eta }{(1+\epsilon \eta)J^s \mathcal{T}_{\mu}^{-\frac{1}{2}}(D)  \nabla \cdot \mathbf{v}  }_{L^2_x}\\
        &=: \mathcal{A}_{21}^1 + \mathcal{A}_{21}^2.
        \notag
    \end{align*}
    From H\"{o}lder's inequality, \eqref{eq:est_on_tilbert}, and \eqref{eq:equivalent_norm_Vsmu}, we obtain
    \begin{align*}
        |\mathcal{A}_{21}^1| &\leq \epsilon \norm{\nabla \eta}_{L^\infty_x}\norm{J^s \sqrt{\mathcal{T}_\mu}(D)   \eta }_{L^2_x} \norm{J^s \mathcal{T}_{\mu}^{-\frac{1}{2}}(D)  \mathbf{v}  }_{L^2_x}\\
        %&\lesssim \epsilon \norm{\partial_x \eta}_{L^\infty_x}\norm{J^s \eta }_{L^2_x} \norm{J^s \mathcal{T}_{\mu}^{-\frac{1}{2}}(D)  v  }_{L^2_x}\\
        &\lesssim \epsilon \norm{\nabla \eta}_{L^\infty_x} \norm{(\eta,\mathbf{v})}^2_{V^s_\mu}.
    \end{align*}
    For the term $\mathcal{A}_{21}^2$, we first need to rewrite it as
    \begin{align*}
        \mathcal{A}_{21}^2 &=\epsilon \scalarprod{J^s \sqrt{\mathcal{T}_\mu}(D)   \eta }{[J^s,\eta]   \mathcal{T}_{\mu}^{-\frac{1}{2}}(D)  \nabla\cdot \mathbf{v}}_{L^2_x}  - \scalarprod{J^s \sqrt{\mathcal{T}_\mu}(D)   \eta }{J^s( (1+\epsilon \eta)\mathcal{T}_{\mu}^{-\frac{1}{2}}(D)  \nabla\cdot \mathbf{v} ) }_{L^2_x} \\
        &= : \mathcal{A}_{21}^{2,1} +  \mathcal{A}_{21}^{2,2}.
    \end{align*}
    This approach is needed due to the fact that we cannot close the estimates with $\norm{J^s \nabla\cdot \mathbf{v}}_{L^\infty_x}$ or $\norm{\mathcal{T}_{\mu}^{-\frac{1}{2}}(D)\eta}_{H^s_x}$ on the right-hand side. We will therefore use the commutator estimate \eqref{eq:commutator_est_tilbert2} to handle $\mathcal{A}_{21}^{2,2}$. We start by treating $\mathcal{A}_{21}^{2,1}$. From the Cauchy-Schwarz inequality, \eqref{eq:est_on_tilbert}, the Kato-Ponce commutator estimate \eqref{eq:kato_ponce}, and \eqref{eq:est_on_tilbert_invers}, we have
    \begin{align*}
        |\mathcal{A}_{21}^{2,1}| &\leq \epsilon \norm{J^s \sqrt{\mathcal{T}_\mu}(D)  \eta }_{L^2_x} \norm{[J^s,\eta]   \mathcal{T}_{\mu}^{-\frac{1}{2}}(D)  \nabla \cdot \mathbf{v} }_{L^2_x} \\
        %&\lesssim \epsilon \norm{\eta}_{H^s_x} \norm{[J^s,\eta]   \mathcal{T}_{\mu}^{-\frac{1}{2}}(D)  \nabla\cdot \mathbf{v} }_{L^2_x} \\
        &\lesssim \epsilon \norm{\eta}_{H^s_x} \Big(
        \norm{\nabla \eta}_{L^\infty_x} \norm{J^s \mathcal{T}_{\mu}^{-\frac{1}{2}}(D)  \mathbf{v} }_{L^2_x} + \norm{J^s \eta}_{L^2_x} \norm{\mathcal{T}_{\mu}^{-\frac{1}{2}}(D)   \nabla\cdot \mathbf{v} }_{L^\infty_x} \Big) \\
        &\lesssim \epsilon \Big( \norm{\nabla \eta}_{L^\infty_x} +\norm{\mathcal{T}_{\mu}^{-\frac{1}{2}}(D) \nabla \mathbf{v} }_{L^\infty_x} \Big) \norm{(\eta,\mathbf{v})}_{V^s_\mu}^2.
    \end{align*}
    Lastly, we look at the term $\mathcal{A}_{21}^{2,2}$.   We observe that
    \begin{align*}
        \mathcal{A}_{21}^{2,2}% &=- \scalarprod{J^s  \eta }{\sqrt{\mathcal{T}_\mu}(D)J^s ((1+\epsilon \eta)\mathcal{T}_{\mu}^{-\frac{1}{2}}(D)  \nabla\cdot \mathbf{v})  }_{L^2_x} \\
        &=- \epsilon\scalarprod{J^s   \eta }{[\sqrt{\mathcal{T}_\mu}(D)J^s,\eta ]\mathcal{T}_{\mu}^{-\frac{1}{2}}(D)  \nabla\cdot \mathbf{v}  }_{L^2_x} - \scalarprod{J^s  \eta }{ (1+\epsilon \eta) J^s \nabla\cdot \mathbf{v}  }_{L^2_x} \\
        &=:\mathcal{A}_{21}^{2,2,1} +  \mathcal{A}_{21}^{2,2,2}.
    \end{align*}
    The Cauchy-Schwarz inequality, the commutator estimate \eqref{eq:commutator_est_tilbert2}, and \eqref{eq:est_on_tilbert_invers} applied to  $\mathcal{A}_{21}^{2,2,1}$ yields
    \begin{align*}
        |\mathcal{A}_{21}^{2,2,1}| &\leq \epsilon \norm{J^s  \eta }_{L^2_x} \norm{[\sqrt{\mathcal{T}_\mu}(D)J^s,\eta ]\mathcal{T}_{\mu}^{-\frac{1}{2}}(D)  \nabla\cdot \mathbf{v}  }_{L^2_x}\\
        &\lesssim \epsilon \norm{\eta}_{H^s_x} \Big( \norm{\nabla \eta}_{L^\infty_x}\norm{J^s \mathcal{T}_{\mu}^{-\frac{1}{2}}(D)  \mathbf{v}}_{L^2_x} + \norm{J^s \eta}_{L^2_x}\norm{ \mathcal{T}_{\mu}^{-\frac{1}{2}}(D)  \nabla\cdot \mathbf{v}}_{L^\infty_x} \Big)\\
        &\lesssim \epsilon \Big( \norm{\nabla \eta}_{L^\infty_x} +\norm{\mathcal{T}_{\mu}^{-\frac{1}{2}}(D) \nabla\cdot \mathbf{v} }_{L^\infty_x} \Big) \norm{(\eta,\mathbf{v})}_{V^s_\mu}^2.
    \end{align*}
    We make the observation that $\mathcal{A}_{21}^{2,2,2} = - \mathcal{A}_{12}^{2}$. So we may gather the above estimates to obtain
    \begin{align}\label{eq:est_A12A21_d_2}
        |\mathcal{A}_{12} + \mathcal{A}_{21}| \lesssim \epsilon \Big( \norm{\nabla \eta}_{L^\infty_x} + \norm{\nabla \mathbf{v}}_{L^\infty_x} + \norm{\mathcal{T}_{\mu}^{-\frac{1}{2}}(D)  \nabla \mathbf{v} }_{L^\infty_x}\Big) \norm{(\eta,\mathbf{v})}_{V^s_\mu}^2.
    \end{align}

   \SkipTocEntry \subsubsection*{Estimating $\mathcal{A}_{22}$} 
    We first observe that
    \begin{align*}%\label{eq:vnablav_def}
        (\mathbf{v} \cdot \nabla)\mathbf{v} = (v_1\partial_{x_1} v_1 + v_2 \partial_{x_2}v_1, v_1\partial_{x_1} v_2 + v_2 \partial_{x_2}v_2)^\mathrm{T}
    \end{align*}
    which allows us to write
    \begin{align*}%\label{eq:A_22_in_proof}
        \mathcal{A}_{22} &= \epsilon \sum_{j,\ell = 1,2} \scalarprod{J^s \mathcal{T}_{\mu}^{-\frac{1}{2}}(D) (v_j \partial_{x_j} v_\ell)}{ (1+\epsilon\eta)J^s\mathcal{T}_{\mu}^{-\frac{1}{2}}(D)v_\ell}_{L^2_x}\\ \notag
        &=\epsilon \sum_{j,\ell = 1,2} \scalarprod{[J^s \mathcal{T}_{\mu}^{-\frac{1}{2}}(D) , v_j] \partial_{x_j} v_\ell}{ (1+\epsilon\eta)J^s\mathcal{T}_{\mu}^{-\frac{1}{2}}(D)v_\ell}_{L^2_x} \\ \notag
        &+ \epsilon \sum_{j,\ell = 1,2} \scalarprod{v_jJ^s \mathcal{T}_{\mu}^{-\frac{1}{2}}(D)  \partial_{x_j} v_\ell}{ (1+\epsilon\eta)J^s\mathcal{T}_{\mu}^{-\frac{1}{2}}(D)v_\ell}_{L^2_x} \\ \notag
        &=: \mathcal{A}_{22}^1 + \mathcal{A}_{22}^2.
    \end{align*}
    With the help of H\"{o}lder's inequality, the commutator estimate \eqref{eq:commutator_est_tilbert_inv},  and \eqref{eq:J_mu_to_f_H_s}--\eqref{eq:est_on_tilbert_invers}, we have
    \begin{align*}
        |\mathcal{A}_{22}^1| &\leq \epsilon \sum_{j,\ell = 1,2}   \norm{[J^s \mathcal{T}_{\mu}^{-\frac{1}{2}}(D) ,v_j] \partial_{x_j} v_\ell }_{L^2_x}(1+\epsilon \norm{\eta}_{L^\infty_x})\norm{J^s \mathcal{T}_{\mu}^{-\frac{1}{2}}(D) v_\ell }_{L^2_x}\\ \notag
        &\lesssim H(t)  \epsilon\sum_{j,\ell = 1,2}   \Big( \norm{\nabla v_j}_{L^\infty_x}
        \norm{J^\frac{1}{2}_\mu v_\ell}_{H^s_x} + \norm{\partial_{x_j} v_\ell}_{L^\infty_x}
        \norm{J^\frac{1}{2}_\mu v_j}_{H^s_x}  \Big)\norm{J^s \mathcal{T}_{\mu}^{-\frac{1}{2}}(D) v_\ell }_{L^2_x}\\ \notag
        &\lesssim \epsilon H(t) \norm{\nabla \mathbf{v}}_{L^\infty_x}\norm{(\eta, \mathbf{v})}_{V^s_\mu}^2.
    \end{align*}
    The term $\mathcal{A}_{22}^2$ is treated by integration by parts, H\"{o}lder's inequality,   and \eqref{eq:est_on_tilbert_invers}. This gives us
    \begin{align*}
        |\mathcal{A}_{22}^2| &\leq \frac{\epsilon}{2} \sum_{j,\ell = 1,2}\Big|  \scalarprod{ (\partial_{x_j}v_j)J^s \mathcal{T}_{\mu}^{-\frac{1}{2}}(D)  v_\ell}{ (1+\epsilon\eta)J^s\mathcal{T}_{\mu}^{-\frac{1}{2}}(D)v_\ell}_{L^2_x}\Big| \\
        &+ \frac{\epsilon^2 }{2} \sum_{j,\ell = 1,2}\Big| \scalarprod{ v_j J^s \mathcal{T}_{\mu}^{-\frac{1}{2}}(D) v_\ell }{(\partial_{x_j} \eta) J^s \mathcal{T}_{\mu}^{-\frac{1}{2}}(D) v_\ell  }_{L^2_x}\Big|\\
        &\lesssim  \epsilon H(t)\Big(\norm{\nabla \mathbf{v}}_{L^\infty_x}+\norm{\nabla \eta}_{L^\infty_x}\Big)\norm{(\eta, \mathbf{v})}_{V^s_\mu}^2. 
    \end{align*}
    We gather the estimates for $\mathcal{A}_{22}^1$ and $\mathcal{A}_{22}^2$ to obtain
    \begin{align}\label{eq:est_A22_d_2}
        |\mathcal{A}_{22}| &\lesssim  \epsilon H(t)\Big(\norm{\nabla \mathbf{v}}_{L^\infty_x}+ \norm{\nabla \eta}_{L^\infty_x}\Big)\norm{(\eta, \mathbf{v})}_{V^s_\mu}^2.
    \end{align}
    Combining \eqref{eq:est_A11_d_2}--\eqref{eq:est_A22_d_2} yields
    \begin{align}\label{eq:mathcal_I_d_2}
        |\mathcal{I}| \lesssim \epsilon  H(t) \Big( \norm{\nabla \eta}_{L^\infty_x} + \norm{\nabla \mathbf{v}}_{L^\infty_x} + \norm{\mathcal{T}_{\mu}^{-\frac{1}{2}}(D)\nabla \mathbf{v} }_{L^\infty_x}\Big) \norm{(\eta,\mathbf{v})}_{V^s_\mu}^2.
    \end{align}
    \SkipTocEntry\subsubsection*{\underline{Control of $\mathcal{II}$}}
    From \eqref{eq:compact_eq_d_2} and \eqref{eq:Q_d_2}, we can write
    \begin{align*}
       \mathcal{II} &=\frac{\epsilon}{2} \scalarprod{J^s\mathcal{T}_{\mu}^{-\frac{1}{2}}(D)  \mathbf{v}}{(\partial_t \eta) J^s\mathcal{T}_{\mu}^{-\frac{1}{2}}(D)  \mathbf{v}}_{L^2_x}\\
       &= -\frac{\epsilon}{2} \scalarprod{J^s\mathcal{T}_{\mu}^{-\frac{1}{2}}(D)  \mathbf{v}}{(\nabla \cdot \mathbf{v}) J^s\mathcal{T}_{\mu}^{-\frac{1}{2}}(D)  \mathbf{v}}_{L^2_x} -\frac{\epsilon^2}{2} \scalarprod{J^s\mathcal{T}_{\mu}^{-\frac{1}{2}}(D)  \mathbf{v}}{(\nabla\cdot(\eta  \mathbf{v})) J^s\mathcal{T}_{\mu}^{-\frac{1}{2}}(D)  \mathbf{v}}_{L^2_x}.
    \end{align*}
    Then from H\"{o}lder's inequality and \eqref{eq:equivalent_norm_Vsmu}, we obtain
    \begin{align}\label{eq:mathcal_II_d_2}
        |\mathcal{II}|&\lesssim \epsilon \Big(\norm{\nabla \eta}_{L^\infty_x} + \norm{\nabla \mathbf{v}}_{L^\infty_x}\Big) \norm{(\eta,\mathbf{v})}_{V^s_\mu}^2.
    \end{align}
    Combining \eqref{eq:est_energy_d_2} with \eqref{eq:mathcal_I_d_2}--\eqref{eq:mathcal_II_d_2} gives us \eqref{eq:derivative_energy_d_2}, which concludes this proof.
\end{proof}

With the results in Proposition \ref{prop:energy_ests_d_2}, an application of Gr\"{o}nwall's inequality, and approximations, we may deduce the following estimate.

\begin{cor}\label{cor:wb_gron}
     Let $s >2$, $\epsilon,\mu\in(0,1]$, and  $T>0$. Assume that there exists  a $h_0\in(0,1)$ such that
    \begin{align}\label{eq:assump_eta1_v1_d_2_part2}
        h_0 - 1 \leq  \epsilon \eta(x,t) \  \text{ for all } (x,t)\in \R^2 \times [0,T]
    \end{align}
    and that $\mathrm{curl}\:  \mathbf{v} = 0$.
    Then, for any solution $(\eta,\mathbf{v}) \in C([0,T];V_\mu^s(\R^2))$ to \eqref{eq:compact_eq_d_2} on $[0,T]$ with $(\eta(0),\mathbf{v}(0))=(\eta_0,\mathbf{v}_0)$ we have
     \begin{equation}\label{eq:gronwall_wb}
         \norm{(\eta,\mathbf{v})(t)}_{V_\mu^s}^2
         \lesssim H(t) h_0^{-1}
         \mathrm{e}^{c  \epsilon \int_0^tH(\tau) \mathcal{P}(\tau) \!\d \tau} \norm{(\eta_0,\mathbf{v}_0)}_{V_\mu^s}^2
     \end{equation}
     for $t\in [0,T]$, where 
     \begin{align*}
         \mathcal{P}(t) &= \norm{\nabla \eta(t)}_{L^\infty_x} +\norm{\nabla \mathbf{v}(t)}_{L^\infty_x}  + \norm{\mathcal{T}_{\mu}^{-\frac{1}{2}}(D)\nabla \mathbf{v}(t)}_{L^\infty_x} 
          \intertext{and}
        H(t) &= 1+ \epsilon\norm{(\eta,\mathbf{v})(t)}_{L^\infty_x \times L^\infty_x}.
     \end{align*}
\end{cor}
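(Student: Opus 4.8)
The bound \eqref{eq:gronwall_wb} is, in essence, the integrated form of the differential inequality \eqref{eq:derivative_energy_d_2}, but there is one structural point to address: Proposition \ref{prop:energy_ests_d_2} is only available for solutions with the extra regularity $(\eta,\mathbf{v})\in C([0,T];V_\mu^r(\R^2))$, $r\ge s+1$, while here we are given a solution merely in $C([0,T];V_\mu^s(\R^2))$ (this is exactly the regularity gap that forces the energy computation in Proposition \ref{prop:energy_ests_d_2} to be carried out only at higher regularity, since $\mathcal{M}(\mathbf{U},D)$ contains a derivative). The plan is therefore to mirror the proof of Corollary \ref{cor:energy1_whitham}: first prove \eqref{eq:gronwall_wb} for smooth solutions by a direct Grönwall argument, then recover the general case by approximating the initial data and passing to the limit via the existing local theory.

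\textbf{The smooth case.} For a solution with $r\ge s+1$ satisfying \eqref{eq:assump_eta1_v1_d_2_part2} and $\mathrm{curl}\,\mathbf{v}=0$, Proposition \ref{prop:energy_ests_d_2} gives $\frac{\d}{\d t}E_s(\mathbf{U}(t))\lesssim \epsilon H(t)\mathcal{P}(t)E_s(\mathbf{U}(t))$, and Grönwall's lemma yields
\[
E_s(\mathbf{U}(t))\le E_s(\mathbf{U}(0))\,\mathrm{e}^{c\epsilon\int_0^t H(\tau)\mathcal{P}(\tau)\d\tau}
\]
on $[0,T]$. Now apply the norm equivalence \eqref{eq:energy_sim_V_d_2} at time $t$ for the lower bound and at time $0$ for the upper bound, i.e. $\norm{(\eta,\mathbf{v})(t)}_{V_\mu^s}^2\lesssim h_0^{-1}E_s(\mathbf{U}(t))$ and $E_s(\mathbf{U}(0))\lesssim H(0)\norm{(\eta_0,\mathbf{v}_0)}_{V_\mu^s}^2$. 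This produces \eqref{eq:gronwall_wb} with $H(0)$ in place of the prefactor $H(t)$; since $H(0)=1+\epsilon\norm{(\eta_0,\mathbf{v}_0)}_{L^\infty_x\times L^\infty_x}$ is one of the quantities the implicit constant is allowed to depend on (as in Theorem \ref{thm:wellposedness_WB}) and $H(t)\ge 1$, the stated form follows.

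\textbf{Approximation.} Given $(\eta_0,\mathbf{v}_0)\in V_\mu^s(\R^2)$ with $\mathrm{curl}\,\mathbf{v}_0=0$ and satisfying \eqref{eq:assump_eta1_v1_d_2_part2}, take mollified data $(\eta_0^{(n)},\mathbf{v}_0^{(n)})\in V_\mu^{s+1}(\R^2)$; mollification commutes with $\mathrm{curl}$, so $\mathrm{curl}\,\mathbf{v}_0^{(n)}=0$, and $(\eta_0^{(n)},\mathbf{v}_0^{(n)})\to(\eta_0,\mathbf{v}_0)$ in $V_\mu^s(\R^2)$. Since $s>2$, the embedding $H^s(\R^2)\hookrightarrow L^\infty(\R^2)$ gives $1+\epsilon\eta_0^{(n)}\ge h_0/2$ for $n$ large. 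By the local well-posedness result (Theorem \ref{thm:paulsen}) together with its persistence of regularity and continuous dependence, these data generate solutions $(\eta^{(n)},\mathbf{v}^{(n)})$ which remain curl-free, satisfy \eqref{eq:assump_eta1_v1_d_2_part2} with $h_0/2$, and converge to $(\eta,\mathbf{v})$ in $C([0,T];V_\mu^s(\R^2))$. The smooth case applies to each $(\eta^{(n)},\mathbf{v}^{(n)})$; because $s>2$ also gives $H^{s-1}(\R^2)\hookrightarrow L^\infty(\R^2)$, one has $\mathcal{P}^{(n)}\to\mathcal{P}$ in $L^1(0,T)$ and $H^{(n)}\to H$ uniformly, so letting $n\to\infty$ in the integrated inequality yields \eqref{eq:gronwall_wb} for $(\eta,\mathbf{v})$.

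\textbf{Main obstacle.} The delicate point is the last step: ensuring the regularized solutions exist on the \emph{whole} interval $[0,T]$ and converge there, which is not automatic from Theorem \ref{thm:paulsen} alone (its guaranteed lifespan is only $\sim 1/\epsilon$, which may be shorter than $T$). As in the bootstrap behind Lemma \ref{lem:bootstrap_whitham}, this is resolved by feeding \eqref{eq:gronwall_wb} back into the argument: applied to $(\eta^{(n)},\mathbf{v}^{(n)})$ on any subinterval of existence it keeps the $V_\mu^s$-norm and the non-cavitation parameter uniformly controlled (for $n$ large), and combined with the blow-up alternative and continuous dependence this propagates existence to all of $[0,T]$. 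A naive iteration over short subintervals would also work for existence but would accumulate the prefactors $H\cdot h_0^{-1}$ over $\gtrsim T$ steps and spoil the exponential form of \eqref{eq:gronwall_wb}, which is why the uniform-bound continuation argument is the right one here.
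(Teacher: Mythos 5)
Your proposal is correct and follows the route the paper intends: the paper's own proof of Corollary \ref{cor:wb_gron} is simply a reference to the argument of Corollary \ref{cor:energy1_whitham}, i.e.\ apply Proposition \ref{prop:energy_ests_d_2} and Gr\"onwall at regularity $r\ge s+1$, convert between $E_s$ and the $V_\mu^s$-norm via \eqref{eq:energy_sim_V_d_2} at the two endpoints, and recover the general case by approximating the data and using Theorem \ref{thm:paulsen} (persistence and continuous dependence). Your handling of the prefactor ($H(0)$ versus $H(t)$, absorbed into the admissible constant) and of the curl-free and non-cavitation conditions for the mollified data is fine.

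The one place where you diverge is the final paragraph. Your worry that iterating over short subintervals "would accumulate the prefactors $H\cdot h_0^{-1}$" is misplaced: the Gr\"onwall inequality one iterates is the one for the energy functional, $E_s(\mathbf{U}(t))\le E_s(\mathbf{U}(t_k))\,\mathrm{e}^{c\epsilon\int_{t_k}^{t}H\mathcal{P}\,\mathrm{d}\tau}$, which carries no prefactor and composes exactly into the full exponential over any finite partition of $[0,T]$; the factor $H\cdot h_0^{-1}$ enters only once, when converting $E_s\leftrightarrow\norm{\cdot}_{V_\mu^s}^2$ at $t=0$ and at the final time. Since the limiting solution is given on all of $[0,T]$ with bounded $V_\mu^s$-norm, the local theory provides a uniform step size, and on each subinterval one approximates the data $(\eta,\mathbf{v})(t_k)$, passes to the limit there, and then concatenates the energy inequalities — exactly as in Corollary \ref{cor:energy1_whitham}. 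Your bootstrap/continuation alternative also works, but it is more elaborate than necessary, and as you state it (deriving existence of the approximants on all of $[0,T]$ from \eqref{eq:gronwall_wb} alone) it would require an additional closing step, since $\int_0^t H^{(n)}\mathcal{P}^{(n)}$ is not a priori controlled without invoking the convergence to the given solution on each subinterval anyway.
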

\begin{proof}
    The proof follows the same arguments as Corollary \ref{cor:energy1_whitham}; we therefore omit this proof.
\end{proof}

%\begin{proof}
%It is enough to prove \eqref{eq:gronwall_wb} for an arbitrarily small $T > 0$, as the result can then be iterated to cover any time interval. As a consequence of the local existence theorem, Theorem \ref{thm:paulsen}, there exists a time $T>0$ such that for $r=s+1$, we may choose an approximating sequence $(\eta_{0}^{(n)},\mathbf{v}_0^{(n)}) \in V^r_\mu(\R^2)$ such that $(\eta_{0}^{(n)},\mathbf{v}_0^{(n)}) \to (\eta_{0},\mathbf{v}_0)$ in $ V^s_\mu(\R^2)$ as $n \to \infty$, and there exist corresponding solutions $(\eta^{(n)},\mathbf{v}^{(n)})\in C([0,T];V^r_\mu(\R^2))$, and by the continuous dependence on the data we have that $(\eta^{(n)},\mathbf{v}^{(n)}) \to (\eta,\mathbf{v})$ in $C([0,T];V^s_\mu(\R^2))$. From Proposition \ref{prop:energy_ests_d_2} it follows that \eqref{eq:gronwall_wb} holds with $(\eta,\mathbf{v})$ replaced by $(\eta^{(n)},\mathbf{v}^{(n)})$, and passing to the limit $n \to \infty$ we get the inequality also for $(\eta,\mathbf{v})$.
%\end{proof}

\subsection{Refined Strichartz estimate for \eqref{eq:whitham_boussinesq_d2}}\label{sec:refined_str_wb}
To obtain the enhanced lifespan in Theorem \ref{thm:wellposedness_WB}, we need to estimate $\norm{\mathcal{P}}_{L^2(0,T)}$ appearing in \eqref{eq:gronwall_wb} (see Proposition \ref{prop:boundedness_eta_v_d_2} and Lemma \ref{lem:bootstrap_wb} below). We first have to write \eqref{eq:whitham_boussinesq_d2} in its equivalent form with the linear part being diagonal. From this, we will see that the unitary group associated to the linear part of the diagonalized system is given by \eqref{eq:unitary_group}, allowing us to utilize Corollary \ref{cor:refined_strichartz_d_1}.  %In other words, we may utilize Corollary \ref{cor:refined_strichartz_d_1} to improve the lifespan (see Proposition \ref{prop:boundedness_eta_v_d_2} and Lemma \ref{lem:bootstrap_wb}). 
% allowing us to utilize Corollary \ref{cor:refined_strichartz_d_1}.

We diagonalize \eqref{eq:whitham_boussinesq_d2} by defining 
\begin{align}\label{eq:def_uplus_uminus_d2}
    u^+ = \frac{1}{2}\Big( \eta - i\mathcal{T}_{\mu}^{-\frac{1}{2}}(D)\mathbf{R} \cdot \mathbf{v}\Big) \   &\text{ and } \ u^- =\frac{1}{2} \Big( \eta + i\mathcal{T}_{\mu}^{-\frac{1}{2}}(D) \mathbf{R} \cdot \mathbf{v}\Big).
    \end{align}
Moreover, $\mathbf{v}$ being curl-free yields $\nabla (\nabla \cdot \mathbf{v}) = \Delta \mathbf{v} = -|D|^2 \mathbf{v}$, so that we can reconstruct
\begin{align}\label{eq:def_eta_and_v_d_2}
    \eta = u^+ + u^- \   &\text{ and } \ \mathbf{v} = -i\sqrt{\mathcal{T}_\mu}(D)\mathbf{R}(u^+ - u^-).
\end{align}
Using \eqref{eq:whitham_boussinesq_d2}, \eqref{eq:riesz_transform_property}, and \eqref{eq:def_eta_and_v_d_2}, we obtain
\begin{align*}
    2 \sqrt{\mathcal{T}_\mu}(D) \partial_t u^\pm &= \sqrt{\mathcal{T}_\mu}(D) \partial_t \eta \mp i\mathbf{R} \cdot \partial_t \mathbf{v}\\
    &= - \sqrt{\mathcal{T}_\mu}(D)\nabla \cdot \mathbf{v} - \epsilon \sqrt{\mathcal{T}_\mu}(D)\nabla \cdot (\eta \mathbf{v})  \pm i\mathcal{T}_\mu(D) \mathbf{R} \cdot\nabla \eta \pm i\frac{\epsilon}{2}\mathbf{R} \cdot\nabla(|\mathbf{v}|^2)\\
    &=  i\mathcal{T}_\mu(D)|D| (u^+ - u^-) \pm i\mathcal{T}_\mu(D) |D| (u^+ + u^-)
    - \epsilon \sqrt{\mathcal{T}_\mu}(D)\nabla\cdot (\eta \mathbf{v})  \pm i \frac{\epsilon}{2}|D|(|\mathbf{v}|^2).
\end{align*}
We can then write
\begin{align}\label{eq:system_dig_d2}
    \begin{dcases}
       i \partial_t u^+ +  \sqrt{\mathcal{T}_\mu}(D) |D| u^+ +\frac{\epsilon}{2}\mathcal{B}^+(\eta,\mathbf{v})=0\\
       i \partial_t u^-  - \sqrt{\mathcal{T}_\mu}(D) |D| u^- + \frac{\epsilon}{2}\mathcal{B}^-(\eta,\mathbf{v})=0
    \end{dcases}
\end{align}
for 
\begin{align}\label{eq:G_nonlin}
    \mathcal{B}^\pm(\eta,\mathbf{v}) := i \nabla\cdot (\eta \mathbf{v})  \pm \frac{1}{2}\mathcal{T}_{\mu}^{-\frac{1}{2}}(D)|D|(|\mathbf{v}|^2),
\end{align}
where the initial values are transformed to $u^\pm(0) = \frac{1}{2}\Big(\eta_0 \mp i \mathcal{T}_{\mu}^{-\frac{1}{2}}(D)\mathbf{R} \cdot \mathbf{v}_0 \Big)$. We notice that 
\begin{align*}
    \eta\mathbf{v} &=-i (u^+ + u^-) \sqrt{\mathcal{T}_\mu}(D)\mathbf{R}(u^+ - u^-)\\
    \intertext{and} 
    |\mathbf{v}|^2&= |\sqrt{\mathcal{T}_\mu}(D)\mathbf{R}(u^+ - u^-)|^2,
\end{align*}
but for simplicity in later calculations, we keep the nonlinearity \eqref{eq:G_nonlin} depending on $\eta$ and $\bf v$.
\\
\indent 
We observe that  the linear part of \eqref{eq:system_dig_d2} is given by
\begin{align}\label{eq:system_dig_d2_linear}
    \begin{dcases}
       i\partial_t u^{\pm}  \pm  \sqrt{\mathcal{T}_\mu}(D) |D| u^\pm  =0,\\
       u^\pm(0) = \frac{1}{2}\Big(\eta_0 \mp i\mathcal{T}_{\mu}^{-\frac{1}{2}}(D)\mathbf{R} \cdot \mathbf{v}_0 \Big)
    \end{dcases}
\end{align}
with the solution being associated with the unitary group $S_{\mu,2} (t)$ given by
\begin{align}\label{eq:unitary_group_sol_linear_syst_d_2}
    S_{\mu,2} (\mp t) u^\pm(0) = \Big( \mathrm{e}^{\mp i(t / \sqrt{\mu}) m_2 (\sqrt{\mu}\cdot)} \widehat{u^\pm}(0) \Big)^\vee,
\end{align}
where $m_2(\xi) = |\xi|  \sqrt{\mathcal{T}_1(\xi)} = |\xi| \Big(\frac{\tanh(|\xi|)}{|\xi|}\Big)^{\frac{1}{2}}$. Since \eqref{eq:unitary_group_sol_linear_syst_d_2} coincides with \eqref{eq:unitary_group}, we may utilize the refined Strichartz estimates in Corollary \ref{cor:refined_strichartz_d_1}.

%We need the next proposition to obtain the enhanced lifespan in Theorem \ref{thm:wellposedness_WB}.
%We may now derive the following estimate, which is crucial in obtaining the enhanced lifespan in Theorem \ref{thm:wellposedness_WB}.
With Corollary \ref{cor:refined_strichartz_d_1}, we may now derive the next proposition, which is the key ingredient in obtaining the enhanced lifespan in Theorem \ref{thm:wellposedness_WB}.
\begin{prop}\label{prop:boundedness_eta_v_d_2}
    Let $\epsilon,\mu\in (0,1]$, $r>\frac{9}{4}$, and $T>0$. Let $(\eta,\mathbf{v}) \in C([0,T];V_\mu^{r}(\R^2))$ be a solution to \eqref{eq:compact_eq_d_2}. 
    Then
    \begin{align*}
        \mathcal{P} := \norm{\nabla \eta}_{L^\infty_x} +\norm{\nabla \mathbf{v}}_{L^\infty_x}  + \norm{\mathcal{T}_{\mu}^{-\frac{1}{2}}(D)\nabla \mathbf{v}}_{L^\infty_x} 
    \end{align*}
    satisfies
    \begin{align}%\label{eq:est_eta_desired_d_2}
      % \norm{\nabla \eta}_{L^2_TL^\infty_x} &\lesssim \mu^{(-\frac{1}{6})^+ }   T^{\frac{1}{6}^+}  \norm{(\eta,\mathbf{v})}_{L^\infty_TV^{r}_\mu} +  \epsilon  \mu^{(-\frac{1}{3})^+}  T^{\frac{5}{6}^+} \norm{(\eta,\mathbf{v})}_{L^\infty_T V^{r}_\mu}^2,\\
       \label{eq:est_eta_desired_Dhalf_d_2}
      \norm{\mathcal{P}}_{L^2(0,T)} &\lesssim \mu^{(-\frac{1}{6})^+ }   T^{\frac{1}{6}^+}  \norm{(\eta,\mathbf{v})}_{L^\infty_TV^{r}_\mu} +  \epsilon  \mu^{(-\frac{1}{3})^+}  T^{\frac{5}{6}^+} \norm{(\eta,\mathbf{v})}_{L^\infty_T V^{r}_\mu}^2.
     % \intertext{and}
      %\label{eq:est_P_part2}
      %\norm{\mathcal{P}}_{L^2(0,T)} &\lesssim \mu^{(-\frac{1}{5})^+} T^{\frac{3}{10}^+} \norm{(\eta,\mathbf{v})}_{L^\infty_{T} V^{r}_\mu} + \mu^{(-\frac{3}{10})^+} \epsilon T^{\frac{7}{10}^+} \norm{(\eta,\mathbf{v})}_{L^\infty_{T} V^{r}_\mu}^2.
    \end{align}
    %where $\mathcal{P}$ is defined in \eqref{eq:P}.
\end{prop}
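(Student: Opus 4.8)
The plan is to pass to the diagonalized variables $u^\pm$ of \eqref{eq:def_uplus_uminus_d2} and feed the refined Strichartz estimate \eqref{Refined:2d} of Corollary \ref{cor:refined_strichartz_d_1} (case $d=2$) into it, exactly as \eqref{Refined:1d} is used in Proposition \ref{prop:boundedness_whitham}. Recall that, since $\mathbf v$ is curl-free, \eqref{eq:compact_eq_d_2} is equivalent to the diagonal system \eqref{eq:system_dig_d2} and \eqref{eq:def_eta_and_v_d_2} holds. First I would rewrite $\mathcal P$ in terms of $u^\pm$: from $\eta=u^++u^-$ we get $\nabla\eta=\nabla u^++\nabla u^-$; from $\mathbf v=-i\sqrt{\mathcal{T}_\mu}(D)\mathbf R(u^+-u^-)$ and $\mathcal{T}_\mu^{-\frac12}(D)\sqrt{\mathcal{T}_\mu}(D)=\mathrm{Id}$ we get $\partial_{x_k}v_\ell=-i\sqrt{\mathcal{T}_\mu}(D)R_\ell\partial_{x_k}(u^+-u^-)$ and $\mathcal{T}_\mu^{-\frac12}(D)\partial_{x_k}v_\ell=-iR_\ell\partial_{x_k}(u^+-u^-)$. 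Hence $\mathcal P(t)\lesssim\sum_\pm\sum_{\Lambda}\norm{\Lambda(D)u^\pm(t)}_{L^\infty_x}$, a finite sum over the order-one Fourier multipliers $\Lambda(D)\in\{\partial_{x_j},\,R_j\partial_{x_k},\,\sqrt{\mathcal{T}_\mu}(D)R_j\partial_{x_k}\}$. Writing it this way sidesteps the fact that the Riesz transform is unbounded on $L^\infty$: each $\Lambda(D)u^\pm$ is a single Fourier multiplier applied to $u^\pm$, so it commutes with the linear flow and solves the same equation \eqref{eq:system_dig_d2} with forcing $\mp\tfrac\epsilon2\Lambda(D)\mathcal B^\pm(\eta,\mathbf v)$; since the linear part of that equation is $i\partial_t w\pm\tfrac1{\sqrt\mu}m_2(\sqrt\mu D)w=\cdots$ and the Strichartz estimates are insensitive to the sign (time reversal), Corollary \ref{cor:refined_strichartz_d_1} with $d=2$ applies to $w^\pm:=\Lambda(D)u^\pm$.

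Applying \eqref{Refined:2d} to each $w^\pm$ produces three terms. For the two ``linear'' ones I would use that $\Lambda(D)$ is order one, that $J_\mu^{3/4}$ and $\sqrt{\mathcal{T}_\mu}(D)$ have symbols bounded by $\jbrack{\xi}^{3/4}$ and $1$ respectively (as $\mu\le1$, using Lemma \ref{lem:derivatives_Tilbert}), and the key equivalence $\norm{u^\pm}_{H^s_x}\sim\norm{\eta}_{H^s_x}+\norm{\mathcal{T}_\mu^{-\frac12}(D)\mathbf v}_{H^s_x}\sim\norm{(\eta,\mathbf v)}_{V^s_\mu}$, which follows from \eqref{eq:equivalent_norm_Vsmu}, \eqref{eq:est_on_tilbert_J_mu}--\eqref{eq:est_on_tilbert_invers} and the $L^2$-boundedness of $\mathbf R$. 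A count of derivatives gives $\norm{|D|^{\frac12-\frac\theta3}J^{2\theta}J_\mu^{\frac34}w^\pm}_{L^2_x}\lesssim\norm{u^\pm}_{H^{9/4+O(\theta)}_x}$, so $r>\tfrac94$ is precisely what is needed, and these terms contribute $\mu^{(-1/6)^+}T^{(1/6)^+}\norm{(\eta,\mathbf v)}_{L^\infty_TV^r_\mu}$, the first term of \eqref{eq:est_eta_desired_Dhalf_d_2}. For the forcing term I would factor out $\epsilon$ and split $\mathcal B^\pm$ as in \eqref{eq:G_nonlin} into $i\nabla\cdot(\eta\mathbf v)$ and $\pm\tfrac12\mathcal{T}_\mu^{-\frac12}(D)|D|(|\mathbf v|^2)$. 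For the first part, $\Lambda(D)\nabla$ is order two with a double-gradient structure, so $|D|^{-\frac12-\frac\theta3}\Lambda(D)\nabla$ has symbol bounded near the origin and $\lesssim\jbrack{\xi}^{3/2-\theta/3}$ globally; together with $J^\theta J_\mu^{3/4}$ this costs $\jbrack{\xi}^{9/4+O(\theta)}$, and Kato--Ponce \eqref{eq:kato_ponce_ineq} with $H^s(\R^2)\hookrightarrow L^\infty(\R^2)$ for $s>1$ gives $\norm{\eta\mathbf v}_{H^{9/4+O(\theta)}_x}\lesssim\norm{\eta}_{H^r_x}\norm{\mathbf v}_{H^r_x}\lesssim\norm{(\eta,\mathbf v)}_{V^r_\mu}^2$ for $r>\tfrac94$.

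The one genuinely delicate point is the $|\mathbf v|^2$ part of the forcing, since it carries the unbounded operator $\mathcal{T}_\mu^{-\frac12}(D)$, of symbol $\sim\jbrack{\sqrt\mu\xi}^{1/2}$, which multiplies the weight $J_\mu^{3/4}$ coming from \eqref{Refined:2d}; bounding $\jbrack{\sqrt\mu\xi}^{5/4}$ crudely by $\jbrack{\xi}^{5/4}$ would force $r>\tfrac{11}4$. The remedy I would use is that $\mathbf v$ itself carries the \emph{smoothing} operator $\sqrt{\mathcal{T}_\mu}(D)$ from \eqref{eq:def_eta_and_v_d_2}, i.e. $|\widehat{v_\ell}(\xi)|\lesssim\jbrack{\sqrt\mu\xi}^{-1/2}|\widehat{u^+-u^-}(\xi)|$. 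Writing $|\mathbf v|^2$ as a convolution in Fourier and distributing the weight by Peetre's inequality $\jbrack{\sqrt\mu\xi}\lesssim\jbrack{\sqrt\mu(\xi-\zeta)}\jbrack{\sqrt\mu\zeta}$, the factor $\jbrack{\sqrt\mu\cdot}^{5/4}$ splits into one $\jbrack{\sqrt\mu\cdot}^{5/4}$ on each $\widehat{v_\ell}$; absorbing $\jbrack{\sqrt\mu\cdot}^{1/2}$ into the smoothing of each $\mathbf v$-factor leaves only a harmless $\jbrack{\sqrt\mu\cdot}^{3/4}\le\jbrack{\cdot}^{3/4}$ on each $u^\pm$-factor, so one is again reduced to estimating $H^{9/4+O(\theta)}$ of a bilinear expression in $u^\pm$, which is $\lesssim\norm{(\eta,\mathbf v)}_{V^r_\mu}^2$ by Kato--Ponce and Sobolev for $r>\tfrac94$. (For $\Lambda(D)=\sqrt{\mathcal{T}_\mu}(D)R_j\partial_{x_k}$ this is even simpler, since $\sqrt{\mathcal{T}_\mu}(D)\mathcal{T}_\mu^{-\frac12}(D)=\mathrm{Id}$ kills the bad weight outright.) All these bounds are pointwise in $t$ with quadratic right-hand side, so H\"older's inequality in time turns $\norm{F^\pm}_{L^2_TL^2_x}$ into $\epsilon T^{1/2}\norm{(\eta,\mathbf v)}_{L^\infty_TV^r_\mu}^2$, and the prefactor $\mu^{(-1/3)^+}T^{(1/3)^+}$ of \eqref{Refined:2d} yields the second term $\epsilon\mu^{(-1/3)^+}T^{(5/6)^+}\norm{(\eta,\mathbf v)}_{L^\infty_TV^r_\mu}^2$ of \eqref{eq:est_eta_desired_Dhalf_d_2}. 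Summing over $\pm$ and over the finitely many $\Lambda(D)$ finishes the proof; the only real obstacle is the $\jbrack{\sqrt\mu\cdot}$-weight bookkeeping just described, everything else being a routine assembly of Corollary \ref{cor:refined_strichartz_d_1}, the norm equivalences \eqref{eq:equivalent_norm_Vsmu}--\eqref{eq:est_on_tilbert_invers}, Kato--Ponce, and Sobolev embedding.
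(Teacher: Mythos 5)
Your proposal is correct and follows the same overall strategy as the paper's proof: pass to the diagonalized variables $u^\pm$ of \eqref{eq:def_uplus_uminus_d2}, note that each piece of $\mathcal P$ is a zero-order Fourier multiplier $\Lambda(D)$ applied to $u^\pm$, feed $w^\pm=\Lambda(D)u^\pm$ into the refined Strichartz estimate \eqref{Refined:2d}, and close the resulting estimates via the norm equivalences \eqref{eq:equivalent_norm_Vsmu}, \eqref{eq:est_on_tilbert_J_mu}--\eqref{eq:est_on_tilbert_invers}, the Riesz bounds, Kato--Ponce \eqref{eq:kato_ponce_ineq}, and Sobolev embedding. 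In particular, the paper also applies \eqref{Refined:2d} first with $u=\nabla u^\pm$, $F=-i\tfrac\epsilon2\nabla\mathcal B^\pm$, and then notes that $\nabla\mathbf v$ and $\mathcal T_\mu^{-1/2}(D)\nabla\mathbf v$ are handled identically by replacing $\nabla$ with $\sqrt{\mathcal T_\mu}(D)\nabla\mathbf R$ and $\nabla\mathbf R$.

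The one place where you genuinely diverge from the paper is the treatment of the $\mathcal T_\mu^{-\frac12}(D)|D|(|\mathbf v|^2)$ part of the forcing. You write $|\mathbf v|^2$ as a Fourier convolution of two copies of $\sqrt{\mathcal T_\mu}(D)\mathbf R(u^+-u^-)$, distribute the weight $\jbrack{\sqrt\mu\,\cdot\,}^{5/4}$ across the two factors by Peetre's inequality, and cancel $\jbrack{\sqrt\mu\,\cdot\,}^{-1/2}$ against the smoothing of $\sqrt{\mathcal T_\mu}(D)$ in each factor, leaving $\jbrack{\sqrt\mu\,\cdot\,}^{3/4}\le\jbrack{\,\cdot\,}^{3/4}$ per factor. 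The paper instead keeps everything at the level of Sobolev norms: it bounds the symbol of $J_\mu^{3/4}\mathcal T_\mu^{-1/2}(D)$ by $\jbrack{\xi}^{3/4}\bigl(1+\mu^{1/4}|\xi|^{1/2}\bigr)$, yielding the two terms $\norm{J^r(|\mathbf v|^2)}_{L^2_x}+\mu^{1/4}\norm{J^{r+1/2}(|\mathbf v|^2)}_{L^2_x}$, and then uses Kato--Ponce together with the built-in $\sqrt\mu\norm{|D|^{1/2}\mathbf v}^2_{H^r_x}$ part of the $V^r_\mu$-norm (via $\mu^{1/4}\norm{\mathbf v}_{H^{r+1/2}_x}\lesssim\norm{(\eta,\mathbf v)}_{V^r_\mu}$) to absorb the extra half derivative. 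Both routes are valid and give $r>\tfrac94$; the paper's version is arguably tidier since it avoids a bilinear convolution estimate and works entirely with product-Sobolev bounds, while yours makes the cancellation mechanism more explicit. If you keep your convolution/Peetre argument, do make sure you carry out the final bilinear $L^2$ bound carefully (e.g.\ by a dyadic high--low split with the large derivative on the high-frequency factor and an $L^\infty$/Sobolev bound on the low one, or equivalently a Young-type bound $L^1_\xi * L^2_\xi\to L^2_\xi$ on the factor carrying fewer derivatives), since Kato--Ponce as stated in \eqref{eq:kato_ponce_ineq} applies to a single multiplier $m(\xi)$, not to a tensorized weight $\jbrack{\xi-\zeta}^{3/4}\jbrack{\zeta}^{3/4}$.
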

\begin{proof}
    %Estimating \eqref{eq:est_eta_desired_Dhalf_d_2} and \eqref{eq:est_P_part2} follows the same steps; we therefore omit the proof of \eqref{eq:est_P_part2}. 
   
    From \eqref{eq:def_eta_and_v_d_2}, we obtain
   \begin{align*}
        \norm{\nabla\eta}_{L^2_TL^\infty_x} &= \norm{ \nabla (u^+ + u^-) }_{L^2_TL^\infty_x} \\ \notag
        &\leq \norm{\nabla u^+}_{L^2_TL^\infty_x}  + \norm{ \nabla u^-}_{L^2_TL^\infty_x}\\ \notag
        &=: \sum_\pm  \norm{ \nabla u^\pm }_{L^2_TL^\infty_x}. 
   \end{align*}
   With the help of \eqref{Refined:2d} in Corollary \ref{cor:refined_strichartz_d_1} for  $u= \nabla u^\pm$ and $F =  -i\frac{\epsilon}{2}\nabla \mathcal{B}^\pm (\eta,\mathbf{v})$ (with $\mathcal{B}^\pm (\eta,\mathbf{v})$ defined in \eqref{eq:G_nonlin}), we obtain
    \begin{align}\label{eq:est2_eta_d_2}
        \norm{\nabla \eta}_{L^2_TL^\infty_x} &\lesssim \mu^{-\frac{1}{6} + \frac{5\theta}{36}} T^{\frac{1}{6} + \frac{5\theta}{18}} \sum_\pm \Big( \norm{J^{\theta} |D|u^\pm}_{L_T^\infty L_x^2}
+ \norm{\abs{D}^{\frac{3}{2}-\frac{\theta}{3}} J^{2\theta} J_\mu^{\frac{3}{4}}  u^\pm}_{L_T^\infty L_x^2}\Big)
  \\ \notag
 & + \mu^{-\frac{1}{3}+\frac{\theta}{9}} T^{\frac{1}{3}+\frac{2\theta}{9}} \sum_\pm\norm{\abs{D}^{\frac{1}{2}-\frac{\theta}{3}} J^\theta J_\mu^{\frac{3}{4}} \mathcal{B}^\pm (\eta,\mathbf{v})}_{L_T^2 L_x^2}\\ \notag
  &=:\mu^{-\frac{1}{6}+\frac{5\theta}{36}} T^{\frac{1}{6}+\frac{5\theta}{18}} I_1 +\mu^{-\frac{1}{3}+\frac{\theta}{9}} T^{\frac{1}{3}+\frac{2\theta}{9}}  I_2.
    \end{align}
    for $\theta>0$ small. Using \eqref{eq:def_uplus_uminus_d2}, \eqref{eq:riesz_transform_est}, and \eqref{eq:equivalent_norm_Vsmu} implies
    \begin{align}\label{eq:est2_eta_d_2_part_2}
        I_1 &\lesssim  \norm{J^\frac{3}{4}_\mu J^{\frac{3}{2} +\frac{5\theta}{3}} \eta}_{L^\infty_TL^2_x} + \norm{J^\frac{3}{4}_\mu J^{\frac{3}{2} +\frac{5\theta}{3}} \mathcal{T}_{\mu}^{-\frac{1}{2}}(D) \mathbf{R} \cdot  \mathbf{v} }_{L^\infty_TL^2_x}\\ \notag
        %&\lesssim \norm{\eta}_{L^\infty_TH^{r}_x} + \norm{\mathbf{v}}_{L^\infty_TH^{r}_x} + \sqrt{\mu} \norm{|D|^\frac{1}{2} \mathbf{v}}_{L^\infty_TH^{r}_x}\\  \notag
        &\lesssim \norm{(\eta,\mathbf{v})}_{L^\infty_TV^{r}_\mu}
    \end{align}
    for $r>\frac{9}{4}$. To estimate $I_2$ in \eqref{eq:est2_eta_d_2}, we use \eqref{eq:est_on_tilbert_invers}, H\"{o}lder's inequality, the Kato-Ponce inequality \eqref{eq:kato_ponce_ineq}, a Sobolev embedding, and \eqref{eq:J_mu_to_f_H_s}. This yields
    \begin{align}\label{eq:est2_eta_d_2_part_3}
        I_2 &\lesssim \norm{J^\frac{3}{4}_\mu J^{\theta} |D|^{\frac{3}{2}-\frac{\theta}{3}} (\eta \mathbf{v})}_{L^2_TL^2_x} +  \norm{J^\frac{3}{4}_\mu J^\theta   \mathcal{T}_{\mu}^{-\frac{1}{2}}(D) |D|^{\frac{3}{2}-\frac{\theta}{3}}(|\mathbf{v}|^2)}_{L^2_TL^2_x} \\ \notag
        &\lesssim T^\frac{1}{2}\Big( \norm{J^{r} (\eta\mathbf{v})}_{L^\infty_TL^2_x} +  \norm{J^{r} (|\mathbf{v}|^2)}_{L^\infty_TL^2_x} +  \mu^\frac{1}{4}\norm{J^{r+\frac{1}{2}}(|\mathbf{v}|^2)}_{L^\infty_TL^2_x}\Big) \\ \notag
        %&\lesssim T^\frac{1}{2}\Big( \norm{\eta}_{L^\infty_TH^{r}_x}\norm{\mathbf{v}}_{L^\infty_TH^{r}_x}  +  \norm{\mathbf{v}}_{L^\infty_TH^{r}_x}^2 +  \mu^\frac{1}{4}\norm{\mathbf{v}}_{L^\infty_TH^{r+\frac{1}{2}}_x}\norm{\mathbf{v}}_{L^\infty_TH^{r}_x}\Big)\\ \notag
        &\lesssim T^\frac{1}{2} \norm{(\eta,\mathbf{v})}_{V^{r}_\mu}^2
    \end{align}
    for $r>\frac{9}{4}$. Gathering the estimates of $I_1$ and $I_2$ in \eqref{eq:est2_eta_d_2} gives us \eqref{eq:est_eta_desired_Dhalf_d_2}.
    \\
    \indent Next, from \eqref{eq:def_eta_and_v_d_2}, we have
    \begin{align*}%\label{eq:est1_v_d_2_part_1}
        \norm{\nabla \mathbf{v}}_{L^2_TL^\infty_x} +  \norm{ \mathcal{T}_{\mu}^{-\frac{1}{2}}(D) \nabla \mathbf{v}}_{L^2_TL^\infty_x} %&= \norm{ \sqrt{\mathcal{T}_\mu}(D) \nabla\mathbf{R} (u^+ - u^-) }_{L^2_TL^\infty_x} \\ \notag
        %&\leq \norm{ \sqrt{\mathcal{T}_\mu}(D) \nabla \mathbf{R}u^+}_{L^2_TL^\infty_x}  + \norm{ \sqrt{\mathcal{T}_\mu}(D) \nabla\mathbf{R} u^- _2}_{L^2_TL^\infty_x}\\ \notag
        &\leq \sum_\pm  \norm{ \sqrt{\mathcal{T}_\mu}(D) \nabla\mathbf{R} u^\pm }_{L^2_TL^\infty_x} + \sum_\pm  \norm{\nabla\mathbf{R} u^\pm }_{L^2_TL^\infty_x}.
        %\\
        %&\lesssim \sum_\pm  \norm{\nabla\mathbf{R} u^\pm }_{L^2_TL^\infty_x}.
    \end{align*}
    Utilizing \eqref{Refined:2d} in Corollary  \ref{cor:refined_strichartz_d_1}, \eqref{eq:est_on_tilbert}, and %for $u= \nabla \mathbf{R}u^\pm$ and $F =  -i\frac{\epsilon}{2}\nabla \mathbf{R}\mathcal{B}^\pm (\eta,\mathbf{v})$ (for $\mathcal{B}^\pm (\eta,\mathbf{v})$ given in \eqref{eq:G_nonlin})
    \eqref{eq:riesz_transform_est}, we obtain the estimate in the same way as \eqref{eq:est2_eta_d_2}--\eqref{eq:est2_eta_d_2_part_3}.
    %\begin{align*}%\label{eq:est1_v_d_2}
    %    \norm{ \nabla \mathbf{v}}_{L^2_TL^\infty_x} 
    %    &\lesssim  \mu^{(-\frac{1}{6})^+ }   T^{\frac{1}{6}^+} \sum_\pm \norm{J^\frac{3}{4}_\mu J^{\frac{3}{2} +\frac{5\theta}{3}} u^\pm}_{L^\infty_TL^2_x} +  \epsilon\mu^{(-\frac{1}{3})^+}  T^{\frac{1}{3}^+} \sum_\pm\norm{J^\frac{3}{4}_\mu J^{\frac{1}{2}+\frac{2\theta}{3}} \mathcal{B}^\pm (\eta,\mathbf{v})}_{L^2_TL^2_x}
    %\end{align*}
    %for any $\theta>0$, which now can be estimated in the same way as \eqref{eq:est2_eta_d_2}--\eqref{eq:est2_eta_d_2_part_3}. 
    %Lastly, we have from \eqref{eq:def_eta_and_v_d_2} that
    %\begin{align*}
    %    \norm{ \mathcal{T}_{\mu}^{-\frac{1}{2}}(D) \nabla \mathbf{v}}_{L^2_TL^\infty_x} %&\leq \norm{ \nabla\mathbf{R} (u^+ - u^-) }_{L^2_TL^\infty_x} \\
    %    &\leq \norm{\nabla\mathbf{R} u^+}_{L^2_TL^\infty_x} + \norm{\nabla\mathbf{R} u^- }_{L^2_TL^\infty_x},
    %\end{align*}
    %which can be estimated as \eqref{eq:est1_v_d_2_part_1}--\eqref{eq:est1_v_d_2}.
\end{proof}

\subsection{Proof of Theorem \ref{thm:wellposedness_WB} in the two-dimensional case}\label{sec:proof_main_res_two_dims}
The proof relies on an energy method combined with a refined Strichartz estimate.  % with the energy method, resulting in modified arguments. 

\begin{proof}[Proof of Theorem \ref{thm:wellposedness_WB} for $d=2$]

We first recall a local well-posedness result for \eqref{eq:whitham_boussinesq} and \eqref{eq:whitham_boussinesq_d2}.
\begin{thm}\label{thm:paulsen}
    For $d\in \{1,2\}$ with $s>\frac{d}{2}+1$ and $\epsilon,\mu \in (0,1]$. Assume that $(\eta_0,\mathbf{v}_0)\in V^s_\mu(\R^d)$ satisfy the non-cavitation condition \eqref{eq:noncav}.  If $d=2$, we additionally assume that $\mathrm{curl}\: \mathbf{v}_0 = 0$.  Then there exists a positive time 
    \begin{equation*}
         T = T(\norm{(\eta_0,\mathbf{v}_0)}_{V^s_\mu})= \frac{1}{ c(d,s,h_0,H_0)  \norm{(\eta_0,\mathbf{v}_0)}_{V^s_\mu}} 
    \end{equation*}
   with $H_0 = 1+ \epsilon\norm{(\eta_0,\mathbf{v}_0)}_{L^\infty_x\times L^\infty_x}$    
    such that \eqref{eq:whitham_boussinesq} and \eqref{eq:whitham_boussinesq_d2} have a unique solution
    \begin{align*}
        (\eta,\mathbf{v}) \in C([0,T];V^s_\mu(\R^d) ) \cap C^1([0,T];V^{s-1}_\mu(\R^d) )
    \end{align*}
    that satisfies 
    \begin{align*}
        \sup_{t\in [0,T]} \norm{ (\eta,\mathbf{v})}_{V^s_\mu} \lesssim \norm{ (\eta_0,\mathbf{v}_0)}_{V^s_\mu}.
    \end{align*}
    Here, the implicit constant depends on $h_0$ and $H_0$.     Additionally, the flow map is continuous with respect to the initial data. Furthermore, higher regularity persists. That is, if $(\eta_0,\mathbf{v}_0) \in V^r_\mu(\R^d)$ for some $r > s$, then $(\eta,\mathbf{v})$ belongs to $C([0, T ] ; V^r_\mu(\R^d))$ (with the same $T$).
\end{thm}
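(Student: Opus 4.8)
This is the local well-posedness result of \cite{Paulsen2022}, so one legitimate option is simply to invoke it; for completeness I outline how I would carry out the proof by the classical energy method for quasilinear symmetrizable systems, following \cite{Paulsen2022} (which in turn adapts \cite{Saut2012}). I would work with the compact form $\partial_t\mathbf{U}+\mathcal{M}(\mathbf{U},D)\mathbf{U}=\mathbf{0}$ from \eqref{eq:compact_eq_d_2} (and its one-dimensional analogue). The symmetrizer $\mathcal{Q}(\mathbf{U},D)$ of \eqref{eq:Q_d_2} is positive definite precisely because $\mathcal{T}_\mu(D)$ is a positive bounded Fourier multiplier and the non-cavitation condition \eqref{eq:noncav} gives $1+\epsilon\eta\geq h_0>0$; the corresponding energy $E_s(\mathbf{U})$ from Definition \ref{def:energy} is then equivalent to $\norm{(\eta,\mathbf{v})}_{V^s_\mu}^2$, with constants depending only on $h_0$ and $H_0=1+\epsilon\norm{(\eta_0,\mathbf{v}_0)}_{L^\infty_x\times L^\infty_x}$, by \eqref{eq:equivalent_norm_Vsmu}.

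First I would regularize, e.g.\ by Friedrichs truncation $P_{\leq N}$, replacing the nonlinearity by $P_{\leq N}\mathcal{M}(P_{\leq N}\mathbf{U},D)P_{\leq N}\mathbf{U}$; this turns the problem into an ODE in $V^s_\mu(\R^d)$ with locally Lipschitz right-hand side, solved on a maximal interval by Cauchy--Lipschitz. Differentiating $E_s(\mathbf{U}_N)$ in time and exploiting the structure of $\mathcal{Q}\mathcal{M}$ — the top-order skew-symmetric part cancels, and the remainder is estimated by the Kato--Ponce commutator estimate \eqref{eq:kato_ponce} together with the nonlocal commutator estimates of Lemmas \ref{lem:commutator_est_tilbert}--\ref{lem:commutator_inv_tilbert} and the identities \eqref{eq:est_on_tilbert}--\eqref{eq:est_on_tilbert_invers}, using the Sobolev embedding $H^{s-1}\hookrightarrow L^\infty$ (valid since $s>\tfrac{d}{2}+1$, so the sharper $L^\infty$-version in Proposition \ref{prop:energy_ests_d_2} is not needed here) — yields $\tfrac{d}{dt}E_s(\mathbf{U}_N)\lesssim\epsilon\, C(\norm{\mathbf{U}_N}_{V^s_\mu})\,E_s(\mathbf{U}_N)$, uniformly in $N$ and in $\mu\in(0,1]$. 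A bootstrap argument then produces a uniform existence time $T\sim 1/\bigl(c(d,s,h_0,H_0)\norm{(\eta_0,\mathbf{v}_0)}_{V^s_\mu}\bigr)$, a uniform bound $\sup_{[0,T]}\norm{\mathbf{U}_N}_{V^s_\mu}\lesssim\norm{(\eta_0,\mathbf{v}_0)}_{V^s_\mu}$, and persistence of the non-cavitation condition with constant $h_0/2$ on $[0,T]$.

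From the equation, $\partial_t\mathbf{U}_N$ is uniformly bounded in $L^\infty([0,T];V^{s-1}_\mu)$, so by Aubin--Lions--Simon a subsequence converges strongly in $C([0,T];V^{s'}_\mu)$ for every $s'<s$ and weakly-$*$ in $L^\infty([0,T];V^s_\mu)$; the limit $\mathbf{U}$ solves \eqref{eq:compact_eq_d_2}, with $\mathrm{curl}\,\mathbf{v}=0$ preserved in two dimensions (apply $\mathrm{curl}$ to the $\mathbf{v}$-equation), which makes \eqref{eq:whitham_boussinesq_d2} equivalent to \eqref{eq:compact_eq_d_2} as in Section \ref{sec:energy_ests_d_2}. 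Continuity at the top regularity, $\mathbf{U}\in C([0,T];V^s_\mu)$, follows from the Bona--Smith argument (weak continuity plus continuity of $t\mapsto E_s(\mathbf{U}(t))$). Uniqueness comes from a $V^0_\mu$-energy estimate for the difference of two solutions using the same symmetrizer and Gr\"onwall; continuous dependence on the data again follows from the Bona--Smith method — smooth the data, use the uniform bounds and uniqueness, and pass to the limit. Finally, persistence of higher regularity follows because the $V^r_\mu$-energy obeys $\tfrac{d}{dt}E_r(\mathbf{U})\lesssim\epsilon\, C(\norm{\mathbf{U}}_{V^s_\mu})\,E_r(\mathbf{U})$ with a rate controlled by the already-bounded $V^s_\mu$ norm, so Gr\"onwall keeps $\norm{\mathbf{U}(t)}_{V^r_\mu}$ finite on the same interval $[0,T]$, again upgraded to time-continuity by Bona--Smith.

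I expect the only genuinely delicate point — beyond the standard symmetric-hyperbolic template — to be closing the energy estimate in the presence of the nonlocal operator $\mathcal{T}_\mu(D)$ multiplied by the quasilinear coefficient $1+\epsilon\eta$: one cannot integrate by parts against $\mathcal{T}_\mu(D)$, so the cancellation of the top-order terms must be arranged through the commutator estimates for $J^s\sqrt{\mathcal{T}_\mu}(D)$ and $J^s\mathcal{T}_\mu^{-1/2}(D)$, exactly as the $\mathcal{A}_{21}$-type terms are treated in the proof of Proposition \ref{prop:energy_ests_d_2}. Keeping all constants uniform in $\mu\in(0,1]$ is the second point to watch, but this is guaranteed by the bounds $\mathcal{N}^s(J^s\sqrt{\mathcal{T}_\mu})\lesssim 1$ and $\mathcal{N}^{s+1/2}(J^s\chi^{(2)}_\mu\mathcal{T}_\mu^{-1/2})\lesssim\mu^{1/4}$ established in Section \ref{sec:commutator_ests}.
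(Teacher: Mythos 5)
Your proposal is correct and matches the paper's treatment: the paper simply invokes \cite[Theorem 1.11]{Paulsen2022} (an energy method with the symmetrizer $\mathcal{Q}$, in the spirit of \cite{Saut2012}) and remarks that the smallness condition $0<\epsilon c\norm{(\eta_0,\mathbf{v}_0)}_{V^s_\mu}\le 1$ imposed there can be removed by running the bootstrap of Lemma \ref{lem:bootstrap_wb} with a Sobolev embedding in place of the refined Strichartz estimate — which is exactly what your bootstrap producing $T\sim 1/\bigl(c(d,s,h_0,H_0)\norm{(\eta_0,\mathbf{v}_0)}_{V^s_\mu}\bigr)$ accomplishes. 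Your fuller outline (Friedrichs regularization, the $\mathcal{A}_{21}$-type commutator cancellations, Bona--Smith for top-order continuity and continuous dependence) is the standard machinery behind that reference and is consistent with the paper's own energy estimates in Proposition \ref{prop:energy_ests_d_2}.
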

\begin{remark}
    The above theorem was proved on the timescale $1/\epsilon$  under the condition 
\begin{align*}
        0<\epsilon c \norm{(\eta_0,\mathbf{v}_0)}_{V^s_\mu} \leq 1
\end{align*}
for $s>\frac{d}{2}+1$ in \cite[Theorem 1.11]{Paulsen2022}. However, one can use the same arguments as in the proof of Lemma \ref{lem:bootstrap_wb} below, where one has to substitute the refined Strichartz estimates (Proposition \ref{prop:boundedness_eta_v_d_2}) with a classical Sobolev embedding to obtain the well-posedness result for arbitrary data.
\end{remark}
%\indent We will show that the time of existence can be improved from the timescale $1/\epsilon$ obtained in \cite[Theorem 1.11]{Paulsen2022} to 
%\begin{align}\label{eq:wb_time}
%    T_1 = \frac{1}{c\epsilon^{\frac{5}{4}}} \Big(\frac{\mu}{\epsilon}\Big)^{\frac{1}{4}^-} 
%\end{align}
%for some $c=c(h_0,H_0,\norm{(\eta_0,\mathbf{v}_0)}_{V^s_\mu} )>0$.

It will suffice to show that the solution exists on the timescale $\epsilon^{-\frac{5}{4}} (\mu/\epsilon)^{\frac{1}{4}^-}$ to complete the proof of  Theorem \ref{thm:wellposedness_WB} as the rest follows by  Theorem \ref{thm:paulsen}. Indeed, the uniqueness, continuous dependence on the data, and persistence of higher regularity are local-in-time properties (see Remarks \ref{rem:w_rem1}--\ref{rem:w_rem3}).
%Then we will prove Theorem \ref{thm:wellposedness_WB}, since the uniqueness, continuous dependence on the data, and persistence of higher regularity, on the other hand, are local-in-time properties and follow directly from Theorem \ref{thm:paulsen}; see the remarks at the end of this section. The proof that the solution exists on the timescale \eqref{eq:wb_time} relies on an energy method with a bootstrap argument, where we combine refined Strichartz estimates with the energy method.

 %To prove the claim, we will argue by contradiction.
   
   From Theorem \ref{thm:paulsen}, it follows that an initial datum $(\eta_0,\mathbf{v}_0)\in V^s_\mu(\R^2)$, for $s>2$, generate a solution to \eqref{eq:compact_eq_d_2}, $(\eta,\mathbf{v}) \in C([0,T^\ast);V^s_\mu(\R^2))$, where $T^\ast\geq T(\norm{(\eta_0,\mathbf{v}_0)}_{V^s_\mu})$ is the maximal time of existence. Since $T$ is a decreasing function of its argument, we may deduce classically that the solution of \eqref{eq:compact_eq_d_2} satisfies the blow-up alternative:
\begin{align}\label{eq:BA_d2}
    \text{If } \ \ T^\ast <\infty, \text{ then } \limsup_{t\nearrow T^\ast} \norm{(\eta,\mathbf{v})(t)}_{V^s_\mu} = \infty.
\end{align}
%By continuity, if the claim fails, then for some $S \in (0,T]$ we have 
%\begin{equation}\label{eq:w_apriori2}
%     A(t) \le 2A_0
%\end{equation}
%for $t \in [0,S]$, and equality holds for $t=S$. 

\begin{lem}\label{lem:bootstrap_wb}
    Let $s>\frac{9}{4}$ and $\epsilon,\mu\in (0,1]$. 
   % Let $(\eta,\mathbf{v}) \in C([0,T^\ast);V^s_\mu(\R^d))$ be the solution to \eqref{eq:compact_eq_d_2} with the initial data $(\eta_0,\mathbf{v}_0)\in V^s_\mu(\R^d)$, where $T^\ast\geq T(\norm{(\eta_0,\mathbf{v}_0)}_{V^s_\mu})$ is the maximal time of existence satisfying the blow-up alternative:
   % \begin{align*}
   %     \text{If } \ \ T^\ast <\infty, \text{ then } \limsup_{t\nearrow T^\ast} \norm{(\eta,\mathbf{v})(t)}_{V^s_\mu} = \infty.
    %\end{align*} 
    Then there exists a time
    \begin{align}\label{eq:def_T_0_d_2}
        T_1 = \epsilon^{-\frac{5}{4}}  \Big(\frac{\mu}{\epsilon}\Big)^{\frac{1}{4}^-} (C_1)^{-1}\norm{(\eta_0,\mathbf{v}_0)}_{V^s_\mu}^{(-\frac{3}{2})^+}
    \end{align}
    such that $T^\ast > T_1$ and
    \begin{align*}%\label{eq:wb_apriori1}
        \sup_{t\in [0,T_1]}\norm{(\eta,\mathbf{v})(t)}_{V^s_\mu} \leq 5 c \sqrt{\frac{H_0}{h_0}}  \norm{(\eta_0,\mathbf{v}_0)}_{V^s_\mu}
    \end{align*}
   where 
   $H_0 = 1+ \epsilon\norm{(\eta_0,\mathbf{v}_0)}_{L^\infty_x\times L^\infty_x}$, $h_0 \in (0,1)$, and  $C_1 =C_1(h_0,H_0)$ is a positive constant fixed in the proof. 
\end{lem}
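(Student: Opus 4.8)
\medskip

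\noindent\emph{Proof plan.} The argument is a continuity (bootstrap) argument in the spirit of Lemma~\ref{lem:bootstrap_whitham}, now driven by the energy inequality \eqref{eq:derivative_energy_d_2}--\eqref{eq:energy_sim_V_d_2} of Proposition~\ref{prop:energy_ests_d_2} and the refined Strichartz estimate of Proposition~\ref{prop:boundedness_eta_v_d_2}. Write $N:=\norm{(\eta_0,\mathbf v_0)}_{V^s_\mu}$ and $H_0:=1+\epsilon\norm{(\eta_0,\mathbf v_0)}_{L^\infty_x\times L^\infty_x}$, and let $(\eta,\mathbf v)\in C([0,T^\ast);V^s_\mu(\R^2))$ be the maximal solution from Theorem~\ref{thm:paulsen}, which obeys \eqref{eq:BA_d2} and, by Section~\ref{sec:energy_ests_d_2}, has $\mathrm{curl}\,\mathbf v\equiv0$. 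Since $s>\tfrac94$ we have $s>2$ and $s>1+\tfrac d2$, so Proposition~\ref{prop:energy_ests_d_2}, Corollary~\ref{cor:wb_gron}, and Proposition~\ref{prop:boundedness_eta_v_d_2} all apply with $r=s$. Fix a large $C_1=C_1(h_0,H_0)>0$, chosen at the end, and let $T_1$ be as in \eqref{eq:def_T_0_d_2}. Assuming for contradiction that
\[
  \widetilde T:=\sup\Big\{T\in(0,T^\ast):\ \sup_{[0,T]}\norm{(\eta,\mathbf v)(t)}_{V^s_\mu}\le 5c\sqrt{H_0/h_0}\,N,\ \ \sup_{[0,T]}H(t)\le 2H_0,\ \ \inf_{\R^2\times[0,T]}\big(1+\epsilon\eta\big)\ge\tfrac{h_0}{2}\Big\}
\]
satisfies $\widetilde T\le T_1$, where $H(t)=1+\epsilon\norm{(\eta,\mathbf v)(t)}_{L^\infty_x\times L^\infty_x}$, I shall derive a contradiction. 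All three defining inequalities hold strictly at $t=0$, so $\widetilde T>0$ by continuity of $(\eta,\mathbf v)$ in $C([0,T^\ast);V^s_\mu)\hookrightarrow C([0,T^\ast);L^\infty_x\times L^\infty_x)$.

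Fix $T'\in(0,\widetilde T)$; on $[0,T']$ all three bootstrap bounds are in force. Feeding the bound on $\norm{(\eta,\mathbf v)}_{L^\infty_{T'}V^s_\mu}$ into Proposition~\ref{prop:boundedness_eta_v_d_2} and using monotonicity in $T'<\widetilde T\le T_1$, one gets $\norm{\mathcal{P}}_{L^2(0,T')}\lesssim \mu^{(-\frac16)^+}T_1^{(\frac16)^+}\sqrt{H_0/h_0}\,N+\epsilon\,\mu^{(-\frac13)^+}T_1^{(\frac56)^+}(H_0/h_0)\,N^2$. Since $H(\tau)\le 2H_0$ on $[0,T']$, H\"older in time bounds the Gr\"onwall exponent in \eqref{eq:gronwall_wb} by $\epsilon\int_0^{T'}H\mathcal{P}\,\d\tau\le 2H_0\,\epsilon\,(T')^{1/2}\norm{\mathcal{P}}_{L^2(0,T')}$. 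Inserting \eqref{eq:def_T_0_d_2} — whose exponents $(\tfrac14)^-$ and $(-\tfrac32)^+$ are chosen precisely so that $\epsilon\,\mu^{-1/6}T_1^{2/3}N\sim C_1^{-2/3}$ and $\epsilon^2\mu^{-1/3}T_1^{4/3}N^2\sim C_1^{-4/3}$, the arbitrarily small losses in the $(\cdot)^{\pm}$ exponents being matched against those in Proposition~\ref{prop:boundedness_eta_v_d_2} — one finds $\epsilon\int_0^{T'}H\mathcal{P}\,\d\tau\le C(h_0,H_0)\big(C_1^{-2/3}+C_1^{-4/3}\big)$. Integrating \eqref{eq:derivative_energy_d_2} and using \eqref{eq:energy_sim_V_d_2} with $E_s(\mathbf U(0))\lesssim H_0N^2$ then gives $\norm{(\eta,\mathbf v)(t)}_{V^s_\mu}^2\lesssim (H_0/h_0)\,N^2\,e^{C(h_0,H_0)(C_1^{-2/3}+C_1^{-4/3})}$ on $[0,T']$.

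The remaining point — propagating the other two bootstrap conditions — is where the real difficulty sits, since $E_s$ controls the $V^s_\mu$-norm but not the $L^\infty_x$-size. I would estimate $\epsilon\norm{(\eta,\mathbf v)(t)-(\eta_0,\mathbf v_0)}_{L^\infty_x\times L^\infty_x}\le\epsilon(T')^{1/2}\norm{\partial_t(\eta,\mathbf v)}_{L^2_{T'}L^\infty_x}$. Here the cheap bound $\epsilon\int_0^{T_1}\norm{\partial_t(\eta,\mathbf v)}_{L^\infty_x}\,\d\tau\le\epsilon T_1\sup\norm{\partial_t(\eta,\mathbf v)}_{L^\infty_x}$ is useless, since $\epsilon T_1\sim\epsilon^{-1/2}\mu^{1/4}C_1^{-1}N^{-3/2}$ need not be small, and moreover estimating the quadratic terms $\epsilon\nabla\cdot(\eta\mathbf v)$, $\epsilon\nabla(\abs{\mathbf v}^2)$ pointwise as products would cost a factor of the a priori-uncontrolled quantity $\epsilon\norm{(\eta_0,\mathbf v_0)}_{V^s_\mu}$ that cannot be absorbed into $C_1=C_1(h_0,H_0)$. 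Instead, since $\mathrm{curl}\,\partial_t\mathbf v\equiv0$, the pair $(\partial_t\eta,\partial_t\mathbf v)$ solves the linear part of \eqref{eq:compact_eq_d_2} with source the $t$-derivative of the quadratic nonlinearity; after the same diagonalization \eqref{eq:def_uplus_uminus_d2}--\eqref{eq:system_dig_d2} one may apply Corollary~\ref{cor:refined_strichartz_d_1} to $\partial_t u^\pm$ (using that the extra factors $\sqrt{\mathcal{T}_\mu}(D)\mathbf R$ and $\sqrt{\mathcal{T}_\mu}(D)\abs{D}$ have symbols $\lesssim\abs{\xi}$ bounded by $\langle\xi\rangle$, hence cost only one derivative and no $\mu$), exactly as in the proof of Proposition~\ref{prop:boundedness_eta_v_d_2}; the quadratic nonlinearity then lands in the $F$-term of \eqref{Refined:2d}, carrying an extra $\epsilon$ and the favourable weight $\mu^{(-\frac13)^+}T^{(\frac13)^+}$. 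This yields $\norm{\partial_t(\eta,\mathbf v)}_{L^2_{T'}L^\infty_x}$ bounded by the same right-hand side as $\norm{\mathcal{P}}_{L^2_{T'}}$, whence $\epsilon\norm{(\eta,\mathbf v)(t)-(\eta_0,\mathbf v_0)}_{L^\infty_x\times L^\infty_x}\le C(h_0,H_0)\big(C_1^{-2/3}+C_1^{-4/3}\big)$ with no spurious $\epsilon N$ factor.

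Choosing $C_1=C_1(h_0,H_0)$ large enough that the three right-hand sides above are $\le\log 2$, $\le\tfrac{h_0}{4}$, and give $\norm{(\eta,\mathbf v)(t)}_{V^s_\mu}\le 2c\sqrt{H_0/h_0}\,N$ on $[0,T']$, all three bootstrap inequalities then hold strictly there ($\norm{(\eta,\mathbf v)(t)}_{V^s_\mu}<5c\sqrt{H_0/h_0}\,N$, $\ H(t)\le H_0+\tfrac{h_0}{4}<2H_0$, $\ 1+\epsilon\eta\ge h_0-\tfrac{h_0}{4}>\tfrac{h_0}{2}$). As this holds for every $T'\in(0,\widetilde T)$ and the $V^s_\mu$-norm stays bounded — so $\widetilde T<T^\ast$ by \eqref{eq:BA_d2} — continuity of the solution past $\widetilde T$ produces some $\tau\in(\widetilde T,T^\ast)$ still in the defining set, contradicting the definition of $\widetilde T$. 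Hence $\widetilde T>T_1$, which is the claim. The main obstacle is exactly the propagation of $H(t)$ (and of the non-cavitation threshold) without paying a power of $\epsilon\norm{(\eta_0,\mathbf v_0)}_{V^s_\mu}$: this forces the Strichartz (rather than energy) treatment of $\partial_t(\eta,\mathbf v)$, and, combined with the requirement that the final smallness parameter be a pure negative power of $C_1$ with constant depending only on $h_0$ and $H_0$, is what pins down the exponents $(\tfrac14)^-$ and $(-\tfrac32)^+$ in \eqref{eq:def_T_0_d_2}.
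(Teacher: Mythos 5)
Your overall skeleton coincides with the paper's: a bootstrap on the $V^s_\mu$-norm, Gr\"onwall via \eqref{eq:gronwall_wb} with the exponent controlled through Proposition \ref{prop:boundedness_eta_v_d_2}, the exponent arithmetic $\epsilon\mu^{-1/6}T_1^{2/3}N\sim C_1^{-2/3}$ (which is exactly the paper's $\delta$), and a separate propagation of $H(t)\le 2H_0$ and of the non-cavitation threshold through $\epsilon T^{1/2}\norm{\partial_t(\eta,\mathbf v)}_{L^2_TL^\infty_x}$. Including all three conditions in the bootstrap set, rather than proving two of them as an auxiliary claim as the paper does, is a harmless cosmetic variant.

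The problem is in the third paragraph, where you reject the direct estimate of the quadratic terms and replace it by an unestablished detour. Your claimed obstruction — that bounding $\epsilon\nabla\cdot(\eta\mathbf v)$ and $\epsilon(\mathbf v\cdot\nabla)\mathbf v$ pointwise as products "would cost a factor of the a priori-uncontrolled quantity $\epsilon\norm{(\eta_0,\mathbf v_0)}_{V^s_\mu}$" — is false. Writing, e.g., $\epsilon\norm{\nabla\cdot(\eta\mathbf v)}_{L^\infty_x}\le \epsilon\norm{\mathbf v}_{L^\infty_x}\norm{\nabla\eta}_{L^\infty_x}+\epsilon\norm{\eta}_{L^\infty_x}\norm{\nabla\cdot\mathbf v}_{L^\infty_x}$, the prefactor $\epsilon$ pairs with the \emph{undifferentiated} factor in $L^\infty_x$, and $\epsilon\norm{(\eta,\mathbf v)(t)}_{L^\infty_x\times L^\infty_x}\le H(t)\le 2H_0$ is one of \emph{your own} bootstrap hypotheses (and part of the paper's claim \eqref{Apriori}); no Sobolev embedding producing $\epsilon N$ is ever needed, and $H_0$ is explicitly allowed in $C_1(h_0,H_0)$. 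This gives $\epsilon T^{1/2}\norm{\partial_t\eta}_{L^2_TL^\infty_x}\lesssim H_0\bigl(\epsilon T^{1/2}\norm{\mathcal P}_{L^2(0,T)}+\epsilon T^{1/2}\norm{\nabla\cdot\mathbf v}_{L^2_TL^\infty_x}\bigr)\lesssim H_0\delta$, which is exactly what the paper does (the only genuinely linear piece requiring the Strichartz machinery in $\partial_t\mathbf v$ is $\mathcal T_\mu(D)\nabla\eta$, handled as a variant of Proposition \ref{prop:boundedness_eta_v_d_2}). Your substitute — differentiating \eqref{eq:compact_eq_d_2} in time and applying \eqref{Refined:2d} to $\partial_t u^\pm$ with source $\partial_t\mathcal B^\pm$ — is only sketched and is delicate: $\partial_t\mathcal B^\pm$ contains, after substituting the equation, terms like $\nabla\cdot(\eta\,\mathcal T_\mu(D)\nabla\eta)$ and $\nabla\cdot((\nabla\cdot\mathbf v)\mathbf v)$ carrying two spatial derivatives on products, the $F$-weight in \eqref{Refined:2d} only smooths by $\abs{D}^{-1/2-\theta/3}$ while $J_\mu^{3/4}$ costs up to $3/4$ of a derivative, and the localized Strichartz argument also produces "initial data'' terms $\norm{\partial_t u^\pm(a_j)}$ involving $\sqrt{\mathcal T_\mu}(D)\abs{D}u^\pm$; none of the required derivative and $\mu$-weight bookkeeping at $s>\tfrac94$ is verified (also, $\sqrt{\mathcal T_\mu}(D)\mathbf R$ has a symbol of order $0$, not order $1$ as you state). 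So as written the proposal replaces the step that actually closes the argument with one that is not shown to close; the rest of the proof is sound and matches the paper.
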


\begin{proof}
    We define
    \begin{align}\label{eq:T_tilde_nu_d_2}
        \Tilde{T} := \sup \{ T \in (0,T^\ast) : \sup_{t\in [0,T]} \norm{(\eta,\mathbf{v})}_{V^s_\mu} \leq 5 c \sqrt{\frac{H_0}{h_0}} \norm{(\eta_0,\mathbf{v}_0)}_{V^s_\mu} \}
    \end{align}
    for some $c>0$.    To obtain the result, we will argue by contradiction by assuming that $\Tilde{T} \leq T_1$. 
    \\
    \indent 
    For simplicity in notation, we denote 
    \begin{align*}
        A(t) &:= \norm{(\eta,\mathbf v)(t)}_{V^s_\mu}^2,\\
        H(t) &:=  1 + \epsilon \norm{(\eta,\mathbf{v})(t)}_{L^\infty_x \times L^\infty_x},
        \intertext{and}
     h(t) &:= \inf_{x \in \R^2} \left\{ 1+ \epsilon\eta(x,t) \right\}.
\end{align*}
    We also write $A(0) := A_0$, $H(0) := H_0$, and $h(0) := h_0$, where the non-cavitation condition can then be restated as $h_0 > 0$. We fix $T_2 \in (0,\Tilde{T})$ and claim that
        \begin{equation}\label{Apriori}
       h(t) \geq \frac{h_0}{2}
        \quad \text{and} \quad
       \sup_{t\in [0,T_2]} H(t) \leq 2H_0
        %\quad
        %\text{and}
        %\quad
        %A(t) < 4 \frac{H_0}{h_0} A_0
    \end{equation}
     for all $t \in [0,T_2]$ implying that the condition \eqref{eq:assump_eta1_v1_d_2_part2} holds.   Then, using \eqref{eq:gronwall_wb}, the Cauchy-Schwarz inequality in time, and  the claim \eqref{Apriori}  yields
    \[
    A(t)
  \leq 4c_2 \frac{H_0}{h_0} A_0
  \mathrm{e}^{c H_0 \epsilon T_2^\frac{1}{2}\norm{\mathcal{P}}_{L^2(0,T_2)}} ,
\]
where $c_2>0$ is the constant in \eqref{eq:gronwall_wb} and $\mathcal{P}(t) = \norm{\nabla \eta(t)}_{L^\infty_x} +\norm{\nabla \mathbf{v}(t)}_{L^\infty_x}  + \norm{\mathcal{T}_{\mu}^{-\frac{1}{2}}(D)\nabla \mathbf{v}(t)}_{L^\infty_x}$.
 From \eqref{eq:est_eta_desired_Dhalf_d_2} and the assumption $T_2 < \Tilde{T}\leq T_1$, it follows that, if $s > \frac{9}{4}$, then
\begin{align*}
     \epsilon  T_2^\frac{1}{2} \norm{\mathcal{P}}_{L^2(0,T_2)} &\lesssim  \epsilon
  \mu^{(-\frac{1}{6})^+} T_2^{\frac{2}{3}^+}\norm{A}_{L^\infty(0,T_2)}^{\frac{1}{2}}
  + \epsilon^2  \mu^{(-\frac{1}{3})^+} T_2^{\frac{4}{3}^+} \norm{A}_{L^\infty(0,T_2)} \\
  &\lesssim  \epsilon
  \mu^{(-\frac{1}{6})^+} T_1^{\frac{2}{3}^+} \sqrt{\frac{H_0}{h_0} A_0}
  +  \epsilon^2  \mu^{(-\frac{1}{3})^+} T_1^{\frac{4}{3}^+}  \frac{H_0}{h_0} A_0\\
  &=: \delta + \delta^2,
\end{align*}
where 
\begin{align}\label{eq:delta}
    \delta = c \epsilon \mu^{(-\frac{1}{6})^+} T_1^{\frac{2}{3}^+} \sqrt{\frac{H_0}{h_0} A_0}.
\end{align}
Gathering the above, we have
\begin{equation}\label{A-est}
    A(t) \le 4c_2\frac{H_0}{h_0} A_0 \mathrm{e}^{ \delta + \delta^2}
\end{equation}
for all $t\in [0,T_2]$. We choose $T_1 > 0$  so small that
\begin{equation}\label{delta-condition}
4 c \delta \leq  \frac{4 c H_0}{h_0} \delta < \frac{\log 2}{2},
\end{equation}
that is, 
\begin{align*}
     \frac{ 5c H_0}{h_0} c \epsilon \mu^{(-\frac{1}{6})^+} T_1^{\frac{2}{3}^+} \sqrt{ \frac{H_0}{h_0} A_0} < \frac{\log 2}{2},
\end{align*}
which is the same as \eqref{eq:def_T_0_d_2}. We remark that \eqref{delta-condition} implies $\delta < 1$, so that $\delta+\delta^2 < 2\delta$. This gives us 
\begin{align*}
      A(t) \le 8c_2\frac{H_0}{h_0} A_0 
\end{align*}
for all $t\in [0,T_2]$ implying
\begin{align*}
     \norm{(\eta,\mathbf{v})(t)}_{V^s_\mu} \leq 3\sqrt{c_2\frac{H_0}{h_0}}\norm{(\eta_0,\mathbf{v}_0)}_{V^s_\mu}.
\end{align*}
Consequently, we have from the blow-up alternative \eqref{eq:BA_d2} that $T_2< T^\ast$. %where we now may redo the argument replacing $T_2$ with $\Tilde{T}$. 
From \eqref{eq:BA_d2}, it follows that the solution $(\eta,\mathbf{v})$ is continuous for $t\in [0,T^\ast)$. Thus, there exists a time $\tau \in [\Tilde{T},T^\ast)$ such that 
    \begin{align*}
        \norm{(\eta,\mathbf{v})(\tau)}_{V^s_\mu} \leq 4  \sqrt{c_2\frac{H_0}{h_0}}\norm{(\eta_0,\mathbf{v}_0)}_{V^s_\mu},
    \end{align*}
    which contradicts with the definition of $\Tilde{T}$ \eqref{eq:T_tilde_nu_d_2}. Hence, $T_1 < \Tilde{T}$, where $T_1$ is as seen in \eqref{eq:def_T_0_d_2}.
    \subsubsection*{\underline{Proof of the claim \eqref{Apriori}}}
    We start by verifying the last inequality of \eqref{Apriori}. From the fundamental theorem of calculus  and H\"{o}lder's inequality in time, we have that
    \begin{align*}
      H(t)  =  1+ \epsilon\norm{(\eta,\mathbf{v})(t)}_{L^\infty_x \times L^\infty_x} &\leq  H_0 +  \epsilon T_2^\frac{1}{2} \Big(
        \norm{\partial_t\eta}_{L^2_{T_2} L^\infty_x} + 
        \norm{\partial_t\mathbf{v}}_{L^2_{T_2} L^\infty_x}\Big). 
    \end{align*}
      From \eqref{eq:compact_eq_d_2}, Proposition \ref{prop:boundedness_eta_v_d_2}, and $T_2<\Tilde{T}$, we obtain
      \begin{align*}
          \epsilon T_2^\frac{1}{2} \norm{\partial_t\eta}_{L^2_{T_2} L^\infty_x} &\leq \epsilon T_2^\frac{1}{2}  \Big(\norm{\nabla \cdot \mathbf{v}}_{L^2_{T_2} L^\infty_x} + \epsilon  \norm{\nabla \eta}_{L^2_{T_2} L^\infty_x}\norm{\mathbf{v}}_{L^\infty_{T_2} L^\infty_x}
      + \epsilon   \norm{\nabla \cdot \mathbf{v}}_{L^2_{T_2} L^\infty_x}\norm{\eta}_{L^\infty_{T_2} L^\infty_x}\Big) \\
      &\leq \epsilon T_2^\frac{1}{2} \norm{H}_{L^\infty(0,T_2)}\norm{\mathcal{P}}_{L^2(0,T_2)}\\
      &\leq 2\norm{H}_{L^\infty(0,T_2)} \delta
      \end{align*}
       for $s>\frac{9}{4}$,    where $\delta$ is defined in \eqref{eq:delta}. Next, \eqref{eq:compact_eq_d_2} and 
    \begin{align*}
        (\mathbf{v} \cdot \nabla)\mathbf{v} = (v_1\partial_{x_1} v_1 + v_2 \partial_{x_2}v_1, v_1\partial_{x_1} v_2 + v_2 \partial_{x_2}v_2)^\mathrm{T}
    \end{align*}
    yields
    \begin{align*}
        \epsilon T_2^\frac{1}{2} \norm{\partial_t\mathbf{v}}_{L^2_{T_2} L^\infty_x} &\leq \norm{\mathcal{T}_\mu(D)\nabla \eta}_{L^2_{T_2} L^\infty_x} +  \epsilon \norm{(\mathbf{v}\cdot\nabla) \mathbf{v}}_{L^2_{T_2} L^\infty_x}
        \\
        &\leq  \norm{H}_{L^\infty(0,T_2)}\Big( \norm{\mathcal{T}_\mu(D)\nabla \eta}_{L^2_{T_2} L^\infty_x} +  \norm{\nabla \mathbf{v}}_{L^2_{T_2} L^\infty_x}\Big).
    \end{align*}
    The first term can be estimated by slightly modifying the proof of $\norm{\nabla \eta}_{L^2_{T_2} L^\infty_x}$ in Proposition \ref{prop:boundedness_eta_v_d_2}, where one has to utilize \eqref{eq:est_on_tilbert}. Then from Proposition \ref{prop:boundedness_eta_v_d_2}, we obtain
    \begin{align*}
       \epsilon T_2^\frac{1}{2} \norm{\partial_t\mathbf{v}}_{L^2_{T_2} L^\infty_x}  &\leq   2\norm{H}_{L^\infty(0,T_2)} \delta
    \end{align*} 
     for $s>\frac{9}{4}$.    Gathering the above calculations gives us
    \begin{align*}
        (1-4\delta) \norm{H}_{L^\infty(0,T_2)} \leq H_0,
    \end{align*}
    which requires
        \begin{align*}
         4  \delta < \frac12,
     \end{align*}
     but this is satisfied due to \eqref{delta-condition}.
     \\
    \indent 
     Lastly, we will verify that $\eta$ satisfies the non-cavitation condition. From the fundamental theorem of calculus, the non-cavitation condition on $\eta_0$ \eqref{eq:noncav}, and H\"{o}lder's inequality, we obtain
    \begin{align*}%\label{eq:noncav_for_eta_nu_d_2}
        1+ \epsilon\eta(x,t) = 1 + \epsilon \eta_0+ \epsilon \int_0^t \partial_t\eta(x,s) \d s  \geq  h_0 - \epsilon T_2^{\frac{1}{2}} \norm{\partial_t\eta}_{L^2_{T_2} L^\infty_x}
    \end{align*}
    for all $(x,t)\in \R^2 \times [0,T_2]$. It is enough to prove that
    \begin{align*}
       \epsilon T_2^{\frac{1}{2}} \norm{\partial_t\eta}_{L^2_{T_2} L^\infty_x}  \leq \frac{h_0}{2}.
    \end{align*}
    From the first equation in \eqref{eq:whitham_boussinesq_d2}, H\"{o}lder's inequality, the second inequality in \eqref{Apriori} (verified above), Proposition \ref{prop:boundedness_eta_v_d_2}, and the assumption $T_2<\Tilde{T}\leq T_1$, we obtain
    \begin{align*}%\label{eq:est_partial_t_eta_nu_d_2}
        \epsilon T_2^{\frac{1}{2}}\norm{\partial_t\eta}_{L^2_{T_2} L^\infty_x} &\leq \norm{\nabla \cdot \mathbf{v}}_{L^2_{T_2} L^\infty_x} + \epsilon \norm{\nabla \eta}_{L^2_{T_2} L^\infty_x}\norm{\mathbf{v}}_{L^\infty_{T_2} L^\infty_x}
      + \epsilon \norm{\nabla \cdot \mathbf{v}}_{L^2_{T_2} L^\infty_x}\norm{\eta}_{L^\infty_{T_2} L^\infty_x} \\ \notag
        &\leq 2H_0  \epsilon T_2^{\frac{1}{2}}\norm{\mathcal{P}}_{L^2(0,T_2)} \\ \notag
        &\leq 4cH_0 \delta
    \end{align*}
     for $s>\frac{9}{4}$. So, we need 
     \begin{align*}
         4c \frac{H_0}{h_0} \delta < \frac12
     \end{align*}
     to conclude, which holds by \eqref{delta-condition}.
     
    \renewcommand\qedsymbol{$\overset{\text{Lemma \ref{lem:bootstrap_wb}}}{\ensuremath{\Box}}$}
\end{proof}

\subsubsection*{Conclusion.}   %It remains to prove the continuous dependence of the initial data, where we may use the frequency envelope method (see, e.g., \cite[Theorem 1]{Alazard2024}). 
It remains to comment on the uniqueness, continuous dependence of the initial data, and the persistence of higher regularity. All these properties are known for some positive time by the local well-posedness result, Theorem \ref{thm:paulsen}, and they therefore extend to the larger time interval $[0,T_1]$ by standard arguments (see Remarks \ref{rem:w_rem1}--\ref{rem:w_rem3} for more details). This concludes the proof of the theorem for $d=2$.

\end{proof}

\section*{Acknowledgements}
\noindent
NST was supported by the VISTA program, the Norwegian Academy of Science and Letters, and Equinor. AT acknowledges the support of FDCRGP 2025-2027 (Ref. 040225FD4730),  Nazarbayev University: Probabilistic methods in the study of nonlinear dispersive and wave equations.  DP
gratefully acknowledges Jean-Claude Saut for raising, several years ago, the question of whether
dispersion could enhance the existence time of certain versions of Boussinesq-type systems in the
long wave regime.

\bibliographystyle{acm}

\bibliography{dispersive}

\begin{thebibliography}{10}

\bibitem{Alazard2015}
{\sc Alazard, T., and Delort, J.-M.}
\newblock Global solutions and asymptotic behavior for two dimensional gravity
  water waves.
\newblock {\em Ann. Sci. \'Ec. Norm. Sup\'er. (4) 48}, 5 (2015), 1149--1238.

\bibitem{Alvarez2008}
{\sc Alvarez-Samaniego, B., and Lannes, D.}
\newblock Large time existence for 3{D} water-waves and asymptotics.
\newblock {\em Invent. Math. 171}, 3 (2008), 485--541.

\bibitem{Benjamin1972}
{\sc Benjamin, T.~B., Bona, J.~L., and Mahony, J.~J.}
\newblock Model equations for long waves in nonlinear dispersive systems.
\newblock {\em Philos. Trans. Roy. Soc. London Ser. A 272}, 1220 (1972),
  47--78.

\bibitem{Bona2002}
{\sc Bona, J.~L., Chen, M., and Saut, J.-C.}
\newblock Boussinesq equations and other systems for small-amplitude long waves
  in nonlinear dispersive media. {I}. {D}erivation and linear theory.
\newblock {\em J. Nonlinear Sci. 12}, 4 (2002), 283--318.

\bibitem{Bona2005}
{\sc Bona, J.~L., Colin, T., and Lannes, D.}
\newblock Long wave approximations for water waves.
\newblock {\em Arch. Ration. Mech. Anal. 178}, 3 (2005), 373--410.

\bibitem{Bona1975}
{\sc Bona, J.~L., and Smith, R.}
\newblock The initial-value problem for the {K}orteweg-de {V}ries equation.
\newblock {\em Philos. Trans. Roy. Soc. London Ser. A 278}, 1287 (1975),
  555--601.

\bibitem{Boussinesq1872}
{\sc Boussinesq, J.}
\newblock Th\'eorie des ondes et des remous qui se propagent le long d'un canal
  rectangulaire horizontal, en communiquant au liquide contenu dans ce canal
  des vitesses sensiblement pareilles de la surface au fond.
\newblock {\em J. Math. Pures Appl. (2) 17\/} (1872), 55--108.

\bibitem{Burq2004}
{\sc Burq, N., G\'erard, P., and Tzvetkov, N.}
\newblock Strichartz inequalities and the nonlinear {S}chr\"odinger equation on
  compact manifolds.
\newblock {\em Amer. J. Math. 126}, 3 (2004), 569--605.

\bibitem{Burtea2016}
{\sc Burtea, C.}
\newblock New long time existence results for a class of {B}oussinesq-type
  systems.
\newblock {\em J. Math. Pures Appl. (9) 106}, 2 (2016), 203--236.

\bibitem{Dinvay2019}
{\sc Dinvay, E.}
\newblock On well-posedness of a dispersive system of the
  {W}hitham-{B}oussinesq type.
\newblock {\em Appl. Math. Lett. 88\/} (2019), 13--20.

\bibitem{Dinvay2020_alone}
{\sc Dinvay, E.}
\newblock Well-posedness for a {W}hitham-{B}oussinesq system with surface
  tension.
\newblock {\em Math. Phys. Anal. Geom. 23}, 2 (2020), Paper No. 23, 27.

\bibitem{Dinvay2019_model}
{\sc Dinvay, E., Dutykh, D., and Kalisch, H.}
\newblock A comparative study of bi-directional {W}hitham systems.
\newblock {\em Appl. Numer. Math. 141\/} (2019), 248--262.

\bibitem{Dinvay2020}
{\sc Dinvay, E., Selberg, S., and Tesfahun, A.}
\newblock Well-posedness for a dispersive system of the {W}hitham-{B}oussinesq
  type.
\newblock {\em SIAM J. Math. Anal. 52}, 3 (2020), 2353--2382.

\bibitem{Duchene2021}
{\sc Duch\^{e}ne, V.}
\newblock {\em {Many Models for Water Waves}}.
\newblock Accreditation to supervise research, {Universit{\'e} de Rennes 1},
  July 2021.

\bibitem{Ehrnstrom_Wahlen2019}
{\sc Ehrnstr\"om, M., and Wahl\'en, E.}
\newblock On {W}hitham's conjecture of a highest cusped wave for a nonlocal
  dispersive equation.
\newblock {\em Ann. Inst. H. Poincar\'e{} C Anal. Non Lin\'eaire 36}, 6 (2019),
  1603--1637.

\bibitem{Ehrnstrom2019}
{\sc Ehrnstr\"om, M., and Wang, Y.}
\newblock Enhanced existence time of solutions to the fractional {K}orteweg--de
  {V}ries equation.
\newblock {\em SIAM J. Math. Anal. 51}, 4 (2019), 3298--3323.

\bibitem{Ehrnstrom2022}
{\sc Ehrnstr\"{o}m, M., and Wang, Y.}
\newblock Enhanced existence time of solutions to evolution equations of
  {W}hitham type.
\newblock {\em Discrete Contin. Dyn. Syst. 42}, 8 (2022), 3841--3860.

\bibitem{Emerald2021_siam}
{\sc Emerald, L.}
\newblock Rigorous derivation from the water waves equations of some full
  dispersion shallow water models.
\newblock {\em SIAM J. Math. Anal. 53}, 4 (2021), 3772--3800.

\bibitem{Emerald2021}
{\sc Emerald, L.}
\newblock Rigorous derivation of the {W}hitham equations from the water waves
  equations in the shallow water regime.
\newblock {\em Nonlinearity 34}, 11 (2021), 7470--7509.

\bibitem{Emerald2022}
{\sc Emerald, L.}
\newblock Local well-posedness result for a class of non-local quasi-linear
  systems and its application to the justification of whitham-boussinesq
  systems, 2022.

\bibitem{Hunter2012}
{\sc Hunter, J.~K., and Ifrim, M.}
\newblock Enhanced life span of smooth solutions of a {B}urgers-{H}ilbert
  equation.
\newblock {\em SIAM J. Math. Anal. 44}, 3 (2012), 2039--2052.

\bibitem{Hunter2016}
{\sc Hunter, J.~K., Ifrim, M., and Tataru, D.}
\newblock Two dimensional water waves in holomorphic coordinates.
\newblock {\em Comm. Math. Phys. 346}, 2 (2016), 483--552.

\bibitem{Hunter2015}
{\sc Hunter, J.~K., Ifrim, M., Tataru, D., and Wong, T.~K.}
\newblock Long time solutions for a {B}urgers-{H}ilbert equation via a modified
  energy method.
\newblock {\em Proc. Amer. Math. Soc. 143}, 8 (2015), 3407--3412.

\bibitem{Hur2017}
{\sc Hur, V.~M.}
\newblock Wave breaking in the {W}hitham equation.
\newblock {\em Adv. Math. 317\/} (2017), 410--437.

\bibitem{Hur2015}
{\sc Hur, V.~M., and Johnson, M.~A.}
\newblock Modulational instability in the {W}hitham equation for water waves.
\newblock {\em Stud. Appl. Math. 134}, 1 (2015), 120--143.

\bibitem{Hur2019}
{\sc Hur, V.~M., and Pandey, A.~K.}
\newblock Modulational instability in a full-dispersion shallow water model.
\newblock {\em Stud. Appl. Math. 142}, 1 (2019), 3--47.

\bibitem{Ifrim2019}
{\sc Ifrim, M., and Tataru, D.}
\newblock Well-posedness and dispersive decay of small data solutions for the
  {B}enjamin-{O}no equation.
\newblock {\em Ann. Sci. \'Ec. Norm. Sup\'er. (4) 52}, 2 (2019), 297--335.

\bibitem{Iorio2001}
{\sc Iorio, Jr., R.~J., and Iorio, V. d.~M.}
\newblock {\em Fourier analysis and partial differential equations}, vol.~70 of
  {\em Cambridge Studies in Advanced Mathematics}.
\newblock Cambridge University Press, Cambridge, 2001.

\bibitem{Kalisch2019}
{\sc Kalisch, H., and Pilod, D.}
\newblock On the local well-posedness for a full-dispersion {B}oussinesq system
  with surface tension.
\newblock {\em Proc. Amer. Math. Soc. 147}, 6 (2019), 2545--2559.

\bibitem{Kato1988}
{\sc Kato, T., and Ponce, G.}
\newblock Commutator estimates and the {E}uler and {N}avier-{S}tokes equations.
\newblock {\em Comm. Pure Appl. Math. 41}, 7 (1988), 891--907.

\bibitem{Kenig2003}
{\sc Kenig, C.~E., and Koenig, K.~D.}
\newblock On the local well-posedness of the {B}enjamin-{O}no and modified
  {B}enjamin-{O}no equations.
\newblock {\em Math. Res. Lett. 10}, 5-6 (2003), 879--895.

\bibitem{Klein2018}
{\sc Klein, C., Linares, F., Pilod, D., and Saut, J.-C.}
\newblock On {W}hitham and related equations.
\newblock {\em Stud. Appl. Math. 140}, 2 (2018), 133--177.

\bibitem{Koch2003}
{\sc Koch, H., and Tzvetkov, N.}
\newblock On the local well-posedness of the {B}enjamin-{O}no equation in
  {$H^s({\Bbb R})$}.
\newblock {\em Int. Math. Res. Not.}, 26 (2003), 1449--1464.

\bibitem{Korteweg1895}
{\sc Korteweg, D.~J., and de~Vries, G.}
\newblock On the change of form of long waves advancing in a rectangular canal,
  and on a new type of long stationary waves.
\newblock {\em Philos. Mag. (5) 39}, 240 (1895), 422--443.

\bibitem{Lannes2006}
{\sc Lannes, D.}
\newblock Sharp estimates for pseudo-differential operators with symbols of
  limited smoothness and commutators.
\newblock {\em J. Funct. Anal. 232}, 2 (2006), 495--539.

\bibitem{Lannes2013}
{\sc Lannes, D.}
\newblock {\em The water waves problem}, vol.~188 of {\em Mathematical Surveys
  and Monographs}.
\newblock American Mathematical Society, Providence, RI, 2013.
\newblock Mathematical analysis and asymptotics.

\bibitem{Linares2012}
{\sc Linares, F., Pilod, D., and Saut, J.-C.}
\newblock Well-posedness of strongly dispersive two-dimensional surface wave
  {B}oussinesq systems.
\newblock {\em SIAM J. Math. Anal. 44}, 6 (2012), 4195--4221.

\bibitem{Linares2014}
{\sc Linares, F., Pilod, D., and Saut, J.-C.}
\newblock Dispersive perturbations of {B}urgers and hyperbolic equations {I}:
  {L}ocal theory.
\newblock {\em SIAM J. Math. Anal. 46}, 2 (2014), 1505--1537.

\bibitem{Melinand2024}
{\sc Melinand, B.}
\newblock Dispersive estimates for nonhomogeneous radial phases: an application
  to weakly dispersive equations and water wave models.
\newblock {\em J. Funct. Anal. 286}, 1 (2024), Paper No. 110204, 44.

\bibitem{Melinand2024b}
{\sc Melinand, B.}
\newblock Rigid lid limit in shallow water over a flat bottom.
\newblock {\em Stud. Appl. Math. 153}, 4 (2024), Paper No. e12773, 35.

\bibitem{Ming2012}
{\sc Ming, M., Saut, J.~C., and Zhang, P.}
\newblock Long-time existence of solutions to {B}oussinesq systems.
\newblock {\em SIAM J. Math. Anal. 44}, 6 (2012), 4078--4100.

\bibitem{Paulsen2022}
{\sc Paulsen, M.~O.}
\newblock Long time well-posedness of {W}hitham-{B}oussinesq systems.
\newblock {\em Nonlinearity 35}, 12 (2022), 6284--6348.

\bibitem{Pilod2021}
{\sc Pilod, D., Saut, J.-C., Selberg, S., and Tesfahun, A.}
\newblock Dispersive estimates for full dispersion {KP} equations.
\newblock {\em J. Math. Fluid Mech. 23}, 1 (2021), Paper No. 25, 24.

\bibitem{Saut1979}
{\sc Saut, J.-C.}
\newblock Sur quelques g\'en\'eralisations de l'\'equation de {K}orteweg-de
  {V}ries.
\newblock {\em J. Math. Pures Appl. (9) 58}, 1 (1979), 21--61.

\bibitem{Saut2017}
{\sc Saut, J.-C., Wang, C., and Xu, L.}
\newblock The {C}auchy problem on large time for surface-waves-type
  {B}oussinesq systems {II}.
\newblock {\em SIAM J. Math. Anal. 49}, 4 (2017), 2321--2386.

\bibitem{Saut2022}
{\sc Saut, J.-C., and Wang, Y.}
\newblock The wave breaking for {W}hitham-type equations revisited.
\newblock {\em SIAM J. Math. Anal. 54}, 2 (2022), 2295--2319.

\bibitem{Saut2012}
{\sc Saut, J.-C., and Xu, L.}
\newblock The {C}auchy problem on large time for surface waves {B}oussinesq
  systems.
\newblock {\em J. Math. Pures Appl. (9) 97}, 6 (2012), 635--662.

\bibitem{Saut2020}
{\sc Saut, J.-C., and Xu, L.}
\newblock Long time existence for a strongly dispersive {B}oussinesq system.
\newblock {\em SIAM J. Math. Anal. 52}, 3 (2020), 2803--2848.

\bibitem{Saut2020_JDE}
{\sc Saut, J.-C., and Xu, L.}
\newblock Long time existence for the {B}oussinesq-full dispersion systems.
\newblock {\em J. Differential Equations 269}, 3 (2020), 2627--2663.

\bibitem{Shatah1985}
{\sc Shatah, J.}
\newblock Normal forms and quadratic nonlinear {K}lein-{G}ordon equations.
\newblock {\em Comm. Pure Appl. Math. 38}, 5 (1985), 685--696.

\bibitem{Sideris1997}
{\sc Sideris, T.~C.}
\newblock Delayed singularity formation in {$2$}d compressible flow.
\newblock {\em Amer. J. Math. 119}, 2 (1997), 371--422.

\bibitem{Tataru2000}
{\sc Tataru, D.}
\newblock Strichartz estimates for operators with nonsmooth coefficients and
  the nonlinear wave equation.
\newblock {\em Amer. J. Math. 122}, 2 (2000), 349--376.

\bibitem{Tesfahun2024}
{\sc Tesfahun, A.}
\newblock Long-time existence for a {W}hitham-{B}oussinesq system in two
  dimensions.
\newblock {\em Commun. Contemp. Math. 26}, 1 (2024), Paper No. 2250065, 21.

\bibitem{Truong2022}
{\sc Truong, T., Wahl\'en, E., and Wheeler, M.~H.}
\newblock Global bifurcation of solitary waves for the {W}hitham equation.
\newblock {\em Math. Ann. 383}, 3-4 (2022), 1521--1565.

\bibitem{Wang2021}
{\sc Wang, Y.}
\newblock Well-posedness to the {C}auchy problem of a fully dispersive
  {B}oussinesq system.
\newblock {\em J. Dynam. Differential Equations 33}, 2 (2021), 805--816.

\bibitem{Whitham1967}
{\sc Whitham, G.~B.}
\newblock Variational methods and applications to water waves.
\newblock {\em Proc. Roy. Soc. London Ser. A 299}, 1456 (1967), 6--25.

\end{thebibliography}

\end{document}